\journal{Applied Mathematics and Computation}
\newtheorem{theorem}{Theorem}[section]
\newtheorem{lemma}[theorem]{Lemma}
\newtheorem{proposition}[theorem]{Proposition}
\newtheorem{corollary}[theorem]{Corollary}
\newtheorem{conjecture}{Conjecture}[section]
\newtheorem{definition}{Definition}[section]
\newtheorem{remark}{\textbf{Remarks}}
\newtheorem{example}{Example}
\newcommand{\E}{{\rm I}\kern-0.18em{\rm E}}
\newcommand{\Q}{\ensuremath{\mathbb{Q}}}
\newcommand{\R}{\ensuremath{\mathbb{R}}}
\newcommand{\C}{\ensuremath{\mathbb{C}}}
\newcommand{\ud}{\ensuremath{\mathrm{d}}}
\newcommand{\uinner}[2]{\ensuremath{\langle #1 ,\; #2 \rangle}}
\newcommand{\wa}{\ensuremath{w_{0}^{*}}}
\newcommand{\wb}{\ensuremath{w^{*}}}
\newcommand{\tr}{\ensuremath{\mathrm{tr}}}
\renewcommand*\env@matrix[1][*\c@MaxMatrixCols c]{%
  \hskip -\arraycolsep
  \let\@ifnextchar\new@ifnextchar
  \array{#1}}
\begin{document}

\begin{frontmatter}

\title{Identifiability Analysis of Linear Ordinary Differential Equation Systems with a Single Trajectory\tnoteref{mytitlenote}}
\tnotetext[mytitlenote]{This work was funded in part by the National Institutes of Health under award number R01 AI087135 and the University of Rochester CTSA award number UL1 TR002001 from the National Center for Advancing Translational Sciences of the National Institutes of Health.}

\author[mymainaddress]{Xing Qiu}\ead{xing_qiu@urmc.rochester.edu}
\author[mysecondaryaddress]{Tao Xu}\ead{tao.xu@uth.tmc.edu}
\author[mysecondaryaddress]{Babak Soltanalizadeh}\ead{babak.soltanalizadeh@uth.tmc.edu}
\author[mysecondaryaddress]{Hulin Wu\corref{mycorrespondingauthor}}
\cortext[mycorrespondingauthor]{Corresponding author}
\ead{hulin.wu@uth.tmc.edu}

\address[mymainaddress]{Department of Biostatistics and Computational Biology \\
  University of Rochester, Rochester, NY, U.S.A.}
\address[mysecondaryaddress]{Department of Biostatistics \& Data Science, School of Public Health \\
  University of Texas Health Science Center at Houston, Houston, TX, U.S.A.}

\begin{abstract}
  Ordinary differential equations (ODEs) are widely used to model dynamical behavior of systems. It is important to perform identifiability analysis prior to estimating unknown parameters in ODEs (a.k.a. inverse problem), because if a system is unidentifiable, the estimation procedure may fail or produce erroneous and misleading results.

  Although several qualitative identifiability measures have been proposed, much less effort has been given to developing \emph{quantitative} (continuous) scores that are robust to uncertainties in the data, especially for those cases in which the data are presented as a single trajectory beginning with one initial value. 

  In this paper, we first derived a closed-form representation of linear ODE systems that are not identifiable based on a single trajectory. This representation helps researchers design practical systems and choose the right prior structural information in practice. Next, we proposed several quantitative scores for identifiability analysis in practice. In simulation studies, the proposed measures outperformed the main competing method significantly, especially when noise was presented in the data. We also discussed the asymptotic properties of practical identifiability for high-dimensional ODE systems and conclude that, without additional prior information, many random ODE systems are practically unidentifiable when the dimension approaches infinity.
\end{abstract}

\begin{keyword}
linear ordinary differential equations \sep structural identifiability \sep 
practical identifiability \sep inverse problem \sep parameter estimation
\end{keyword}

\end{frontmatter}


\section{Background and Introduction}


Ordinary differential equations (ODE) can be used to model complex dynamic systems in a wide variety of disciplines including economics, physics, engineering, chemistry, and biology~\cite{butcher2014ordinary,commenges2011inference,DeJong2002,hemker1972numerical,holter2001dynamic,huang2006hierarchical,lavielle2011maximum,li2011large,Lu2011,Ramsay2007Parameter}. Such systems are usually represented as
\begin{gather}
  \label{eq:general-state-eqn}
  \begin{cases}
    D\mathbf{x}(t) = f(\mathbf{x}(t), \mathbf{u}(t), \theta),& t\in (0, T), \\
    \mathbf{x}(0) = \mathbf{x}_{0}.
  \end{cases} \\
  \label{eq:general-observation-eqn}
  \mathbf{y}(t) = h(\mathbf{x}(t), \mathbf{u}(t), \theta).
\end{gather}
Here $\mathbf{x}(t) = (x_{1}(t), \dots, x_{d}(t))'\in \R^{d}$ is the state vector, $D = \frac{\ud }{\ud t}$ is the first order differential operator\footnote{To avoid confusion, we reserve symbol $'$ (apostrophe) for matrix transpose, not the derivative with respect to $t$.},  $\mathbf{y}(t) \in R^{d}$ is the output vector, $\mathbf{u}(t)$ is a known system input vector, $f$, $g$ are known families of linear or nonlinear functions indexed by $\theta \in \R^{p}$, which is the vector of unknown parameters to be estimated.  Equation \eqref{eq:general-state-eqn} is called the state equation and Equation~\eqref{eq:general-observation-eqn} the output or observation equation. In this paper, we focus on an important special case of the above general ODE system: homogeneous linear ODE system with complete observation
\begin{equation}
  \label{eq:lin-ode}
  D\mathbf{x}(t) = A \mathbf{x}(t), \qquad \mathbf{x}(0) = \mathbf{x}_{0} = (x_{1,0}, \dots, x_{d,0})', \qquad \mathbf{y}(t) = \mathbf{x}(t).
\end{equation}
Here $A \in M_{d\times d}$ is a matrix (called the system matrix) that characterizes the mechanistic relationship between $x_{i}(t)$; $\mathbf{y}(t)=\mathbf{x}(t)$ means that we can directly observe $\mathbf{x}(t)$ in all dimensions.

Let $\mathbf{x}(t|A, \mathbf{x}_{0})$ be the solution curve (a.k.a. the trajectories) of Equation~\eqref{eq:lin-ode} initiated at $\mathbf{x}_{0}$ and governed by system matrix $A$. It is well known that $\mathbf{x}(t|A, \mathbf{x}_{0})$ can be represented as a unique matrix exponential
\begin{equation}
  \label{eq:matrix-exponentials}
  \mathbf{x}(t|A, \mathbf{x}_{0}) = e^{tA} \mathbf{x}_{0}.
\end{equation}

As such, the \emph{forward problem} of Equation~\eqref{eq:lin-ode}, defined as solving the ODE system with given $A$ and $\mathbf{x}_{0}$, has been resolved in the mathematical sense -- despite of several known numerical issues in matrix exponentials for high-dimensional data~\cite{moler2003nineteen}.

In practical applications, the parameters that characterize the ODE system, such as $A$ and $\mathbf{x}_{0}$ in Equation~\eqref{eq:lin-ode}, must be estimated from the real data. This is known as the \emph{inverse problem}. Over the years, many parameter estimation methods have been developed for ODE systems~\cite{commenges2011inference,huang2006hierarchical,huang2006bayesian,huang2010hierarchical,lavielle2011maximum,li2005parameter,putter2002bayesian,wu2019parameter,Xue2010}.

In principle, before performing the parameter estimation, we need to address an important question: are the parameters in a particular ODE model \emph{identifiable} from the data? In this context, ``identifiability'' loosely means that there is a \emph{unique} mapping between the trajectories and the parameters of a family of ODE systems.

By now, a rich literature on the identifiability of both linear and nonlinear ODE systems is available, see~\cite{miao2011identifiability} for a thorough review of these methods. Unfortunately, most of them only consider the identifiability of the system matrix ($A$), and assume that one can choose an arbitrary initial condition $\mathbf{x}_{0} \in \R^{d}$. For example, the \emph{global identifiability} used in some literature on nonlinear ODE identifiability (e.g., \cite{thowsen1978identifiability}) reduces to the following definition for Equation~\eqref{eq:lin-ode}, as pointed out in~\cite{stanhope2014identifiability}:
\begin{definition}
  Linear ODE system \eqref{eq:lin-ode} is globally identifiable in a subset $\Omega \subset M_{d\times d}$ iff for all $A,B \in \Omega$, $A\ne B$, there exists $\mathbf{x}_{0} \in \R^{d}$, such that $\mathbf{x}(t|A, \mathbf{x}_{0}) \ne \mathbf{x}(t|B, \mathbf{x}_{0})$.
\end{definition}

However, such definition is of little use for linear ODE systems because Stanhope and colleagues proved in \cite{stanhope2014identifiability} that, due to the linearity of Model~\eqref{eq:lin-ode}, such ODE system is \emph{always globally identifiable} in the entire parameter space $M_{d\times d}$. Consequently, no computationally intensive symbolic computation on global identifiability is needed for linear ODE systems. The above definition of identifiability is also impractical because in many real world applications, data are only available in the form of \textbf{one trajectory} starting with a single $\mathbf{x}_{0}$. For example, influenza infection affects the state of transcriptome of a patient, which can be modeled by Equation~\eqref{eq:lin-ode} (\cite{qiu2015diversity,sun2016controllability,wu2013high,wu2014modeling}). However, it is currently impossible for a researcher to select an arbitrary $\mathbf{x}_{0}$ even in an animal study, because not only we do not have the technology to  alter whole transcriptome globally, but also not all transcriptome states are biologically feasible. Furthermore, we cannot repeat the same $\mathbf{x}_{0}$ for a subject either, because the infection can have long-lasting effects to the immune system of that subject~\cite{mccullers2010influenza}.

As a response to this weakness, several researchers developed a concept known as locally strong identifiability~\cite{tunali1987new} or $\mathbf{x}_{0}$-identifiability~\cite{jeffrey2005identifiability}, that involves data with only one trajectory. For Equation~\eqref{eq:lin-ode}, it can be stated as follows.
\begin{definition}\label{def:x0-ident}
  Equation~\eqref{eq:lin-ode} is
  $\mathbf{x}_{0}$-identifiable w.r.t. a given $\mathbf{x}_{0}$ iff
  there exists an open and dense subset
  $\Omega \subset M_{d\times d}$, such that for all
  $A, B \in \Omega$, $A\ne B$, we have
  $\mathbf{x}(t|A, \mathbf{x}_{0}) \ne \mathbf{x}(t|B,
  \mathbf{x}_{0})$ on $(0,\delta t)$, for some $0 < \delta t < T$.
\end{definition}

Of note, the following natural extension to
$\mathbf{x}_{0}$-identifiability was proposed in~\cite{jeffrey2005identifiability}:
\begin{definition}\label{def:struct-ident}
  System ~\eqref{eq:lin-ode} is structurally identifiable iff there
  exist open and dense subsets $\Omega \subset M_{d\times d}$,
  $M^{0}\in \R^{d}$, such that for all $A,B \in \Omega$, $A\ne B$
  and all $\mathbf{x}_{0} \in M^{0}$, we have
  $\mathbf{x}(t|A, \mathbf{x}_{0}) \ne \mathbf{x}(t|B,
  \mathbf{x}_{0})$ on $(0,\delta t)$, for some $0 < \delta t < T$.
\end{definition}

In other words, Definition \ref{def:struct-ident} is $\mathbf{x}_{0}$-identifiability that applies to not one $\mathbf{x}_{0}$, but an open and dense set $M^{0}\in \R^{d}$. This definition is consistent with the
\emph{structural identifiability}~\cite{xia2003identifiability} and \emph{geometrical identifiability}~\cite{tunali1987new} for nonlinear ODE systems.

As a remark, the \emph{open and dense} condition of $\Omega$ was designed to rule out a set of certain ``inconvenient'' parameters that has zero-measure. For example, it can be shown that if $A$ has repeated eigenvalues, it is not identifiable with a class of other system matrices (see Section~\ref{sec:repeated-eigenvalues} and example~\ref{example:I2} in Supplementary Text for more details). A workaround is to simply define $\Omega$ to be those matrices with no repeated eigenvalues, which is clearly a dense open set in $M_{d\times d}$.  However, it is theoretically possible that the said open and dense set $\Omega$ may be ``small'' compared with $M_{d\times d}$ in terms of a measure such as $\lambda_{d\times d}$, the Lebesgue measure. See Example \ref{exp:open-dense-small} in Supplementary Text, Section~\ref{sec:additional-examples} for such an example. This can be seen as a weakness because in most real world applications, there is uncertainty in $A$, so we want to ensure that the identifiability applies to \emph{almost every} $A \in M_{d\times d}$, not just a dense set with small measure or probability. As a concrete example, $A$ may be modeled as $M + E$, where $M$ is a deterministic matrix and $E$ a perturbation term sampled from a random matrix distribution such as the real Ginibre ensemble~(GinOE, \cite{ginibre1965statistical}). By definition, if $E \sim \mathrm{GinOE}$, $A_{ij}$ are $i.i.d.$ standard normal random variables, therefore the probability measure associated with GinOE and the Lebesgue measure are absolutely continuous with each other. Therefore, the condition that almost every $A$ is identifiable is equivalent to requiring $\lambda_{d\times d}\left( \Omega^{c} \right) = 0$, where $\Omega^{c} := M_{d\times d}\setminus \Omega$ is the complement of $\Omega$, the collection of all identifiable $A$.

To the best of our knowledge, the most systematic study of linear ODE identifiability from a single observed trajectory is provided in~\cite{stanhope2014identifiability}. In this seminal work, Stanhope and colleagues derived several necessary and sufficient conditions of identifiability that applies to a single trajectory. Specifically, they proposed two concepts, called ``identifiability for a single trajectory'' and ``unconditional identifiability'', defined as follows.

\begin{definition}[identifiability for a single trajectory]
  \label{def:as-x0-ident}
  System~\eqref{eq:lin-ode} is identifiable for a single trajectory in
  $\Omega \subset M_{d\times d}$ and a given $\mathbf{x}_{0}$ iff for
  all $A, B \in \Omega$, $A\ne B$, we have
  $\mathbf{x}(t|A, \mathbf{x}_{0}) \ne \mathbf{x}(t|B,
  \mathbf{x}_{0})$, for some $0 < t < T$.
\end{definition}

\begin{definition}[unconditional identifiability]
  \label{def:uncond-ident}
  System~\eqref{eq:lin-ode} is unconditionally identifiable in
  $\Omega \in M_{d\times d}$ iff for all $A, B \in \Omega$, $A\ne B$ implies that for each nonzero $\mathbf{x}_{0} \in \R^{d}$, $\mathbf{x}(t|A, \mathbf{x}_{0}) \ne \mathbf{x}(t|B,
  \mathbf{x}_{0})$, for some $0 < t < T$.
\end{definition}

Between these two definitions, Definition~\ref{def:uncond-ident} adheres more to the traditional definition of structural identifiability for nonlinear ODE systems. Roughly speaking, it means that System~\eqref{eq:lin-ode} is identifiable from a single trajectory initiated from \emph{every} $\mathbf{x}_{0} \in \R^{d}$. Unfortunately, it is of little practical use for linear ODE system because \textbf{no system} satisfies this condition for an unconstrained parameter estimation problem, namely, $\Omega = M_{d\times d}$. In fact, we showed that (Supplementary Text, Section~\ref{sec:about-struct-ident}): (a) when the dimension $d$ is odd, unconditional identifiability is not attainable for all $\Omega \subseteq M_{d\times d}$, and (b) when $d$ is even, unconditional identifiability is not attainable for all $\Omega \subseteq M_{d\times d}$ such that $\lambda_{d\times d}\left(M_{d\times d}\setminus \Omega\right) = 0$.  In summary, a large body of prior work in identifiability analysis are geared towards nonlinear ODEs with arbitrarily many observed trajectories, which is of little utility to linear ODEs, and this issue cannot be fixed by simply removing a zero-measure set from their definitions.

One major contribution of Stanhope and colleagues is that they established a beautiful connection between the algebraic and geometric aspects of linear ODE systems in \cite[Theorem (3.4)]{stanhope2014identifiability}. We find it easier to state this important result by first define the following minimalist definition of identifiability.

\begin{definition}[$(A,\mathbf{x}_{0})$-identifiability]
  \label{def:mini-ident}
  For system~\eqref{eq:lin-ode}, we call $A$ is identifiable at
  $\mathbf{x}_{0}$ if for all $B\in M_{d\times d}$,
  $\mathbf{x}(t|A, \mathbf{x}_{0}) \ne \mathbf{x}(t|B,
  \mathbf{x}_{0})$, for some $0 < t < T$.
\end{definition}
\begin{remark}
  Definition~\ref{def:mini-ident} is not equivalent to Definition~\ref{def:as-x0-ident} applied to $\Omega := M_{d\times d}$ because in Definition~\ref{def:mini-ident}, $A$ is fixed and $B$ is an arbitrary matrix in $\Omega$, while in Definition~\ref{def:as-x0-ident}, both $A$ and $B$ are arbitrary matrices in $\Omega$.  In short, Definition~\ref{def:mini-ident} is an intrinsic property of a \emph{single} system, not a \emph{collective} property of a set of system matrices.
\end{remark}

Using this definition, \cite[Theorem (3.4)]{stanhope2014identifiability} can be restated as follows: the $(A,\mathbf{x}_{0})$-identifiability holds if and only if the solution curve $\mathbf{x}(t|A)$ is not contained in a \emph{proper invariant subspace} of $A$. Based on this powerful theoretical result, they proposed to use $\kappa(X_{1})$, the condition number of the matrix of a subset of discrete observations (see Section~\ref{sec:stanhope-cond-num} for more details), to test the identifiability for discrete data with noise in practice.

However, their study is not without shortcomings. First, they did not derive the explicit structure of the largest subset $\Omega \subseteq M_{d\times d}$ for a give $\mathbf{x}_{0}$ in identifiability analysis for a single trajectory, nor the equivalent class of all $B\in M_{d\times d}$ such that $\mathbf{x}(t|B,t) = \mathbf{x}(t|A,t)$ when the system $A$ is deemed unidentifiable at a given $\mathbf{x}_{0}$.  Secondly, while using $\kappa(X_{1})$ to check the practical identifiability of an ODE system is a clever heuristic, it has much room for improvement because: a) not all data are used in $\kappa(X_{1})$, therefore it does not utilize data efficiently; b) measurement errors are not directly reflected in this score and there is no analysis of the asymptotic properties of $\kappa(X_{1})$ from the statistical perspective; and c) by definition, $\kappa(X_{1})$ depends on the availability of data at multiple time points, so it requires solving the ODE numerically in simulation studies, which can be time consuming for high-dimensional systems and/or when a large set of systems are considered.

In this study, we first derive a closed-form representation of $(A,\mathbf{x}_{0})$-unidentifiable class, which is defined in Definition~\ref{def:unidentifiably-class} as the collection of system matrices that are not identifiable for a given pair of $A$ and $\mathbf{x}_{0}$. We also provide explicit structures of the equivalent class of unidentifiable systems due to repeated eigenvalues in $A$ in Supplementary Text, Section~\ref{sec:repeated-eigenvalues}.  We believe these results will be valuable for future studies that combine \textit{a priori} topological constraints (e.g., knowing which entries in $A$ are zero in advance) and identifiability. In light this, we give a brief discussion of the best practice of using  prior information to resolve the identifiability issues in Supplementary Text, Section~\ref{sec:prior-inform-ident}. More systematic studies in this direction warrant a future study.

Secondly, we specify explicit, computable principles of $(A,\mathbf{x}_{0})$-identifiability based on either $\mathbf{x}_{0}$ or the entire solution trajectory.  These results are presented in our Theorems~\ref{thm:structure} and \ref{thm:identifiability-based-on-pairwise-inprod}. To assist practical identifiability analyses, we propose three continuous scores: the initial condition-based identifiability score (ICIS, denoted as $\wa$ in Equation~\eqref{eq:w0star-def}), the smoothed condition number (SCN, denoted as $\tau$ in Equation~\eqref{eq:tau-def}), and the practical identifiability score (PIS, denoted as $\wb$ in Equation~\eqref{eq:wstar-def}), to solve the aforementioned problems. ICIS only uses $A$ and $\mathbf{x}_{0}$, therefore it does not require numerically solving the ODE before the identifiability analysis. We think ICIS is most suitable for designing simulated ODE systems independent of a specific set of real data. SCN and PIS use data from all time points, which are more suitable for practical identifiability analysis with real data.  Using extensive simulation studies, we showed that SCN and PIS correlated with practical identifiability significantly better than $\kappa(X_{1})$ when there was noise in the data.

In addition, we studied the asymptotic properties of practical identifiability for high-dimensional systems with randomly generated $A$ and $\mathbf{x}_{0}$. We reached the following interesting conclusions: a) almost every system is $(A,\mathbf{x}_{0})$-identifiable in the sense that $\mathrm{ICIS}>0$; and b) when $d\to \infty$, almost all systems are practically unidentifiable in the sense that $\mathrm{ICIS} \to 0$. These two seemly contradictory conclusions suggest that the practical identifiability of high-dimensional ODE systems is very different from that of low-dimensional systems, and classical mathematical identifiability analyses are insufficient for analyzing \emph{high-dimensional} real world applications. The focus must be shifted towards practical identifiability analyses characterized by \emph{continuous} scores, especially with the considerations from the stochastic perspective.

Last but not the least, we provide a user-friendly R package \texttt{ode.ident}, with full
documentation and examples, so practitioners with minimum programming skills can analyze the identifiability of linear ODE systems. This R package is available at \url{https://github.com/qiuxing/ode.ident}.

\section{$(A,\mathbf{x}_{0})$-identifiability}
\label{sec:A-x0-identifiability}

In this section, we focus on the mathematical inverse problem for one fully observed trajectory. Namely, we assume that we have the complete observation of \emph{one} solution curve $\mathbf{x}(t)= (x_{1}(t), \dots, x_{d}(t))' \in \mathbb{R}^{d}$) governed by Equation~\eqref{eq:lin-ode} and its derivative on $[0,T]$, with no measurement error.

First, let us define the $(A,\mathbf{x}_{0})$-unidentifiable class as follows.

\begin{definition}[$(A,\mathbf{x}_{0})$-unidentifiable class]
  \label{def:unidentifiably-class}
  For a given system matrix $A\in M_{d\times d}$ and initial condition $\mathbf{x}_{0} \in \R^{d}$, the $(A,\mathbf{x}_{0})$-unidentifiable class, denoted by $[A]_{\mathbf{x}_{0}}$, is a subset of matrices in $M_{d\times d}$ such that
  \begin{equation}
    \label{eq:A-x0-class-def}
    B \in [A]_{\mathbf{x}_{0}} \quad \text{iff} \quad \mathbf{x}(t|A, \mathbf{x}_{0}) = \mathbf{x}(t|B, \mathbf{x}_{0}).
  \end{equation}
  In other words, two system matrices $A,B$ are in the same unidentifiable class if and only if they produce the same solution trajectory at $\mathbf{x}_{0}$.
\end{definition}

The overarching goal of this section is to understand the structure of $[A]_{\mathbf{x}_{0}}$, and the conditions under which this class contains only \textbf{one} member, therefore $A$ can be uniquely determined by the trajectory $\mathbf{x}(t|A, \mathbf{x}_{0})$. To this end, we need to introduce an important geometric concept called \textbf{invariant subspace}, which is a generalization of eigenvectors, and its connection to the Jordan decomposition of $A$ in Section~\ref{sec:jordan-decomp-invariant-subspaces}\footnote{These concepts and results can be found in many graduate level matrix analysis textbooks, e.g., \cite{gohberg2006invariant}.}.

\subsection{Jordan Decomposition and Invariant subspaces}
\label{sec:jordan-decomp-invariant-subspaces}

\begin{definition}[Invariant subspace]
  An invariant subspace of a square matrix $A_{d\times d}$ is a linear subspace $L \subseteq R^{d}$ such that for all $\mathbf{x}\in L$, $A\mathbf{x} \in L$.  We say $L$ is a \emph{proper} invariant subspace if $L \neq \R^{d}$.
\end{definition}

By definition, we see that if a vector $\mathbf{x}$ is in a proper invariant subspace $L$ of $A$, $A\mathbf{x}$ must also stay in $L$. Using mathematical induction, we see that $A^{n}\mathbf{x} \in L$ for every positive integer $n$. With a little more work, it can be proven that $e^{tA}\mathbf{x} \in L$ for $t\in [0,T]$, where $e^{tA}$ is the matrix exponential of $tA$.

The following proposition states that the intersection and linear span (the combination) of two invariant subspaces are invariant subspaces.
\begin{proposition}
  If $L_{1}$ and $L_{2}$ are invariant subspaces of $A$, then
  \begin{enumerate}
  \item $L_{1} \cap L_{2}$ is an invariant subspace of $A$;
  \item $\mathrm{span}(L_{1}, L_{2})$ is an invariant subspace of $A$.
  \end{enumerate}
  In other words, the collection of invariant subspaces of $A$ forms a \emph{lattice}.
\end{proposition}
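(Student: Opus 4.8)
The plan is to verify each closure property directly from the definition of invariant subspace, since both claims are essentially immediate once the right elements are tracked. First I would establish that $L_1 \cap L_2$ is an invariant subspace: take any $\mathbf{x} \in L_1 \cap L_2$. Since $L_1$ is invariant, $A\mathbf{x} \in L_1$; since $L_2$ is invariant, $A\mathbf{x} \in L_2$; hence $A\mathbf{x} \in L_1 \cap L_2$. One should also note that $L_1 \cap L_2$ is a linear subspace (the intersection of two subspaces always is), so nothing more is needed.

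Next I would handle $\mathrm{span}(L_1, L_2) = L_1 + L_2 = \{\mathbf{u} + \mathbf{v} : \mathbf{u}\in L_1,\ \mathbf{v}\in L_2\}$. Again this sum is a linear subspace by the standard argument. For invariance, take a generic element $\mathbf{w} = \mathbf{u} + \mathbf{v}$ with $\mathbf{u}\in L_1$, $\mathbf{v}\in L_2$. By linearity of $A$, $A\mathbf{w} = A\mathbf{u} + A\mathbf{v}$, and $A\mathbf{u}\in L_1$, $A\mathbf{v}\in L_2$ by the invariance hypotheses, so $A\mathbf{w} \in L_1 + L_2$. The only subtlety to mention is that the representation $\mathbf{w} = \mathbf{u}+\mathbf{v}$ need not be unique (unless the sum is direct), but this does not matter: it suffices that \emph{some} decomposition exists, and the argument goes through for any choice.

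Finally I would close with the lattice remark: the set of all invariant subspaces of $A$, partially ordered by inclusion, has $L_1 \wedge L_2 = L_1 \cap L_2$ as greatest lower bound and $L_1 \vee L_2 = \mathrm{span}(L_1, L_2)$ as least upper bound, since any invariant subspace contained in both $L_1$ and $L_2$ is contained in their intersection, and any invariant subspace containing both contains their span. The whole collection is nonempty (it contains $\{0\}$ and $\R^d$), so it is a bounded lattice. I do not anticipate any real obstacle here; the main point of care is simply to state explicitly that intersections and sums of linear subspaces are linear subspaces before invoking invariance, so that the proof is complete rather than only checking the $A$-stability condition.
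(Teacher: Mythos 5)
Your proof is correct and is the standard textbook argument; the paper itself states this proposition without proof, deferring to references such as \cite{gohberg2006invariant}, and your verification of closure under intersection and sum, together with the observation that non-uniqueness of the decomposition $\mathbf{w}=\mathbf{u}+\mathbf{v}$ is harmless, is exactly what such a proof should contain. Nothing further is needed.
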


Based on random matrix theory~\cite{ginibre1965statistical,lehmann1991eigenvalue,tao2012topics}, we know that almost every (w.r.t. the Lebesgue measure on $M_{d\times d}$) $A \in M_{d\times d}$ has $d$ \textbf{distinct eigenvalues}. This conclusion also holds for probability measures associated with most random matrix ensembles such as Ginibre ensemble, Gaussian orthogonal ensemble, Wishart ensemble, etc.\cite{tao2012topics}

Consequently, almost every $A \in M_{d\times d}$ has the following Jordan decomposition
\begin{equation}
  \label{eq:jordan-decomp}
  \begin{gathered}
    A = Q \Lambda Q^{-1}, \qquad \Lambda =
    \begin{pmatrix}
      J_{1} & &  \\
      & \ddots & \\
      & & J_{K}
    \end{pmatrix}, \qquad Q =
    \begin{pmatrix}[c|c|c|c]
      Q_{1} & Q_{2} & \dots & Q_{K}
    \end{pmatrix}.
    \\
    J_{k} =
    \begin{cases}
      c_{k}, & k=1, 2, \dots, K_{1}, \\
      \left(
        \begin{smallmatrix}
          a_{k} & -b_{k} \\
          b_{k} & a_{k}
        \end{smallmatrix}\right), & k=K_{1}+1, K_{1}+2, \dots, K.
    \end{cases} \\
    \dim Q_{k} =
    \begin{cases}
      1, & k= 1,\dots, K_{1}, \\
      2, & k=K_{1}+1, \dots, K.
    \end{cases}
  \end{gathered}
\end{equation}

In other words, $A$ can be decomposed into $K=K_{1}+K_{2}$ Jordan blocks, the first $K_{1}$ such blocks are $1\times 1$ blocks corresponding with real eigenvalues (those $c_{k}$ in Equation~\eqref{eq:jordan-decomp}); and the rest $K_{2}$ blocks are $2\times 2$ blocks corresponding with complex eigenvalues $a_{k} \pm b_{k}i$.  There is a corresponding column-wise decomposition of matrix $Q$, such that each $Q_{k}$ contains: (a) a single column vector of $Q$ which is the eigenvector of $c_{k}$, or (b) two column vectors in $Q$ such that $Q_{k} := (\mathbf{v}_{k1} | \mathbf{v}_{k2})$, which are the ``eigenvectors'' associated with $a_{k} \pm b_{k}i$.

Note that the word ``eigenvector'' in case (b) refers to a generalization of true eigenvectors. In fact, those $2\times 2$ Jordan blocks do not have real eigenvectors; instead, each of them is associated with a 2-dimensional invariant subspace of $A$ and $Q_{k} = (\mathbf{v}_{k1} | \mathbf{v}_{k2})$ is a basis of this 2-dimensional invariant subspace.

We would like to point out that based on simple enumeration of dimensions, we have $d = K_{1}+2K_{2}$, and
\begin{equation}
  \label{eq:dimensions}
  Q_{k} =
  \begin{cases}
    Q_{\cdot k}, & k=1,2,\dots, K_{1}, \\
    (\mathbf{v}_{k1} | \mathbf{v}_{k2}), \quad \mathbf{v}_{k1} = Q_{\cdot 2k-K_{1}-1},\; \mathbf{v}_{k2} = Q_{\cdot 2k-K_{1}}, & k=K_{1}+1, \dots, K.
  \end{cases}
\end{equation}

In other words, $\mathbf{v}_{k1}$ and $\mathbf{v}_{k2}$ in $Q_{k}$ are the ($2k-K_{1}-1$)-th and ($2k-K_{1}$)-th column vectors of $Q$, respectively. For convenience, we define the following correspondences between $i$ (the original dimension in $J$) and $k$ (the index of invariant subspaces):
\begin{equation}
  \label{eq:i-k-functions}
  i(k):= 2k-K_{1}-1, \qquad k(i) :=
  \begin{cases}
    i, & i=1, 2, \dots, K_{1}, \\
    K_{1} + \lceil \frac{i-K_{1}}{2} \rceil, & i=K_{1}+1, \dots, K.
  \end{cases}
\end{equation}
Using the above notation, $\mathbf{v}_{k1} = Q_{\cdot i(k)},\; \mathbf{v}_{k2} = Q_{\cdot i(k)+1}$.

\begin{theorem}
  Let $L_{k} := \mathrm{span}(Q_{k})$. Each $L_{k}$ is an invariant subspace of $A$.  Furthermore, if $L$ is an invariant subspace of $A$, it can always be decomposed as
  \begin{equation}
    \label{eq:invariant-subspace-decomp}
    L = \mathrm{span} \bigcup_{i\in S} L_{k}, \qquad S \subseteq \left\{ 1, 2, \dots, K \right\}.
  \end{equation}
\end{theorem}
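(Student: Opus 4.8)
The plan is to prove the two assertions in sequence, deducing the structure of the real invariant subspaces from the classical description of invariant subspaces of a diagonalizable operator over $\C$ via complexification. The invariance of each $L_{k}$ is the easy part. For $k \le K_{1}$, the single column of $Q_{k}$ is a genuine eigenvector of $A$ for the real eigenvalue $c_{k}$, so $AL_{k} \subseteq L_{k}$ trivially. For $K_{1} < k \le K$, the real Jordan form of Equation~\eqref{eq:jordan-decomp} gives the block relation $AQ_{k} = Q_{k}J_{k}$ with $J_{k} = \left(\begin{smallmatrix} a_{k} & -b_{k} \\ b_{k} & a_{k} \end{smallmatrix}\right)$; reading this column by column shows that $A\mathbf{v}_{k1}$ and $A\mathbf{v}_{k2}$ are real linear combinations of $\mathbf{v}_{k1},\mathbf{v}_{k2}$, hence $AL_{k}\subseteq L_{k}$.

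The core of the argument is a lemma over $\C$: if $\tilde A \in M_{d\times d}$ (regarded over $\C$) is diagonalizable with $d$ \emph{distinct} eigenvalues $\lambda_{1},\dots,\lambda_{d}$ and eigenbasis $\mathbf{e}_{1},\dots,\mathbf{e}_{d}$, then every $\tilde A$-invariant subspace $W \subseteq \C^{d}$ equals $\mathrm{span}\{\mathbf{e}_{j} : j \in T\}$ for some $T \subseteq \{1,\dots,d\}$. I would prove this by taking any $\mathbf{v} = \sum_{j} c_{j}\mathbf{e}_{j} \in W$, noting that $\tilde A^{m}\mathbf{v} = \sum_{j} c_{j}\lambda_{j}^{m}\mathbf{e}_{j} \in W$ for all $m \ge 0$, and using that the Vandermonde matrix built from the distinct $\lambda_{j}$ (restricted to the indices with $c_{j}\ne 0$) is invertible, so each $\mathbf{e}_{j}$ with $c_{j}\ne 0$ already lies in $W$; letting $T$ be the union of these index sets over a basis of $W$ finishes it. Since Section~\ref{sec:jordan-decomp-invariant-subspaces} assumes $A$ has $d$ distinct eigenvalues, the lemma applies to $\tilde A = A$ viewed over $\C$.

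To descend back to $\R$, let $L \subseteq \R^{d}$ be $A$-invariant and form $L_{\C} := L + iL \subseteq \C^{d}$, which is $A$-invariant and, because $L$ is real, also invariant under complex conjugation. The lemma yields $L_{\C} = \mathrm{span}\{\mathbf{e}_{j} : j \in T\}$, and conjugation invariance forces $T$ to be closed under the involution that swaps the index of $\lambda_{j}$ with that of $\bar\lambda_{j}$: a real eigenvalue $c_{k}$ is in $T$ or not on its own, whereas a complex pair $a_{k}\pm i b_{k}$ is either wholly in $T$ or wholly out. For a retained complex pair, $\mathrm{span}\{\mathbf{e},\bar{\mathbf{e}}\} = \mathrm{span}\{\mathrm{Re}\,\mathbf{e},\mathrm{Im}\,\mathbf{e}\}$ over $\C$, which is exactly the complexification of the real $2$-plane $\mathrm{span}(\mathrm{Re}\,\mathbf{e},\mathrm{Im}\,\mathbf{e}) = L_{k} = \mathrm{span}(Q_{k})$ (here $b_{k}\ne 0$ guarantees $\mathrm{Re}\,\mathbf{e}$ and $\mathrm{Im}\,\mathbf{e}$ are $\R$-independent); likewise each retained real eigenvector spans the complexification of its $L_{k}$. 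Collecting the retained blocks into $S \subseteq \{1,\dots,K\}$ gives $L_{\C} = \bigl(\mathrm{span}\bigcup_{k\in S}L_{k}\bigr)_{\C}$, and taking real parts yields $L = \mathrm{span}\bigcup_{k\in S}L_{k}$.

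The main obstacle is not a computation but the bookkeeping in this last step: one must check that $L \mapsto L_{\C}$ is a bijection between the real $A$-invariant subspaces and the conjugation-invariant complex $A$-invariant subspaces (so that no real invariant subspace is lost or spuriously created), and that the paper's real basis $Q_{k} = (\mathbf{v}_{k1}\mid\mathbf{v}_{k2})$ matches $(\mathrm{Re}\,\mathbf{e},\mathrm{Im}\,\mathbf{e})$ for the relevant eigenvector up to an invertible real change of basis within each $2\times 2$ block --- a change that leaves every span in sight unaffected. The Vandermonde step and the column-by-column check in the first paragraph are routine.
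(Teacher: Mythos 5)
Your proof is correct. Note, however, that the paper does not actually prove this theorem: it is stated as a standard fact, with a footnote deferring to graduate matrix-analysis texts (e.g., Gohberg, Lancaster, and Rodman's \emph{Invariant Subspaces of Matrices with Applications}), so there is no in-paper argument to compare against. Your route --- column-by-column verification of $AQ_{k}=Q_{k}J_{k}$ for the easy direction, then the Vandermonde argument for the lattice of invariant subspaces of a complex-diagonalizable operator with distinct eigenvalues, followed by descent to $\R$ via complexification and conjugation-invariance of $L_{\C}=L+iL$ --- is exactly the standard textbook proof, and the two points you flag as requiring bookkeeping (that $L=L_{\C}\cap\R^{d}$ recovers $L$, and that $\mathrm{span}(Q_{k})$ coincides with $\mathrm{span}(\mathrm{Re}\,\mathbf{e},\mathrm{Im}\,\mathbf{e})$ for the relevant eigenvector) are handled correctly; both rest on the uniqueness of the index set $T$ when the eigenvalues are distinct and on the fact that $b_{k}\neq 0$ forces $\mathrm{Re}\,\mathbf{e}$ and $\mathrm{Im}\,\mathbf{e}$ to be $\R$-independent. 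The only thing worth making explicit if this were written out in full is that the standing assumption of $d$ distinct eigenvalues is essential: for repeated eigenvalues the conclusion fails (the paper's Section on repeated eigenvalues exploits precisely the resulting continuum of invariant subspaces), and your Vandermonde step is where that hypothesis enters.
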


From now on, we assume that $A$ has $d$ distinct eigenvalues and can be decomposed as $A = Q \Lambda Q^{-1}$ in Equation~\eqref{eq:jordan-decomp}. The case in which $A$ has repeated eigenvalues will be discussed in Supplementary Text, Section~\ref{sec:repeated-eigenvalues}. For convenience, we will also denote $L_{0} := \left\{ 0_{d} \right\}$, the trivial proper invariant subspace of $A$ that contains only the origin.

\subsection{Initial Condition-based Identifiability Score (ICIS)}
\label{sec:w0star}

One of our main conclusion is that the $(A,\mathbf{x}_{0})$-identifiability defined in Definition~\ref{def:mini-ident} can be determined by the initial condition-based identifiability score (ICIS) defined as follows.

\begin{definition}\label{def:ICIS}
  Let $\mathbf{x}_{0} \in \R^{d}$ and $\tilde{\mathbf{x}}_{0} := Q^{-1}\mathbf{x}_{0} \in \R^{d}$. We define the $\wa$ statistic in the following equation as the Initial Condition-based Identifiability Score (ICIS):
  \begin{equation}\label{eq:w0star-def}
    \begin{gathered}
      w_{0,k} :=
      \begin{cases}
        \tilde{\mathbf{x}}_{0,k} \in \R^{1}, & k=1,2,\dots, K_{1}, \\
        (\tilde{\mathbf{x}}_{0, i(k)}, \tilde{\mathbf{x}}_{0, i(k)+1})' \in \R^{2}, & k=K_{1}+1, \dots, K.
      \end{cases}. \\
      \wa := \min_{k} |w_{0,k}|.
    \end{gathered}
  \end{equation}
  Here $|w_{0,k}|$ is the absolute value of $w_{0,k}$ for $k=1,2,\dots, K_{1}$, and the Euclidean norm of $w_{0,k}$ for $k=K_{1}+1, K_{1}+2, \dots, K$.
\end{definition}

From the geometric perspective, $\mathbf{x}_{0}$ can be decomposed into a linear combination (oblique projections) of $Q_{k}$, and $w_{0,k}$ are the linear coefficients of such a decomposition
\begin{equation}\label{eq:linear-decomp-x0}
  \mathbf{x}_{0} = Q \tilde{\mathbf{x}}_{0} = \sum_{k=1}^{K_{1}} w_{0,k} Q_{k} .
\end{equation}

Heuristically speaking, if $w_{0,k}= 0$ (in $\R^{1}$ or $\R^{2}$), $\mathbf{x}_{0}$ does not contain any information from $L_{k}$. This is because in this case, $\mathbf{x}_{0} \in L_{-k}$, where $L_{-k}$ is the invariant subspace of $A$ that \emph{excludes} $L_{k}$, which implies that the \emph{entire trajectory}, $\mathbf{x}(t|A, \mathbf{x}_{0})$, is in $L_{-k}$ (see the discussion in the beginning of Section~\ref{sec:jordan-decomp-invariant-subspaces}). These ideas are summarized in Lemma~\ref{lemma:equiv-invariant-subspace} below.

\begin{lemma}\label{lemma:equiv-invariant-subspace}
  The following two statements are equivalent
  \begin{enumerate}
  \item There exists a proper invariant subspace $L \subsetneq \R^{d}$ of $A$, such that $\mathbf{x}_{0} \in L$.
  \item There exists $k \in \left\{ 1,2,\dots, K \right\}$, such that $|w_{0,k}| = 0$, or equivalently, $\wa = 0$.
  \end{enumerate}
\end{lemma}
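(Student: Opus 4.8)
The plan is to prove the two implications separately, leaning on the classification of invariant subspaces in Equation~\eqref{eq:invariant-subspace-decomp} together with one elementary observation: under the standing assumption that $A$ has $d$ distinct eigenvalues, stacking the blocks $Q_{1},\dots,Q_{K}$ column-wise reconstitutes the invertible matrix $Q$, so $\R^{d} = L_{1} \oplus \cdots \oplus L_{K}$ is a direct sum and every vector --- in particular $\mathbf{x}_{0}$ --- has a \emph{unique} block expansion $\mathbf{x}_{0} = \sum_{k=1}^{K} Q_{k} w_{0,k}$, which is precisely Equation~\eqref{eq:linear-decomp-x0} with $w_{0,k}$ the $\wa$-coordinates. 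This uniqueness is the common engine of both directions.

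For $(2)\Rightarrow(1)$ I would argue as follows. Assume $\wa = 0$, so $|w_{0,k_{0}}| = 0$ for some index $k_{0}$. Then $\mathbf{x}_{0} = \sum_{k\ne k_{0}} Q_{k} w_{0,k}$ lies in $L_{-k_{0}} := \mathrm{span}\bigcup_{k\ne k_{0}} L_{k}$. Each $L_{k}$ is an invariant subspace (preceding Theorem), and the span of finitely many invariant subspaces is again invariant (the lattice Proposition), so $L_{-k_{0}}$ is an invariant subspace of $A$; moreover it is \emph{proper} since $\dim L_{-k_{0}} = d - \dim L_{k_{0}} < d$ by the direct-sum decomposition. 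Taking $L = L_{-k_{0}}$ establishes statement (1).

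For $(1)\Rightarrow(2)$: assume $\mathbf{x}_{0} \in L$ for some proper invariant subspace $L \subsetneq \R^{d}$. By Equation~\eqref{eq:invariant-subspace-decomp}, $L = \mathrm{span}\bigcup_{k\in S} L_{k}$ for some $S \subseteq \{1,\dots,K\}$, and $S \ne \{1,\dots,K\}$ because $L$ is proper; fix $k_{0} \notin S$. Since $\{Q_{k}\}_{k\in S}$ spans $L \ni \mathbf{x}_{0}$, we may write $\mathbf{x}_{0} = \sum_{k\in S} Q_{k} u_{k}$; padding with zeros for $k \notin S$ gives a block expansion of $\mathbf{x}_{0}$ over all $L_{k}$ whose $k_{0}$-block vanishes. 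By the uniqueness noted above this expansion must coincide with Equation~\eqref{eq:linear-decomp-x0}, forcing $w_{0,k_{0}} = 0$, hence $\wa = \min_{k} |w_{0,k}| = 0$.

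I expect the only genuinely fiddly point to be bookkeeping around the complex Jordan blocks: for $k > K_{1}$ the coordinate $w_{0,k}$ is a vector in $\R^{2}$ and $Q_{k}$ a $d\times 2$ block, so ``$w_{0,k}=0$'' must be read as the vanishing of the whole pair, and the direct-sum / uniqueness argument must be phrased block-wise rather than coordinate-wise, using the index correspondences in Equations~\eqref{eq:i-k-functions} and \eqref{eq:dimensions}. Once that notation is in place the argument is routine and introduces no idea beyond the invariant-subspace classification, so the write-up should be short.
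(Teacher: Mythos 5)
Your proof is correct and follows essentially the same route the paper takes: the paper's own justification is the heuristic discussion immediately preceding the lemma (your $(2)\Rightarrow(1)$ direction, via $\mathbf{x}_{0}\in L_{-k}$) combined with the classification of invariant subspaces in Equation~\eqref{eq:invariant-subspace-decomp} and the uniqueness of the block expansion~\eqref{eq:linear-decomp-x0} for the converse. Your write-up simply makes explicit the direct-sum/uniqueness bookkeeping (including the $\R^{2}$ blocks) that the paper leaves implicit, which is a faithful completion rather than a different argument.
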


Now we are ready to present the following theorem:
\begin{theorem}[Computational criterion for $(A,\mathbf{x}_{0})$-identifiability]\label{thm:identifiability}
  Assuming that an ODE system $A$ has $d$ distinct eigenvalues. This system is identifiable at $\mathbf{x}_{0}$ if and only if the ICIS is nonzero.
\end{theorem}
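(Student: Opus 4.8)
The plan is to leverage the connection, already quoted from Stanhope et al., that $(A,\mathbf{x}_0)$-identifiability holds if and only if the solution curve $\mathbf{x}(t\mid A,\mathbf{x}_0)$ is not contained in any proper invariant subspace of $A$. Since $e^{tA}\mathbf{x}_0$ stays in any invariant subspace that contains $\mathbf{x}_0$ (the orbit-closure observation made in Section~\ref{sec:jordan-decomp-invariant-subspaces}), and since conversely the trajectory always contains $\mathbf{x}_0$ itself (at $t=0$), the trajectory lies in a proper invariant subspace $L$ if and only if $\mathbf{x}_0 \in L$. Hence the Stanhope criterion reduces to: $A$ is identifiable at $\mathbf{x}_0$ iff $\mathbf{x}_0$ is contained in \emph{no} proper invariant subspace of $A$. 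At that point the theorem follows immediately by contraposition from Lemma~\ref{lemma:equiv-invariant-subspace}, which states precisely that $\mathbf{x}_0$ lies in some proper invariant subspace $L \subsetneq \R^d$ of $A$ if and only if $\wa = 0$.

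So the proof is essentially a short chain of equivalences, and the main work is to assemble it cleanly. First I would restate the Stanhope characterization in the language of Definition~\ref{def:mini-ident}: $A$ identifiable at $\mathbf{x}_0$ $\iff$ the set $\{e^{tA}\mathbf{x}_0 : 0 \le t < T\}$ spans $\R^d$ (equivalently is not contained in a proper invariant subspace). Second, I would verify the bidirectional reduction "trajectory in $L$ $\iff$ $\mathbf{x}_0 \in L$'' for proper invariant subspaces $L$: the forward direction is trivial since $\mathbf{x}_0 = e^{0\cdot A}\mathbf{x}_0$ is a point on the trajectory, and the reverse direction is the closure-of-the-orbit fact that $e^{tA}\mathbf{x}_0 \in L$ whenever $\mathbf{x}_0 \in L$, which follows from $A^n\mathbf{x}_0 \in L$ for all $n$ plus the power-series definition of the matrix exponential and the fact that $L$ is closed. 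Third, I would combine these to get: $A$ not identifiable at $\mathbf{x}_0$ $\iff$ $\mathbf{x}_0$ belongs to some proper invariant subspace of $A$. Fourth, I would invoke Lemma~\ref{lemma:equiv-invariant-subspace} to rewrite the right-hand side as $\wa = 0$, and negate to conclude.

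The hardest part is really a matter of care rather than difficulty: I must make sure the ``span of the trajectory'' formulation and the ``not contained in a proper invariant subspace'' formulation are genuinely interchangeable, since the span of the trajectory $\{e^{tA}\mathbf{x}_0\}$ is by construction an invariant subspace of $A$ (it is $A$-invariant because differentiating the trajectory gives $A e^{tA}\mathbf{x}_0$, which lies in the same span, or alternatively because it equals $\mathrm{span}\{A^n\mathbf{x}_0 : n \ge 0\}$, the cyclic/Krylov subspace generated by $\mathbf{x}_0$). The equivalence then amounts to: this cyclic subspace equals $\R^d$ iff it is not properly contained in any invariant subspace — which is immediate once one notes that the cyclic subspace is the \emph{smallest} invariant subspace containing $\mathbf{x}_0$, so it is proper exactly when \emph{some} proper invariant subspace contains $\mathbf{x}_0$. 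I would state this smallest-invariant-subspace observation explicitly, since it is the linchpin that ties Definition~\ref{def:mini-ident}, the Stanhope criterion, and Lemma~\ref{lemma:equiv-invariant-subspace} together.

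One subtlety worth flagging in the writeup: the reduction uses that the trajectory is analytic in $t$, so ``$\mathbf{x}(t\mid A,\mathbf{x}_0) = \mathbf{x}(t\mid B,\mathbf{x}_0)$ for some $0<t<T$'' in Definition~\ref{def:mini-ident} can only be violated on an isolated set unless the curves agree identically; combined with $e^{tA}\mathbf{x}_0 = e^{tB}\mathbf{x}_0$ forcing $A\mathbf{x}_0 = B\mathbf{x}_0$, $A^2\mathbf{x}_0 = B^2\mathbf{x}_0$, etc. (by repeated differentiation at $t=0$), non-identifiability means $A$ and $B$ agree on the entire cyclic subspace generated by $\mathbf{x}_0$, and they can differ elsewhere precisely when that subspace is proper. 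This is really just re-deriving the Stanhope criterion, so I would either cite it directly or include this one-line argument; either way the remaining steps are as above and the theorem drops out.
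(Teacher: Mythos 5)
Your proposal is correct and follows essentially the same route as the paper: the paper's proof simply cites Stanhope et al.\ (their Lemma 3.2 and Theorem 3.4) for the equivalence ``identifiable at $\mathbf{x}_{0}$ iff $\mathbf{x}_{0}$ is not contained in a proper invariant subspace of $A$'' and then applies Lemma~\ref{lemma:equiv-invariant-subspace}, exactly as you do. The extra material you supply --- the cyclic-subspace argument and the reduction from ``trajectory in $L$'' to ``$\mathbf{x}_{0}\in L$'' --- is a sound re-derivation of the cited Stanhope results rather than a new idea, so it adds rigor but not a different approach.
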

\begin{proof}
  Based on Lemma 3.2 and Theorem (3.4) in~\cite{stanhope2014identifiability},  we know that system $A$ is identifiable at $\mathbf{x}_{0}$ if and only if $\mathbf{x}_{0}$ is not contained in a proper invariant subspace of $A$, which is equivalent to $\wa \ne 0$ based on our Lemma~\ref{lemma:equiv-invariant-subspace}.
\end{proof}

Theorem~\ref{thm:identifiability} implies that, when $A$ is not identifiable at $\mathbf{x}_{0}$, there must be a nonempty subset $S_{0} \subset \left\{ 1, 2, \dots, K \right\}$ such that $|w_{0,k}| = 0$, for $k \in S_{0}$. WLOG, we assume that $\mathbf{x}_{0} \ne 0_{d}$, so its complement set $S_{+} :=  \left\{ 1, 2, \dots, K \right\} \setminus S_{0}$ must be nonempty. By construction, $L_{+} := \mathrm{span} \bigcup_{k\in S_{+}} L_{k}$ is a proper invariant subspace, and $\mathbf{x}_{0} \in L_{+}$.

The two index sets $S_{0}$ and $S_{+}$ induce the following diagonal binary matrices
\begin{equation}
  \label{eq:dummy-matrices-S0-S+}
  I_{0} := \mathrm{diag}(a_{i}), \quad a_{i} =
  \begin{cases}
    1, & k(i)\in S_{0}, \\
    0, & k(i) \ne S_{0}.
  \end{cases} \qquad I_{+} := I_{d\times d} - I_{0}.
\end{equation}

They can be used to construct the following decomposition of the eigenvector matrix $Q$ and the Jordan block matrix $J$
\begin{equation}\label{eq:Qplus-Q0-decomp}
  \begin{gathered}
    Q_{0} := Q I_{0}, \qquad Q_{+} := Q I_{+}, \qquad Q = Q_{0} + Q_{+}. \\
    J_{0} := I_{0} J I_{0} = JI_{0}, \qquad J_{+} := I_{+} J I_{+} = JI_{+}, \qquad J = J_{0} + J_{+}.
  \end{gathered}
\end{equation}
Intuitively, $Q_{0}$ and $J_{0}$ replace column vectors in $Q$ and blocks in $J$ into zeros if they belong to $L_{+}$. $Q_{+}$ and $J_{+}$ are defined in exactly the opposite way.

Using these notation, we describe the explicit structure of $(A,\mathbf{x}_{0})$-unidentifiable class in the following Theorem.

\begin{theorem}[Structure of the unidentifiable classes]\label{thm:structure}
  $(A,\mathbf{x}_{0})$-unidentifiable class has the following explicit structure
  \begin{equation}
    \label{eq:structure-of-equiv-class}
    \begin{split}
      [A]_{\mathbf{x}_{0}} &= Q \left( J_{+} + I_{0} (J_{0} + D) I_{0} \right)Q^{-1} = A + Q (I_{0} D I_{0}) Q^{-1}, \qquad D \in M_{d\times d}.
    \end{split}
  \end{equation}
  In other words, two matrices $A_{1}$ and $A_{2}$ are in the same $(A,\mathbf{x}_{0})$-unidentifiable class (written as $A_{1} \sim A_{2}$) iff there exist $D\in M_{d\times d}$, such that $A_{1} - A_{2} = Q (I_{0}D I_{0})Q^{-1}$.
\end{theorem}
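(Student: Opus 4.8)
The plan is to pass to the Jordan basis of $A$ and reduce the trajectory–matching condition to an algebraic condition on matrix powers. Write $\tilde{B}:=Q^{-1}BQ$, so that $\mathbf{x}(t|B,\mathbf{x}_{0})=Qe^{t\tilde{B}}\tilde{\mathbf{x}}_{0}$ and $\mathbf{x}(t|A,\mathbf{x}_{0})=Qe^{tJ}\tilde{\mathbf{x}}_{0}$. Both curves are real-analytic in $t$, so the condition $B\in[A]_{\mathbf{x}_{0}}$ is equivalent to the equality of all Taylor coefficients at $t=0$, i.e.
\[
  J^{n}\tilde{\mathbf{x}}_{0}=\tilde{B}^{n}\tilde{\mathbf{x}}_{0}\qquad\text{for every }n\ge 0 .
\]
It therefore suffices to describe all $\tilde{B}$ satisfying this system of vector equations and then conjugate back by $Q$.

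The subspace that governs the argument is $L_{+}=\mathrm{span}\bigcup_{k\in S_{+}}L_{k}$; working throughout in the Jordan basis (so that $A$, $B$, $\mathbf{x}_{0}$, $L_{+}$ are identified with $J$, $\tilde{B}$, $\tilde{\mathbf{x}}_{0}$, $\mathrm{range}(I_{+})$), this $L_{+}$ contains $\tilde{\mathbf{x}}_{0}$ because $I_{0}\tilde{\mathbf{x}}_{0}=0$ by the definition of $S_{0}$, and it is a proper $J$-invariant subspace as already noted before the theorem. The key lemma I would prove is that the cyclic subspace $\mathrm{span}\{\tilde{\mathbf{x}}_{0},J\tilde{\mathbf{x}}_{0},J^{2}\tilde{\mathbf{x}}_{0},\dots\}$ is \emph{all} of $L_{+}$. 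Since $J$ is block diagonal and $\tilde{\mathbf{x}}_{0}$ splits into the corresponding blocks, this reduces to: (i) in a $1\times 1$ block a nonzero component spans that line; (ii) in a $2\times 2$ block $\left(\begin{smallmatrix}a_{k}&-b_{k}\\ b_{k}&a_{k}\end{smallmatrix}\right)$ with $b_{k}\ne 0$, \emph{any} nonzero $w_{0,k}\in\R^{2}$ is a cyclic vector, because such a block has no proper real invariant subspace; and (iii) since $A$ has $d$ distinct eigenvalues the blocks have pairwise coprime minimal polynomials, so their cyclic pieces add up to a direct sum whose dimension is exactly $\dim L_{+}$, forcing equality.

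For the forward direction, assume $J^{n}\tilde{\mathbf{x}}_{0}=\tilde{B}^{n}\tilde{\mathbf{x}}_{0}$ for all $n$. Then the cyclic subspaces of $\tilde{\mathbf{x}}_{0}$ under $J$ and under $\tilde{B}$ coincide term by term, so $L_{+}$ is $\tilde{B}$-invariant; setting $m:=\dim L_{+}$, the vectors $\tilde{\mathbf{x}}_{0},J\tilde{\mathbf{x}}_{0},\dots,J^{m-1}\tilde{\mathbf{x}}_{0}$ form a basis of $L_{+}$ on which $\tilde{B}(J^{i}\tilde{\mathbf{x}}_{0})=\tilde{B}^{i+1}\tilde{\mathbf{x}}_{0}=J^{i+1}\tilde{\mathbf{x}}_{0}=J(J^{i}\tilde{\mathbf{x}}_{0})$, so $\tilde{B}$ and $J$ agree on $L_{+}$. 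The single constraint ``$\tilde{B}-J$ annihilates $L_{+}$'', rewritten in coordinates, says that the columns of $\tilde{B}-J$ indexed by $S_{+}$ vanish while the remaining columns are unconstrained; I expect the bookkeeping with $I_{0},I_{+},J_{0},J_{+}$ to package this into exactly the representation \eqref{eq:structure-of-equiv-class} after routine index chasing. Conversely, if $\tilde{B}-J$ annihilates $L_{+}$, a one-line induction gives $\tilde{B}^{n}\tilde{\mathbf{x}}_{0}=J^{n}\tilde{\mathbf{x}}_{0}$ for all $n$: if $J^{n}\tilde{\mathbf{x}}_{0}\in L_{+}$ then $(\tilde{B}-J)J^{n}\tilde{\mathbf{x}}_{0}=0$, hence $\tilde{B}(J^{n}\tilde{\mathbf{x}}_{0})=J^{n+1}\tilde{\mathbf{x}}_{0}\in L_{+}$; summing the exponential series then yields $e^{t\tilde{B}}\tilde{\mathbf{x}}_{0}=e^{tJ}\tilde{\mathbf{x}}_{0}$, i.e. $B\in[A]_{\mathbf{x}_{0}}$.

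The main obstacle is the cyclic-subspace lemma, and inside it the $2\times 2$ blocks: one must show that a real rotation–scaling block with $b_{k}\ne 0$ admits no proper real invariant subspace (so that every nonzero vector is cyclic for it), and then glue the blocks together using the distinctness of the eigenvalues, i.e. coprimality of the blockwise minimal polynomials. A secondary, essentially clerical, difficulty is converting the coordinate-free statement ``$\tilde{B}$ and $J$ agree on $L_{+}$'' into the precise matrix identity \eqref{eq:structure-of-equiv-class}; the analyticity step that turns equality of the two solution curves on an interval into equality of all derivatives at $t=0$ is standard once written down.
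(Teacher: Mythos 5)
Your reduction of the trajectory condition to $\tilde{B}^{n}\tilde{\mathbf{x}}_{0}=J^{n}\tilde{\mathbf{x}}_{0}$ for all $n\ge 0$, the cyclic-subspace lemma (blockwise cyclicity for the $1\times 1$ and rotation--scaling blocks, glued together via coprimality of the blocks' minimal polynomials), and the conclusion that $B\in[A]_{\mathbf{x}_{0}}$ iff $\tilde{B}:=Q^{-1}BQ$ agrees with $J$ on $L_{+}$ are all sound. The gap sits exactly in the step you defer to ``routine index chasing.'' The condition you correctly derive --- the columns of $\tilde{B}-J$ indexed by the $S_{+}$ coordinates vanish while the remaining columns are free --- reads $\tilde{B}-J=DI_{0}$ with $D\in M_{d\times d}$ arbitrary. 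That is \emph{not} the statement's $\tilde{B}-J=I_{0}DI_{0}$, which additionally kills the rows indexed by $S_{+}$; the two sets coincide only when $S_{0}$ or $S_{+}$ is empty. No bookkeeping closes this distance, because the larger set genuinely lies in $[A]_{\mathbf{x}_{0}}$: if $\tilde{B}-J=DI_{0}$ then, since $I_{0}$ commutes with the block-diagonal $J$ and $I_{0}J^{n}\tilde{\mathbf{x}}_{0}=J^{n}I_{0}\tilde{\mathbf{x}}_{0}=0$, induction gives $\tilde{B}^{n}\tilde{\mathbf{x}}_{0}=J^{n}\tilde{\mathbf{x}}_{0}$ for every $n$.

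Concretely: take $d=2$, $Q=I_{2}$, $A=J=\diag(1,2)$, $\mathbf{x}_{0}=(1,0)'$, so that $S_{+}=\{1\}$, $S_{0}=\{2\}$, $I_{0}=\diag(0,1)$. Then $B=\left(\begin{smallmatrix}1&c\\0&2\end{smallmatrix}\right)$ satisfies $B^{n}\mathbf{x}_{0}=(1,0)'=A^{n}\mathbf{x}_{0}$ for all $n$, hence $e^{tB}\mathbf{x}_{0}=(e^{t},0)'=e^{tA}\mathbf{x}_{0}$ and $B\in[A]_{\mathbf{x}_{0}}$; yet $B-A=\left(\begin{smallmatrix}0&c\\0&0\end{smallmatrix}\right)$ has the form $DI_{0}$ but not $I_{0}DI_{0}$. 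So your argument, carried to completion, establishes $[A]_{\mathbf{x}_{0}}=\left\{A+QDI_{0}Q^{-1}:\,D\in M_{d\times d}\right\}$ --- equivalently, $B\sim A$ iff $B-A$ annihilates the invariant subspace $L_{+}=\mathrm{span}(QI_{+})$ --- a class with $d\cdot d_{0}$ rather than $d_{0}^{2}$ degrees of freedom. You must either record this corrected form or exhibit an extra constraint ruling out the $I_{+}DI_{0}$ perturbations, and the example above shows no such constraint exists. The same enlargement applies to Equation~\eqref{eq:example1-equiv-class}: $A+\mathbf{u}\,(Q^{-1})_{1\cdot}$ reproduces the trajectory from $\mathbf{x}_{0}^{(b)}$ for \emph{every} $\mathbf{u}\in\R^{3}$, not only for $\mathbf{u}=bQ_{\cdot 1}$, because $(Q^{-1})_{1\cdot}\,e^{tA}\mathbf{x}_{0}^{(b)}=e^{-t}\cdot 0=0$ for all $t$.
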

\begin{proof}
  See Section~\ref{sec:mathematical-proofs}, Supplementary Text.
\end{proof}

\begin{remark}
  The degrees of freedom in $[A]_{\mathbf{x}_{0}}$ is controlled by $D_{0} := I_{0}DI_{0}$, which has $d_{0}^{2}$ degrees of freedom (not $d^{2}$). This is because by construction, $D_{0}$ is a \textbf{sparse matrix} such that its $ij$th element satisifies
  \begin{equation}
    \label{eq:structure-of-D0}
    D_{0,ij} = 0, \qquad \text{if } k(i), k(j) \in S_{0}.
  \end{equation}
\end{remark}

\subsection{A 3-Dimensional Example}
\label{sec:examples}

\begin{example}\label{example:ex1-3d}
  In this example, the system matrix $A$ and its Jordan canonical form are given as follows:
  \begin{equation}\label{eq:example-3d}
    \begin{gathered}
      A =
      \begin{pmatrix}
        0 & 1 & -1 \\
        2 & 0 & 0 \\
        3 & 1 & 0
      \end{pmatrix}
      = Q J Q^{-1}, \qquad
      J =
      \begin{pmatrix}
        -1 & 0 & 0 \\
        0 & 1/2 & \sqrt{7}/2 \\
        0 & -\sqrt{7}/2 & 1/2
      \end{pmatrix}. \\
      Q \approx
      \begin{pmatrix}
        0.408 & 0 & 0.316 \\
        -0.816 & 0.418 & 0.158 \\
        -0.408 & 0.837 & 0
      \end{pmatrix}, \qquad
      Q^{-1} \approx
      \begin{pmatrix}
        0.612 & -1.225 & 0.612 \\
        0.299 & -0.598 & 1.494 \\
        2.372 & 1.581 & -0.791
      \end{pmatrix}.
    \end{gathered}
  \end{equation}

  Based on earlier discussions, $A$ has two proper invariant subspaces. $L_{1} := \mathrm{span}(Q_{\cdot 1})$ is a one-dimensional space corresponding with the real eigenvalue $\lambda_{1}=-1$, and $L_{2} := \mathrm{span}\left( Q_{\cdot 2}, Q_{\cdot 3} \right)$ is a two-dimensional space corresponding with $\lambda_{2}, \lambda_{3} = 1/2 \pm \sqrt{7}i/2$. Here $Q_{\cdot j}$ is the $j$th column vector of matrix $Q$.

  Let us consider the following two initial conditions:
  \begin{equation*}
    \mathbf{x}_{0}^{(a)} := Q
    \begin{pmatrix}
      2 \\
      -1 \\
      0
    \end{pmatrix} \approx
    \begin{pmatrix}
      0.816 \\
      -2.051 \\
      -1.653
    \end{pmatrix}, \qquad
    \mathbf{x}_{0}^{(b)} := Q
    \begin{pmatrix}
      0 \\
      -2 \\
      3
    \end{pmatrix} \approx
    \begin{pmatrix}
      0.949 \\
      -0.362 \\
      -1.673
    \end{pmatrix}.
  \end{equation*}

  Notice that $\mathbf{x}_{0}^{(a)}$ contains information from both $L_{1}$ and $L_{2}$, but $\mathbf{x}_{0}^{(b)}$ only contains information from $L_{2}$. Based on earlier discussions, $A$ is identifiable at $\mathbf{x}_{0}^{(a)}$ but not $\mathbf{x}_{0}^{(b)}$. Using Equation~\eqref{eq:structure-of-equiv-class}, the $(A,\mathbf{x}_{0})$-unidentifiable class in the latter case can be represented as follows.
  \begin{equation}
    \label{eq:example1-equiv-class}
    \begin{gathered}
      I_{+} =
      \begin{pmatrix}
        0 & 0 & 0 \\
        0 & 1 & 0 \\
        0 & 0 & 1
      \end{pmatrix}, \quad I_{0} =
      \begin{pmatrix}
        1 & 0 & 0 \\
        0 & 0 & 0 \\
        0 & 0 & 0
      \end{pmatrix}, \\
      I_{0}DI_{0} =
      \begin{pmatrix}
        b & 0 & 0 \\
        0 & 0 & 0 \\
        0 & 0 & 0
      \end{pmatrix}, \quad b\in \R. \\
      J_{+} =
      \begin{pmatrix}
        0 & 0 & 0 \\
        0 & 1/2 & \sqrt{7}/2 \\
        0 & -\sqrt{7}/2 & 1/2
      \end{pmatrix}, \qquad Q (I_{0}DI_{0})Q^{-1} =
      bQ_{0}Q^{-1},  \\
      \begin{split}
        [A]_{\mathbf{x}_{0}^{(b)}} &= QJ_{+}Q^{-1} +bQ_{0}Q^{-1} \\
        &=
        \dfrac{1}{4}
        \begin{pmatrix}
          1 & 2 & -3 \\
          6 & 4 & -2 \\
          11 & 6 & -1
        \end{pmatrix}
        + \dfrac{b}{4}
        \begin{pmatrix}
          1 & -2 & 1 \\
          -2 & 4 & -2 \\
          -1 & 2 & -1
        \end{pmatrix}.
      \end{split}
    \end{gathered}
  \end{equation}

  Here $b\in \R$ is an arbitrary parameter, and the $(A,\mathbf{x}_{0})$-unidentifiable class is characterized by $b$ times a \emph{full matrix}, therefore different choices of $b$ affects all nine elements in $A$. Consequently, we cannot determine the value of \emph{any} entry in $A$ without additional information. In fact, we cannot even determine whether a particular $A_{ij}$ is zero or nonzero, which is sometimes referred to as the network topology of $A$. For example, if we set $b=-1$, we get the original system $A$ specified in Equation~\eqref{eq:example-3d} with four zero entries.  When we set $b=3$, we obtain an equivalent matrix with completely different topology and values than $A$
  \begin{equation}
    \label{eq:example1-Atilde}
    \tilde{A} =
    \begin{pmatrix}
      1 & -1 & 0 \\
      0 & 4 & -2 \\
      2 & 3 & -1
    \end{pmatrix}.
  \end{equation}

  It is easy to check that:
  \begin{equation*}
    e^{t\tilde{A}}\mathbf{x}_{0}^{(a)} \neq e^{tA}\mathbf{x}_{0}^{(a)}, \qquad e^{t\tilde{A}}\mathbf{x}_{0}^{(b)} = e^{tA}\mathbf{x}_{0}^{(b)}.
  \end{equation*}
\end{example}

Before we move on to the next topic (practical identifiability), we would like to present two auxiliary results that are useful in practice.
\begin{enumerate}
\item In Supplementary Text, Section~\ref{sec:repeated-eigenvalues}, we provide a detailed analysis of identifiability issues induced by repeated eigenvalues in $A$, and provided the closed-form structure of unidentifiable class for these matrices in Equation~\eqref{eq:rep-eigen-equiv-class}. Based on these results,  we recommend researchers avoid systems that have nearly identical eigenvalues in designing of simulation studies.
\item in Supplementary Text, Section~\ref{sec:prior-inform-ident}, we show that while it is possible to use prior information in the form of structural constraints to resolve the identifiability issue of a problematic system $(A,\mathbf{x}_{0})$, such constraints must be \textbf{compatible} with the said system, otherwise: (a) the system may still suffer from the identifiability issue; or (b) under these constraints, no system can generate the observed solution curve.
\end{enumerate}

\section{Data-based Identifiability Scores}
\label{sec:pract-ident}

We now focus on the following practical problem: to quantify the $(A,\mathbf{x}_{0})$-identifiability from imperfect observations in real world applications. To this end, we assume that the observed data is a set of discrete and noisy observations on a time grid $\left\{t_{1}, \dots, t_{J}  \right\}$:
\begin{equation}
  \label{eq:noisy-obs}
  \begin{gathered}
    y_{ij} := x_{i}(t_{j}) + \epsilon_{ij}, \qquad \epsilon_{ij}\sim F_{\epsilon}, \qquad i=1,2,\dots, d, \quad j=1, 2, \dots, n.
  \end{gathered}
\end{equation}

In the above equation, $F_{\epsilon}$, the probability distribution of measurement error, is assumed to be absolutely continuous w.r.t. the Lebesgue measure on $M_{d\times J}$.  For convenience, we will also use collective notations $X = \left\{x_{i}(t_{j})\right\} \in M_{d\times n}$, $Y = \{y_{ij}\} \in M_{d\times n}$, and $\bm{\epsilon} = \{\epsilon_{ij}\} \in M_{d\times n}$. With these matrix notations, Equation~\eqref{eq:noisy-obs} can be simplified as $Y = X + \bm{\epsilon}$.

\subsection{Minimal Signals for Reconstructing $A$}
\label{sec:min-signal-for-A}
In this section, we demonstrate that even for a theoretically identifiable system, if the ``signal'' in a subspace is too small, we are still not able to reconstruct $A$ in practice.

\begin{example}\label{example:2d-hard}
  We consider a two-dimensional system
  \begin{equation}
    \label{eq:example2}
    \begin{gathered}
      A = Q \Lambda Q^{-1} =
      \begin{pmatrix}
        -1.875 & 2.382 \\
        2.382 & -4.625
      \end{pmatrix}
      , \\
      Q = \mathrm{Rot}_{\pi/6} =
      \begin{pmatrix}
        \frac{\sqrt{3}}{2} & -\frac{1}{2} \\
        \frac{1}{2} & \frac{\sqrt{3}}{2}
      \end{pmatrix}, \quad
      Q^{-1} = \mathrm{Rot}_{-\pi/6} =
      \begin{pmatrix}
        \frac{\sqrt{3}}{2} & \frac{1}{2} \\
        -\frac{1}{2} & \frac{\sqrt{3}}{2}
      \end{pmatrix}, \quad \Lambda =
      \begin{pmatrix}
        -1/2 & 0 \\
        0 & -6
      \end{pmatrix}. \\
    \end{gathered}
  \end{equation}

  We generate the solution curve $\mathbf{x}(t)$ from this system and record its values $x_{i}(t_{j})$ at $n=101$ equally spaced time points on $[0,1]$, $t_{1}=0, t_{2} = 0.01, \dots, t_{101}=1$. A small normal measurement error, $\epsilon_{ij} \sim N(0, 0.01^{2})$, is added to each observation.

  It is easy to see that $A$ has two one-dimensional proper invariant subspaces, $L_{1} = \mathrm{span}((\frac{\sqrt{3}}{2},  \frac{1}{2})')$, $L_{2} = \mathrm{span}((-\frac{1}{2}, \frac{\sqrt{3}}{2})')$. Let us consider two initial conditions
  \begin{equation*}
    \mathbf{x}_{0}^{(A)} =
    \begin{pmatrix}
      1 \\
      1
    \end{pmatrix}, \qquad
    \mathbf{x}_{0}^{(B)} =
    \begin{pmatrix}
      1.72 \\
      1
    \end{pmatrix}.
  \end{equation*}

  It can be shown that $A$ is identifiable at both $\mathbf{x}_{0}^{(A)}$ and $\mathbf{x}_{0}^{(B)}$. Of note, we would like to mention that this analysis can be done by applying the \texttt{ICISAnalysis()} function in our R package.

  Using the functional two-stage method (see Section~\ref{sec:functional-2stage}, Supplementary Text), we are able to estimate $A$ and produce two fitted curves for both cases.  The fitted curves, denoted by $(\hat{x}_{1}(t), \hat{x}_{2}(t))'$, look reasonable in Figure~\ref{fig:example2} for both cases.

  \begin{figure}
    \includegraphics[width=\textwidth]{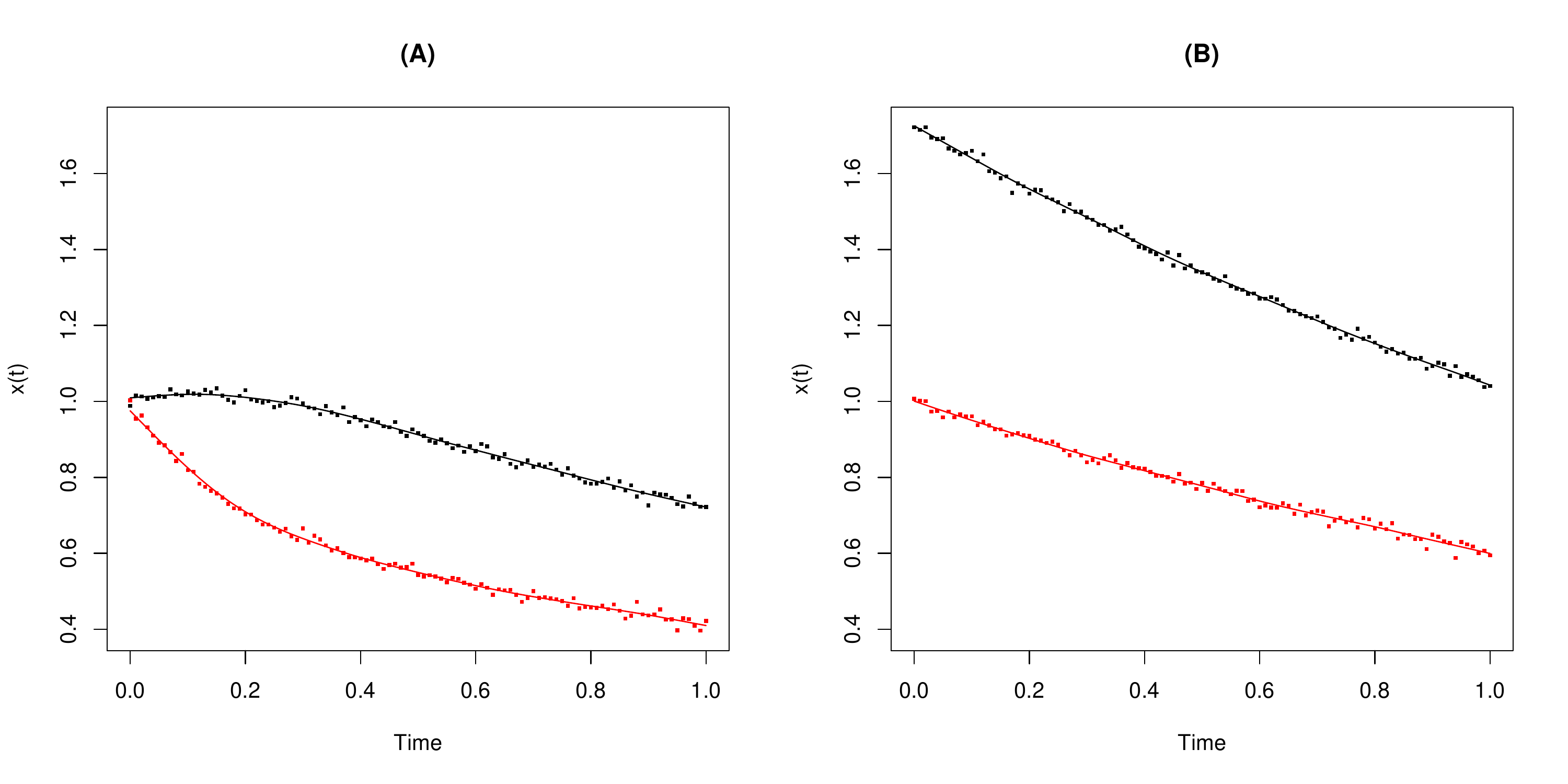}
    \caption{An illustration of the fitted solution trajectories in Example \ref{example:2d-hard}. The black curve is $\hat{x}_{1}(t)$ and the red curve is $\hat{x}_{2}(t)$ in both sub-figures. The discrete data (illustrated by dots in this figure) are observed on a total number of $n=101$ time points evenly assigned on $[0,1]$. A small normal measurement error, $\epsilon_{ij} \sim N(0, 0.01^{2})$, is added to each observation. (A): $\mathbf{x}_{0}^{(A)} = (1,1)'$.  (B): $\mathbf{x}_{0}^{(B)} = (1.72, 1)'$.  Case (A) is practically identifiable with the functional two-stage method while Case (B) is not.}
    \label{fig:example2}
  \end{figure}

  However, the two reconstructed system matrices are quite different:
  \begin{equation*}
    \begin{gathered}
      \hat{A}^{(A)} =
      \begin{pmatrix}
        -1.58 & 1.93 \\
        1.93 & -3.91
      \end{pmatrix},\qquad \|\hat{A}^{(A)} -A \|_{F}^{2} = 1.01, \\
      \hat{A}^{(B)} =
      \begin{pmatrix}
        -0.69 & 0.33 \\
        -1.81 & 2.62
      \end{pmatrix}, \qquad \|\hat{A}^{(B)} -A \|_{F}^{2}  = 75.65.
    \end{gathered}
  \end{equation*}

  Here $\|\cdot\|_{F}$ is the Frobenius norm of a matrix. It is clear that $A$ is practically identifiable at $\mathbf{x}_{0}^{(A)}$ only, not at $\mathbf{x}_{0}^{(B)}$.  From this example, we see that even when the ODE system is mathematical identifiable, its practical identifiability may still be an issue.

\end{example}

In-depth analysis shows that the unidentifiability issue in case (B) is due to the fact that $\mathbf{x}_{0}^{(B)}$ is ``almost'' contained in $L_{1}$, so that $L_{2}$ had only a tiny bit of information. Denote the basis in $L_{1}$ and $L_{2}$ as $Q_{1} = (\frac{\sqrt{3}}{2},  \frac{1}{2})'$ and $Q_{2} = (-\frac{1}{2}, \frac{\sqrt{3}}{2})'$, respectively. We have
\begin{equation*}
  \begin{gathered}
    w_{0,1}^{(A)} = \uinner{\mathbf{x}_{0}^{(A)}}{Q_{1}} = 1.366, \qquad \mathrm{ICIS}^{(A)} = w_{0,2}^{(A)} = \uinner{\mathbf{x}_{0}^{(A)}}{Q_{2}} = 0.366. \\
    w_{01}^{(B)} = \uinner{\mathbf{x}_{0}^{(B)}}{Q_{1}} = 1.990, \qquad
    \mathrm{ICIS}^{(B)} = w_{02}^{(B)} = \uinner{\mathbf{x}_{0}^{(B)}}{Q_{2}} = \mathbf{0.006}.
  \end{gathered}
\end{equation*}

Based on the above analysis, it is easy to see that the small value of $\mathrm{ICIS}^{(B)} = 0.006$ causes the numerical problem in estimating $A$ in case (B).

\subsection{Identifiability for High-dimensional Systems}
\label{sec:ident-highdim}

Based on Theorem~\ref{thm:identifiability}, $A$ is identifiable at $\mathbf{x}_{0}$ if and only if $\mathbf{x}_{0}$ is not located in a proper invariant subspace of $A$.  Because there are only finitely many ($2^{K}-1$ of them, to be more precise) proper invariant subspaces of $A$, and each of them has dimension strictly less than $d$ (the ``proper'' part of the definition), the union of all proper invariant subspaces is only a zero-measure set of $\R^{d}$.  In this regard, as long as $A$ does not have repeated eigenvalues (which is true for almost every $A \in M_{d\times d}$), $A$ is mathematically identifiable at almost every $\mathbf{x}_{0} \in \R^{d}$.  This fact is probably the main reason why not many mathematicians have paid much attention to the identifiability problem of linear ODE systems.

However, as is shown in Example~\ref{example:2d-hard}, to have a reliable estimate of $A$ requires more than just a \emph{qualitative} statement that $\mathbf{x}_{0}$ does not lie in any proper invariant subspace of $A$.  We need to ensure that when we decompose $\mathbf{x}_{0}$ into a linear combination of components from $L_{k}$, each one of them has \emph{enough} information, so that we can reconstruct the corresponding sub-system on $L_{k}$ with noisy observations. This is the main motivation for us to propose ICIS, a \emph{quantitative} measure of identifiability.


Knowing that the collection of all proper invariant subspace has measure zero in $\R^{d}$, the readers may think that while practical identifiability issues do exist, they must be rare in practice.  Unfortunately, these issues are not that unusual when $d$ is \textbf{large}, in which case those practically identifiable systems are the exceptions instead. In Supplementary Text, Section~\ref{sec:highdim-ode-unidentifiable}, we proved that a large class of symmetric random ODE systems are practically unidentifiable when $d\to \infty$, as stated in the following theorem
\begin{theorem}\label{thm:GOE-like}
  Let us assume that:
  \begin{enumerate}[(a)]
  \item The system matrix $A \in M_{d\times d}$ is sampled from a symmetric, real-valued random matrix ensemble with probability measure $p(A)$ that is statistically invariant to orthogonal transformations, namely,
    \begin{equation}
      \label{eq:orthogonal-invariant}
      p(A) = p(TAT'), \qquad \forall T \in O(d).
    \end{equation}
  \item The initial condition $\mathbf{x}_{0} \in \R^{d}$ is sampled from a random distribution that is independent of $A$ and satisfies
    \begin{equation}
      \label{eq:x0-growth-cond}
      \lim_{d\to \infty} \frac{E |\mathbf{x}_{0}|^{2}}{d^{3}} = 0.
    \end{equation}
  \end{enumerate}

  Based on the above two assumptions, the ICIS converges to zero in $L^{2}$, namely,
  \begin{equation}
    \label{eq:w0star-converges-to-zero}
    E \left(w_{0}^{*}(A, \mathbf{x}_{0})\right)^{2} \longrightarrow 0, \quad \text{when $d \to \infty$.}
  \end{equation}
\end{theorem}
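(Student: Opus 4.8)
The plan is to reduce the assertion to a sharp bound on the smallest squared coordinate of a uniformly random unit vector. First I would use the symmetry of $A$: all its eigenvalues are real and, for the continuous orthogonally invariant ensembles of interest, almost surely distinct, so in the notation of Equation~\eqref{eq:jordan-decomp} we have $K_{2}=0$ and $K=K_{1}=d$, the decomposition $A=Q\Lambda Q^{-1}$ can be taken with $Q$ orthogonal, and the ICIS of Definition~\ref{def:ICIS} becomes $\wa=\min_{1\le k\le d}|(Q'\mathbf{x}_{0})_{k}|$. The orthogonal invariance~\eqref{eq:orthogonal-invariant} forces $Q$ to be Haar-distributed on $O(d)$: conditioning on the spectrum, the map $A\mapsto TAT'$ acts as $Q\mapsto TQ$, which pins the conditional law of $Q$ down to Haar measure (up to the irrelevant signs of the columns). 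Since $Q$ is a function of $A$, it is independent of $\mathbf{x}_{0}$, so, conditionally on $\mathbf{x}_{0}$, the vector $Q'\mathbf{x}_{0}$ has the law of $|\mathbf{x}_{0}|\,\mathbf{u}$ with $\mathbf{u}$ uniform on $S^{d-1}$ and independent of $|\mathbf{x}_{0}|$. Therefore
\[
  E\big(w_{0}^{*}(A,\mathbf{x}_{0})\big)^{2}=E|\mathbf{x}_{0}|^{2}\cdot E\!\left[\min_{1\le k\le d}u_{k}^{2}\right],
\]
and, in view of the growth condition~\eqref{eq:x0-growth-cond}, it is enough to prove $E[\min_{k}u_{k}^{2}]=O(d^{-3})$.

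For this I would pass to the Gaussian model $\mathbf{u}=\mathbf{g}/|\mathbf{g}|$ with $\mathbf{g}\sim N(0,I_{d})$, so that $\min_{k}u_{k}^{2}=(\min_{k}g_{k}^{2})/|\mathbf{g}|^{2}$. The coordinates $g_{k}^{2}$ are i.i.d.\ with $P(g_{k}^{2}\le t)\ge c\sqrt{t}$ for $0\le t\le 1$ (the half-normal density is bounded below near $0$), so $P(\min_{k}g_{k}^{2}>t)\le e^{-cd\sqrt{t}}$ on that range; splitting the integral $E[\min_{k}g_{k}^{2}]=\int_{0}^{\infty}P(\min_{k}g_{k}^{2}>t)\,dt$ at $t=1$ and bounding the tail by a Gaussian estimate then gives $E[\min_{k}g_{k}^{2}]=O(d^{-2})$. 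To divide by $|\mathbf{g}|^{2}$ I would condition on whether $|\mathbf{g}|^{2}\ge d/2$: on that event $\min_{k}u_{k}^{2}\le\tfrac{2}{d}\min_{k}g_{k}^{2}$, so this part contributes $O(d^{-3})$; on its complement, which has probability $e^{-\Theta(d)}$ by a $\chi^{2}$ concentration bound, I would use the crude bound $\min_{k}u_{k}^{2}\le u_{1}^{2}$ together with $E[u_{1}^{4}]=O(d^{-2})$ and Cauchy--Schwarz, so that part is $o(d^{-3})$. Summing the two pieces yields $E[\min_{k}u_{k}^{2}]=O(d^{-3})$, and substituting into the displayed identity gives $E(w_{0}^{*})^{2}=o(d^{3})\cdot O(d^{-3})\to 0$.

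I expect the extreme-value estimate of the preceding paragraph to be the main obstacle. The quantities $\min_{k}g_{k}^{2}$ and $|\mathbf{g}|^{2}$ are dependent, so one cannot simply multiply expectations, and the obvious bound $E[\min_{k}u_{k}^{2}]\le E[u_{1}^{2}]=1/d$ is two powers of $d$ too large; the event-splitting above is meant to repair this, but it requires checking that the lower bound $P(|g_{1}|\le\sqrt t)\ge c\sqrt t$ and the $\chi^{2}$ tail bound are applied with constants that do not depend on $d$. A smaller technical point is the Haar claim for $Q$: to allow orthogonally invariant ensembles that are not absolutely continuous, one needs either an additional regularity hypothesis excluding repeated eigenvalues with positive probability, or a short separate argument, since the ICIS of Definition~\ref{def:ICIS} is defined through the distinct-eigenvalue Jordan form of Equation~\eqref{eq:jordan-decomp}.
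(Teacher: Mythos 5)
Your proposal is correct, and its first half (symmetry forces $K_{2}=0$ and an orthogonal $Q$; orthogonal invariance forces $Q$ to be Haar on $O(d)$ up to column signs; independence then reduces everything to $E|\mathbf{x}_{0}|^{2}\cdot E[\min_{k}u_{k}^{2}]$ with $\mathbf{u}$ uniform on $S^{d-1}$) is essentially identical to the paper's reduction in Equations~\eqref{w0star-ortho-invariant}--\eqref{eq:w0star-ortho-invariant-to-diagonal}. Where you genuinely diverge is the key quantitative step. The paper proves a distributional limit, $\frac{2d^{3}}{\pi}S_{\min}\stackrel{w}{\longrightarrow}\mathrm{Weibull}(1,\frac{1}{2})$, via the Fisher--Tippett--Gnedenko theorem for $\min_{i}Z_{i}^{2}$ combined with $|Z|^{2}/d\to 1$ and Slutsky, and then passes from this weak convergence to the statement $E S_{\min}\approx C\cdot\frac{\pi r^{2}}{2d^{3}}$; that last passage requires a uniform-integrability argument that the paper elides behind ``$\approx$'' signs. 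You instead prove the non-asymptotic moment bound $E[\min_{k}u_{k}^{2}]=O(d^{-3})$ directly: the tail estimate $P(\min_{k}g_{k}^{2}>t)\leqslant e^{-cd\sqrt{t}}$ integrates to $E[\min_{k}g_{k}^{2}]=O(d^{-2})$, and the split on the event $\{|\mathbf{g}|^{2}\geqslant d/2\}$ (with a $\chi^{2}$ Chernoff bound and Cauchy--Schwarz on the complement) handles the dependence between $\min_{k}g_{k}^{2}$ and $|\mathbf{g}|^{2}$. This is more elementary and, for the $L^{2}$ statement actually claimed in Equation~\eqref{eq:w0star-converges-to-zero}, more rigorous than the paper's route; what you lose is the sharp limiting constant and the limiting Weibull law, which the paper's extreme-value argument delivers. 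Your closing caveat about ensembles with repeated eigenvalues occurring with positive probability is well taken, but it applies equally to the paper's own proof, which also works through the distinct-eigenvalue decomposition of Equation~\eqref{eq:jordan-decomp}.
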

\begin{proof}
  The proof is provided in Section~\ref{sec:highdim-ode-unidentifiable}, Supplementary Text.
\end{proof}

\begin{remark}
  \label{remark:GOE-assumptions}
  Perhaps the most well known random matrix ensemble that satisfies Assumption (a) is the Gaussian Orthogonal Ensemble (GOE, \cite{tao2012topics}). Many other ensembles also satisfy this condition, such as the Wishart ensemble, Jacobi orthogonal ensemble, etc. In fact, according to Weyl's lemma~\cite{livan2018introduction,weyl1946classical}, a random matrix ensemble is orthogonally invariant as long as its distribution function has the following trace representation
  \begin{equation}
    \label{eq:Weyl-lemma}
    p(H) = \phi \left( \tr H, \tr H^{2}, \dots, \tr H^{n} \right).
  \end{equation}

  Assumption (b) is a very weak condition that should be satisfied in almost all practical applications. If $\mathbf{x}_{0,i}$ has finite second order moments, and
  \begin{equation*}
    \sup_{i} Ex_{0,i}^{2} = \mu_{2} < \infty,
  \end{equation*}

  we have
  \begin{equation*}
    E|\mathbf{x}_{0}|^{2} := \sum_{i=1}^{d} Ex_{0,i}^{2} \leqslant d\cdot \mu_{2} = O(d^{1}), \qquad \frac{E |\mathbf{x}_{0}|^{2}}{d^{3}} = O(d^{-2}) \to 0.
  \end{equation*}

  In this case, $x_{0,i}$ do not have to be independent nor identically distributed. 
\end{remark}

We use the following simple and concrete example to illustrate the issue of practical identifiability described by Theorem~\ref{thm:GOE-like}.
\begin{example}\label{example:d100}
  Let $d=100$ and assume that $A$ is an arbitrary diagonal matrix in $M_{d\times d}$. By construction, all eigenvalues are real and $Q = I_{d}$, therefore $w_{0,i}=x_{0,i}$, $i=1,2,\dots, d$. Let $\mathbf{x}_{0} \sim N(0_{d}, I_{d})$, in other words, $x_{0,i}$ are generated from $i.i.d.$ $N(0,1)$. For simplicity, we write $r_{i} = |w_{0,i}|$. Apparently, $r_{i}$ are standard half normals with relatively large expectations $E r_{i} = \sqrt{\frac{2}{\pi}} \approx 0.8$. This fact seems to suggest that, as long as the measurement error is small ($\sigma \ll 0.8$), we would have enough information to reconstruct $A$.

  In reality, the \emph{smallest} member of $r_{i}$ (denoted by $r_{(1)}$), has a distribution that is statistically much smaller than a standard half normal. Using numerical integration, we found that $Er_{(1)} \approx 0.012$, which is \textbf{66 times smaller} than $E r_{i}$. Based on the lessons we learned from Example~\ref{example:2d-hard}, we anticipate that it is almost impossible to estimate $A$ accurately in this case. 

Finally, we conduct a mini-simulation to illustrate that ODEs with random \emph{asymmetric} system matrices also suffer from the identifiability issues stated in Theorem~\ref{thm:GOE-like}.  Specifically, we randomly generate 50 $A$ from the standard GinOE with $d=3,5,100$ dimensions, and pair them with 50 $\mathbf{x}_{0}$ sampled from $N(0_{d}, I_{d})$. We compute the ICIS for those $(A,\mathbf{x}_{0})$ and plot them in Figure~\ref{fig:Dimension-w0star}. We see that larger dimension is associated with smaller ICIS, which implies that these ODE systems are more difficult to be numerically identified.
\end{example}

\begin{figure}[ht]
  \centering
  \includegraphics[width=.7\textwidth]{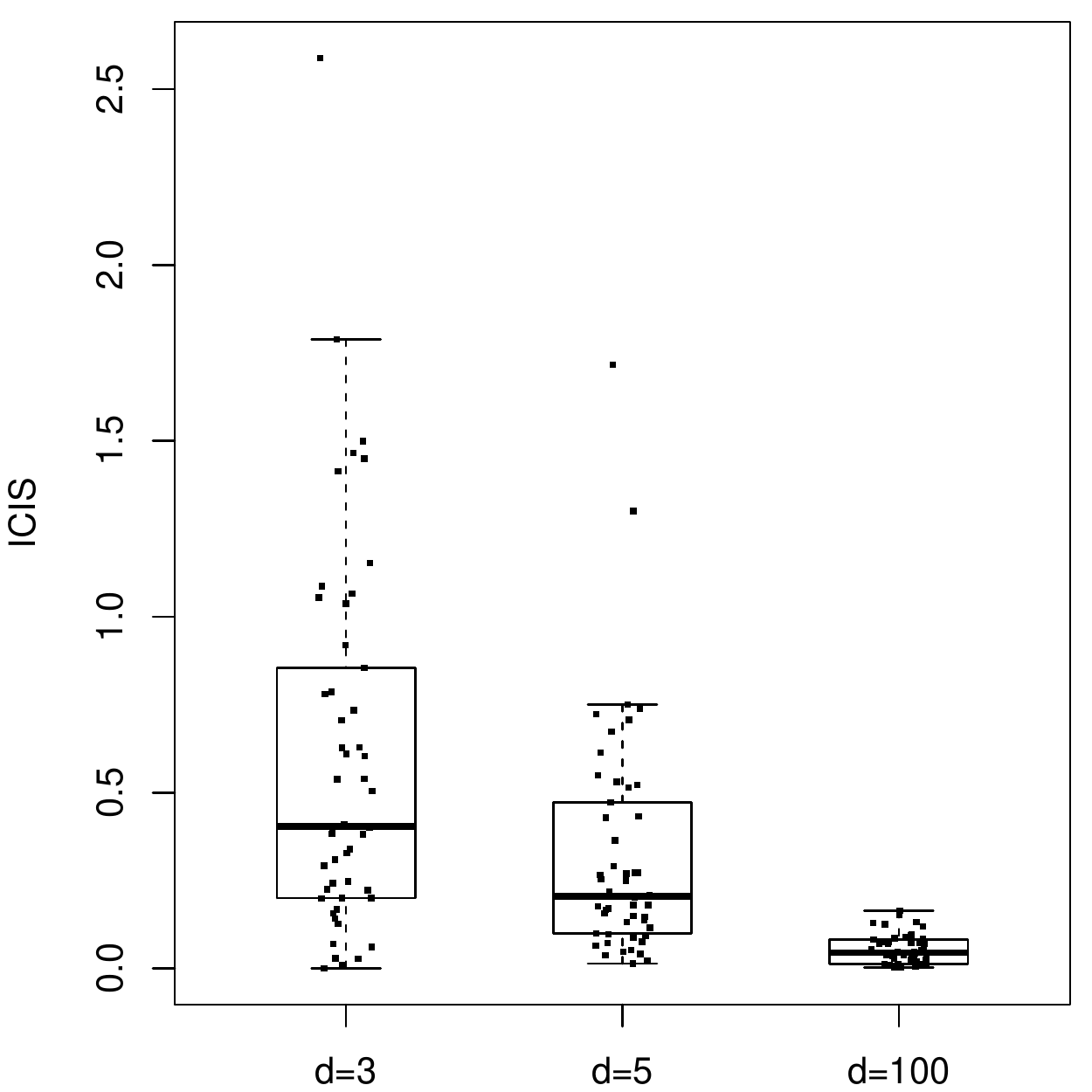}
  \caption{Larger dimension is associated with smaller ICIS ($\wa$), which implies that high-dimensional ODE systems are more difficult to be reconstructed numerically.}
  \label{fig:Dimension-w0star}
\end{figure}

In light of the above discussions, to design a well-behaved, identifiable high-dimensional ODE system in simulations, one needs to ensure that:
\begin{enumerate}
\item $A$ has nice mathematical properties, such as distinct eigenvalues; stability; no high-frequency components, etc.
\item $\mathbf{x}_{0}$ should not be ``randomly'' generated; instead, it should be generated in a way such that $\mathrm{ICIS}(A,\mathbf{x}_{0})$ is not too small.
\end{enumerate}

\subsection{Practical Identifiability Score (PIS)}
\label{sec:w-statistic}

Recall that ICIS does not depend on the full trajectory $\mathbf{x}(t)$, therefore it is most useful in designing simulation studies.  In real world applications, it is preferable to define identifiability scores that use the entirety of $Y$ (the discrete data measured at all time points) to quantify the practical identifiability of the system. Let $\hat{A}(Y)$ be an estimator of $A$ given the discrete observation. One way to quantify the practical identifiability is to use the numerical sensitivity of $\hat{A}$, which can be defined as the mean squared error, $\mathrm{MSE} := E\|\hat{A} - A\|_{F}^{2}$. Unfortunately, there are many different ways to estimate $A$, thus it is impossible to develop a \emph{universal} quantity that works for all estimators.  In this section, we propose two scores based on a class of the two-stage methods, under the assumption that $\sigma^{2}$ is small enough so that $\hat{A} - A$ is small. These proposed scores are compared to $\kappa(Y_{1})$, the practical identifiability measure proposed by Stanhope and colleagues in~\cite{stanhope2014identifiability} in our simulation studies.

\subsubsection{Stanhope's Condition Number}
\label{sec:stanhope-cond-num}

Stanhope and colleagues proposed to use $\kappa(Y_{1})$ as a practical identifiability measure in~\cite{stanhope2014identifiability}. Here $Y_{1} := [Y_{\cdot 1}| Y_{\cdot 2} | \dots | Y_{\cdot d}] \in M_{d\times d}$ is the matrix of discrete data evaluated at the first $d$ time points; $\kappa(Y_{1}) := \|Y_{1}\|_{F} \|Y_{1}^{-1}\|_{F}$ is the \emph{condition number} of $Y_{1}$, which is a quantitative measure of numerical stability of $Y_{1}^{-1}$.  It was motivated by the fact that if $Y_{1}$ is invertible and there is no noise in the discrete observations,
\begin{equation}
  \label{eq:stanhope-kappa-motivation}
  e^{\Delta t A} = Y_{2} Y_{1}^{-1}, \qquad Y_{2} := [Y_{\cdot 2}| Y_{\cdot 3} | \dots | Y_{\cdot d+1}] \in M_{d\times d}.
\end{equation}

\subsubsection{Functional Two-stage Methods}
\label{sec:two-stage-methods}


\begin{definition}[pairwise $L^{2}$-inner product matrix]
  Let $\mathbf{x}(t) \in \R^{d}$ and $\mathbf{y}(t) \in \R^{d'}$ be two multidimensional functions defined on $[0, T]$. We use notation $\uinner{\mathbf{x}(t)}{\mathbf{y}(t)}$ to refer to the pairwise $L^{2}$-inner product matrix between them, which is a $d\times d'$ matrix in which
  \begin{equation*}
    \uinner{\mathbf{x}(t)}{\mathbf{y}(t)}_{i,j} := \uinner{x_{i}(t)}{y_{j}(t)}_{L^{2}} = \int_{0}^{T} x_{i}(t)y_{j}(t)\ud t.
  \end{equation*}
\end{definition}

For convenience, we denote $\uinner{\mathbf{x}(t)}{\mathbf{x}(t)}$ by $\Sigma_{\mathbf{x}\mathbf{x}}$ when there is no confusion. Obviously, $\Sigma_{\mathbf{x}\mathbf{x}}$ is a symmetric positive semi-definite matrix. It is singular if and only if zero is one of its eigenvalues, in which case we know
\begin{equation}
  \label{eq:zero-eigenvector-for-pairwise-inner-prod}
  \mathbf{v}_{0}' \Sigma_{\mathbf{x}\mathbf{x}} \mathbf{v}_{0} = 0, \qquad \mathbf{v}_{0} \in \R^{d}, \quad \mathbf{v}_{0}\ne 0_{d}.
\end{equation}
where $\mathbf{v}_{0}$ is an eigenvector associated with the zero eigenvalue.

\begin{theorem}\label{thm:identifiability-based-on-pairwise-inprod}
  Let $\mathbf{x}(t)$ be an observed solution trajectory governed by ODE system $A$ and initiated at $\mathbf{x}_{0} := \mathbf{x}(0)$. This ODE system is identifiable at $\mathbf{x}_{0}$ if and only if the pairwise inner product matrix $\Sigma_{\mathbf{x}\mathbf{x}}$ is nonsingular (invertible).
\end{theorem}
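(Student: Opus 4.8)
The plan is to route everything through the geometric criterion already in hand. By the restatement of \cite[Theorem (3.4)]{stanhope2014identifiability} recorded after Definition~\ref{def:mini-ident}, the system $A$ is identifiable at $\mathbf{x}_{0}$ if and only if the trajectory $\mathbf{x}(t\,|\,A,\mathbf{x}_{0})$ is not contained in any proper invariant subspace of $A$. Hence it suffices to prove the equivalence: $\Sigma_{\mathbf{x}\mathbf{x}}$ is singular $\iff$ the trajectory lies in a proper invariant subspace of $A$.

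For the ``only if'' direction, suppose $\mathbf{x}(t) \in L$ for all $t \in [0,T]$, with $L \subsetneq \R^{d}$ a proper invariant subspace. Choose $\mathbf{v}_{0} \in L^{\perp}$, $\mathbf{v}_{0} \ne 0_{d}$. Then $\mathbf{v}_{0}'\mathbf{x}(t) = 0$ for every $t$, and a one-line computation gives $(\Sigma_{\mathbf{x}\mathbf{x}}\mathbf{v}_{0})_{i} = \uinner{x_{i}(t)}{\mathbf{v}_{0}'\mathbf{x}(t)} = 0$, so $\mathbf{v}_{0}$ is a nonzero kernel vector of $\Sigma_{\mathbf{x}\mathbf{x}}$ and the matrix is singular.

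For the converse, suppose $\Sigma_{\mathbf{x}\mathbf{x}}$ is singular. As in Equation~\eqref{eq:zero-eigenvector-for-pairwise-inner-prod}, there is a nonzero $\mathbf{v}_{0}$ with $\mathbf{v}_{0}'\Sigma_{\mathbf{x}\mathbf{x}}\mathbf{v}_{0} = \int_{0}^{T}(\mathbf{v}_{0}'\mathbf{x}(t))^{2}\,\ud t = 0$. Since $\mathbf{x}(t) = e^{tA}\mathbf{x}_{0}$ is continuous (indeed real-analytic) on $[0,T]$, the continuous scalar function $t \mapsto \mathbf{v}_{0}'\mathbf{x}(t)$ has vanishing $L^{2}$ norm and is therefore identically zero; the trajectory thus lies in the hyperplane $H := \mathbf{v}_{0}^{\perp} \subsetneq \R^{d}$. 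Now set $V := \mathrm{span}\{\mathbf{x}(t) : t \in [0,T]\}$. I claim $V$ is an invariant subspace of $A$: each difference quotient $(\mathbf{x}(t+h) - \mathbf{x}(t))/h$ lies in $V$, and $V$ is closed (finite-dimensional), so $A\mathbf{x}(t) = D\mathbf{x}(t) \in V$ for every $t$; by linearity $AV \subseteq V$. Since $V \subseteq H$, $V$ is a proper invariant subspace containing the entire trajectory, which closes the argument.

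The only genuinely nontrivial point is this last direction: singularity of $\Sigma_{\mathbf{x}\mathbf{x}}$ directly confines the trajectory only to a hyperplane, which need not itself be $A$-invariant, and the bridge is the observation that the span of a solution curve is automatically $A$-invariant because $A\mathbf{x} = D\mathbf{x}$. The remaining steps -- the kernel computation and the passage from $\int(\mathbf{v}_{0}'\mathbf{x})^{2} = 0$ to $\mathbf{v}_{0}'\mathbf{x} \equiv 0$ -- are routine. As an alternative, one could instead combine Theorem~\ref{thm:identifiability} with Lemma~\ref{lemma:equiv-invariant-subspace}, reducing to the statement that $\Sigma_{\mathbf{x}\mathbf{x}}$ is singular iff $\mathbf{x}_{0}$ lies in a proper invariant subspace of $A$, and using that $\mathbf{x}_{0} \in L$ (invariant) forces the whole trajectory into $L$ while conversely $\mathbf{x}_{0} = \mathbf{x}(0)$.
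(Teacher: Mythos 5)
Your proof is correct, but it takes a genuinely different route from the paper's. The paper reduces the statement to a claim about linear independence of the component functions $x_{1}(t),\dots,x_{d}(t)$: it first shows (Lemma~\ref{thm:func-indep-based-on-pairwise-inprod}) that $\Sigma_{\mathbf{x}\mathbf{x}}$ is invertible iff these functions are linearly independent, then writes $\mathbf{x}(t)=Q\,\mathrm{diag}(w_{0,i})\,(e^{c_{1}t},\dots,e^{c_{d}t})'$ and invokes a Wronskian/Vandermonde computation (Lemma~\ref{thm:building-blocks-indep}) to show that the fundamental solutions are linearly independent when the eigenvalues are distinct, so that everything hinges on the invertibility of $\mathrm{diag}(w_{0,i})$, i.e.\ on $\wa\ne 0$, which is Theorem~\ref{thm:identifiability}; that argument is written out only for all-real eigenvalues and leans on the standing distinct-eigenvalue assumption. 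You instead route through the geometric criterion of Stanhope et al.\ and prove directly that singularity of $\Sigma_{\mathbf{x}\mathbf{x}}$ is equivalent to the trajectory being trapped in a proper invariant subspace. The one step you rightly flag as nontrivial --- upgrading confinement to the hyperplane $\mathbf{v}_{0}^{\perp}$ (which need not be $A$-invariant) to confinement in an invariant subspace --- is handled cleanly by observing that $V=\mathrm{span}\{\mathbf{x}(t):t\in[0,T]\}$ is automatically $A$-invariant, since $A\mathbf{x}(t)=D\mathbf{x}(t)$ is a limit of difference quotients lying in the closed (finite-dimensional) subspace $V$. Your argument is shorter, avoids the Jordan decomposition entirely, and does not need the distinct-eigenvalue hypothesis or a separate treatment of complex eigenvalues; what the paper's route buys instead is the explicit correspondence between the singularity of $\Sigma_{\mathbf{x}\mathbf{x}}$ and the vanishing of the individual coordinates $w_{0,k}$, which feeds into the quantitative ICIS story. (One cosmetic slip: your ``only if''/``if'' labels for the two directions of the auxiliary equivalence are swapped, but both implications are in fact proved.)
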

\begin{proof}
  See Section~\ref{sec:mathematical-proofs}, Supplementary Text.
\end{proof}

Notice that the ODE $D\mathbf{x}(t) = A\mathbf{x}(t)$ implies
\begin{equation}
  \label{eq:two-stage-motivation}
  \uinner{D\mathbf{x}(t)}{\mathbf{x}(t)} = A \uinner{\mathbf{x}(t)}{\mathbf{x}(t)} \Longrightarrow A = \Sigma_{D\mathbf{x}, \mathbf{x}} \cdot \Sigma_{\mathbf{x}\mathbf{x}}^{-1}.
\end{equation}

Here $\Sigma_{D\mathbf{x}, \mathbf{x}} := \uinner{D\mathbf{x}(t)}{\mathbf{x}(t)}$ is the pairwise inner product matrix between $D\mathbf{x}(t)$ and $\mathbf{x}(t)$. This fact motivated the following two-stage methods, which is a class of estimators of $A$:
\begin{equation}
  \label{eq:two-stage-est}
  \hat{A} := \hat{\Sigma}_{D\mathbf{x}, \mathbf{x}} \cdot  \hat{\Sigma}_{\mathbf{x}\mathbf{x}}^{-1}.
\end{equation}

Here $\hat{\Sigma}_{D\mathbf{x}, \mathbf{x}}$ and $\hat{\Sigma}_{\mathbf{x}\mathbf{x}}$ are estimates of $\Sigma_{D\mathbf{x}, \mathbf{x}}$ and $\Sigma_{\mathbf{x}\mathbf{x}}$, respectively. There are many choices of these estimators, some of which include tuning parameter(s).  For example, one can use discretizing techniques and finite differences to estimate them. We call this approach the \emph{simple two-stage method}. Alternatively, we could use roughness penalized basis splines to estimate $\hat{\mathbf{x}}(t)$, then apply the differential operator and integral operator to estimate those terms. We call the latter approach the \emph{functional two-stage method}. These two methods are described in detail in Section~\ref{sec:more-2stage}, Supplementary Text. In either case, $\hat{\Sigma}_{D\mathbf{x}, \mathbf{x}}$ and $\hat{\Sigma}_{\mathbf{x}\mathbf{x}}$ could be represented by the following matrix operations
\begin{equation}
  \label{eq:est-inprod-matrices}
  \hat{\Sigma}_{\mathbf{x}\mathbf{x}} = Y S Y', \quad \hat{\Sigma}_{D\mathbf{x}, \mathbf{x}} = Y L Y', \quad \hat{A} = YLY' (YSY')^{-1}.
\end{equation}
Here $S$ and $L$ were two $(n\times n)$-dimensional matrices obtained from the particular estimating procedure, such as the smoothing step in the functional two-stage methods.




The following two theorems state that: a) when there is no error in estimating $\mathbf{x}(t)$, $\hat{A}$ defined in Equation~\eqref{eq:two-stage-est} is \textbf{exact}, and b) small errors in estimating $\mathbf{x}(t)$ and $D\mathbf{x}(t)$ only induce a small error in $\hat{A}$ for an identifiable $(A,\mathbf{x}_{0})$.

\begin{theorem}\label{thm:functional-two-stage-perfect}
Assume that a linear ODE system with constant coefficient is identifiable at $\mathbf{x}_{0}$. A matrix $A \in M_{d\times d}$ is its system matrix if and only if it satisfies
  \begin{equation}
    \label{eq:two-stage-equation}
    A = \Sigma_{D\mathbf{x}, \mathbf{x}} \cdot \Sigma_{\mathbf{x}\mathbf{x}}^{-1}.
  \end{equation}
\end{theorem}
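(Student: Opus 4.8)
The plan is to prove both directions of the equivalence by passing from the pointwise ODE identity $D\mathbf{x}(t) = A\mathbf{x}(t)$ to its integrated (inner-product) form, and then invoking Theorem~\ref{thm:identifiability-based-on-pairwise-inprod}, which identifies $(A,\mathbf{x}_{0})$-identifiability with the invertibility of $\Sigma_{\mathbf{x}\mathbf{x}}$.

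For the forward implication, suppose $A$ is the system matrix, so $D\mathbf{x}(t) = A\mathbf{x}(t)$ for every $t\in[0,T]$. Since $\mathbf{x}(t)$ and $D\mathbf{x}(t)$ are continuous on the compact interval $[0,T]$, every $L^{2}$-inner product in play is finite, and I may take the $L^{2}$-inner product of both sides with each coordinate $x_{j}(t)$. Collecting these $d^{2}$ scalar identities into matrix form gives $\uinner{D\mathbf{x}(t)}{\mathbf{x}(t)} = A\uinner{\mathbf{x}(t)}{\mathbf{x}(t)}$, i.e. $\Sigma_{D\mathbf{x},\mathbf{x}} = A\,\Sigma_{\mathbf{x}\mathbf{x}}$, which is the left half of Equation~\eqref{eq:two-stage-motivation}. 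Because the system is identifiable at $\mathbf{x}_{0}$, Theorem~\ref{thm:identifiability-based-on-pairwise-inprod} guarantees $\Sigma_{\mathbf{x}\mathbf{x}}$ is nonsingular; right-multiplying by $\Sigma_{\mathbf{x}\mathbf{x}}^{-1}$ yields $A = \Sigma_{D\mathbf{x},\mathbf{x}}\,\Sigma_{\mathbf{x}\mathbf{x}}^{-1}$.

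For the converse, let $A^{\ast}$ denote the true system matrix of the observed trajectory, which exists by hypothesis. Applying the forward implication to $A^{\ast}$ shows $A^{\ast} = \Sigma_{D\mathbf{x},\mathbf{x}}\,\Sigma_{\mathbf{x}\mathbf{x}}^{-1}$. Since $\Sigma_{\mathbf{x}\mathbf{x}}$ is invertible, the right-hand side is a single, well-defined matrix, so any $A$ with $A = \Sigma_{D\mathbf{x},\mathbf{x}}\,\Sigma_{\mathbf{x}\mathbf{x}}^{-1}$ must equal $A^{\ast}$ and is therefore the system matrix. As a by-product this re-derives the uniqueness of the system matrix for an $(A,\mathbf{x}_{0})$-identifiable system, consistent with $[A]_{\mathbf{x}_{0}}$ being a singleton.

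The argument is short, and the only step warranting care is the passage from the pointwise identity to the integrated one, namely $\uinner{D\mathbf{x}}{\mathbf{x}} = \uinner{A\mathbf{x}}{\mathbf{x}} = A\uinner{\mathbf{x}}{\mathbf{x}}$; this is just linearity of the integral together with finiteness of the integrands on $[0,T]$, so no genuine obstacle arises. All the substantive content is carried by Theorem~\ref{thm:identifiability-based-on-pairwise-inprod}.
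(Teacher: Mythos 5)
Your proof is correct and follows essentially the same route as the paper's: the forward direction is the identical bilinearity computation $\Sigma_{D\mathbf{x},\mathbf{x}} = A\Sigma_{\mathbf{x}\mathbf{x}}$ followed by inverting $\Sigma_{\mathbf{x}\mathbf{x}}$, whose invertibility comes from Theorem~\ref{thm:identifiability-based-on-pairwise-inprod}. Your converse is phrased slightly differently (uniqueness of the right-hand side plus the forward implication applied to the true system matrix, rather than re-invoking identifiability to assert the system matrix is unique), but this is the same argument in substance.
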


\begin{theorem}\label{thm:functional-two-stage-continuous}
  Let $\hat{\mathbf{x}}(t)$ and $D\hat{\mathbf{x}}(t)$ be the estimates of the solution trajectory and its derivative used in Equation~\eqref{eq:two-stage-est}. Let $\delta_{1} := \|\hat{\mathbf{x}}(t) - \mathbf{x}(t)\|_{L^{2}}$ and $\delta_{2} := \|D\hat{\mathbf{x}}(t) - D\mathbf{x}(t)\|_{L^{2}}$ be the estimation errors measured in $L^{2}$ norm of $\hat{\mathbf{x}}(t)$ and $D\hat{\mathbf{x}}(t)$, respectively; and define $\delta := \max(\delta_{1}, \delta_{2})$.  We have
  \begin{equation}
    \label{eq:small-L2-error-induce-small-MSE}
    \|\hat{A} - A\|_{F} \leqslant C\delta.
  \end{equation}
  Here $C$ is a multiplicative constant that depends on $\|\hat{\mathbf{x}}(t)\|$, $\|D\hat{\mathbf{x}}(t)\|$, and the condition number of $\Sigma_{\mathbf{x}\mathbf{x}}$.
\end{theorem}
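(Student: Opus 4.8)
The plan is to treat $\hat A - A$ as a perturbation of the exact identity $A = \Sigma_{D\mathbf{x},\mathbf{x}}\Sigma_{\mathbf{x}\mathbf{x}}^{-1}$ (Theorem~\ref{thm:functional-two-stage-perfect}) and bound it via the standard ``perturbation of a matrix product and of an inverse'' argument, carried out in the $L^2$/Frobenius setting. First I would record that the bilinearity and Cauchy--Schwarz inequality for the pairwise $L^2$-inner product give, for the estimation errors, bounds of the form $\|\hat\Sigma_{\mathbf{x}\mathbf{x}} - \Sigma_{\mathbf{x}\mathbf{x}}\|_F \le (\|\hat{\mathbf{x}}\| + \|\mathbf{x}\|)\,\delta_1$ and $\|\hat\Sigma_{D\mathbf{x},\mathbf{x}} - \Sigma_{D\mathbf{x},\mathbf{x}}\|_F \le \|D\hat{\mathbf{x}}\|\,\delta_1 + \|\mathbf{x}\|\,\delta_2 \le C_1\delta$, where each $C_1$ depends only on the stated norms. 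Here I use the decomposition $\hat\Sigma_{\mathbf{x}\mathbf{x}} - \Sigma_{\mathbf{x}\mathbf{x}} = \uinner{\hat{\mathbf{x}} - \mathbf{x}}{\hat{\mathbf{x}}} + \uinner{\mathbf{x}}{\hat{\mathbf{x}} - \mathbf{x}}$ and the analogous splitting for the $D\mathbf{x}$ term.

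Next I would invoke Theorem~\ref{thm:identifiability-based-on-pairwise-inprod}: since $(A,\mathbf{x}_0)$ is identifiable, $\Sigma_{\mathbf{x}\mathbf{x}}$ is invertible, so its smallest singular value $\sigma_{\min} > 0$ is fixed. A standard lemma on perturbation of the inverse then yields, provided $\delta$ is small enough that $\|\hat\Sigma_{\mathbf{x}\mathbf{x}} - \Sigma_{\mathbf{x}\mathbf{x}}\|_F \le \tfrac12\sigma_{\min}$, the bound $\|\hat\Sigma_{\mathbf{x}\mathbf{x}}^{-1} - \Sigma_{\mathbf{x}\mathbf{x}}^{-1}\|_F \le 2\sigma_{\min}^{-2}\,\|\hat\Sigma_{\mathbf{x}\mathbf{x}} - \Sigma_{\mathbf{x}\mathbf{x}}\|_F$, and in particular $\|\hat\Sigma_{\mathbf{x}\mathbf{x}}^{-1}\|_F \le 2\sigma_{\min}^{-1}$. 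Writing $\hat A - A = \hat\Sigma_{D\mathbf{x},\mathbf{x}}\hat\Sigma_{\mathbf{x}\mathbf{x}}^{-1} - \Sigma_{D\mathbf{x},\mathbf{x}}\Sigma_{\mathbf{x}\mathbf{x}}^{-1}$ and inserting $\pm\,\hat\Sigma_{D\mathbf{x},\mathbf{x}}\Sigma_{\mathbf{x}\mathbf{x}}^{-1}$, I split this into $(\hat\Sigma_{D\mathbf{x},\mathbf{x}} - \Sigma_{D\mathbf{x},\mathbf{x}})\hat\Sigma_{\mathbf{x}\mathbf{x}}^{-1} + \Sigma_{D\mathbf{x},\mathbf{x}}(\hat\Sigma_{\mathbf{x}\mathbf{x}}^{-1} - \Sigma_{\mathbf{x}\mathbf{x}}^{-1})$; submultiplicativity of the Frobenius norm and the bounds above give $\|\hat A - A\|_F \le C_1\delta\cdot 2\sigma_{\min}^{-1} + \|\Sigma_{D\mathbf{x},\mathbf{x}}\|_F\cdot 2\sigma_{\min}^{-2}C_1\delta =: C\delta$, which is the claim, with $C$ explicitly a function of $\|\hat{\mathbf{x}}\|$, $\|D\hat{\mathbf{x}}\|$, $\|\mathbf{x}\|$, $\|D\mathbf{x}\|$ and $\kappa(\Sigma_{\mathbf{x}\mathbf{x}})$ (noting $\|\mathbf{x}\|, \|D\mathbf{x}\|$ are themselves controlled by $\|\hat{\mathbf{x}}\| + \delta$, $\|D\hat{\mathbf{x}}\| + \delta$).

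The main obstacle is not any single estimate but the bookkeeping of the ``smallness'' hypothesis: the constant $C$ is only valid once $\delta$ is below a threshold determined by $\sigma_{\min}$, so I should either state the inequality as holding for $\delta$ small enough (consistent with the paper's standing assumption that $\sigma^2$ is small so $\hat A - A$ is small), or absorb the threshold into $C$ at the cost of a cruder constant. A secondary subtlety is that $\|\mathbf{x}(t)\|$ and $\|D\mathbf{x}(t)\|$ appear on the right but the theorem statement advertises dependence on $\|\hat{\mathbf{x}}(t)\|$ and $\|D\hat{\mathbf{x}}(t)\|$; this is reconciled by the triangle inequality $\|\mathbf{x}\| \le \|\hat{\mathbf{x}}\| + \delta_1$, so for small $\delta$ the two are interchangeable up to the constant. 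I would also remark that the representation \eqref{eq:est-inprod-matrices} is not needed here — the argument is purely at the level of the $L^2$-inner-product matrices — but it confirms that $\hat\Sigma_{\mathbf{x}\mathbf{x}}$, $\hat\Sigma_{D\mathbf{x},\mathbf{x}}$ are genuine Gram-type matrices so the perturbation lemma applies.
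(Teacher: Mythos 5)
Your proposal follows essentially the same route as the paper's proof: bound $\|\hat\Sigma_{\mathbf{x}\mathbf{x}}-\Sigma_{\mathbf{x}\mathbf{x}}\|_{F}$ and $\|\hat\Sigma_{D\mathbf{x},\mathbf{x}}-\Sigma_{D\mathbf{x},\mathbf{x}}\|_{F}$ by $O(\delta)$ via bilinearity and Cauchy--Schwarz, then use Theorem~\ref{thm:identifiability-based-on-pairwise-inprod} to get invertibility of $\Sigma_{\mathbf{x}\mathbf{x}}$ and conclude by continuity of the map $(\Sigma_{D\mathbf{x},\mathbf{x}},\Sigma_{\mathbf{x}\mathbf{x}})\mapsto \Sigma_{D\mathbf{x},\mathbf{x}}\Sigma_{\mathbf{x}\mathbf{x}}^{-1}$. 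You are in fact more explicit than the paper at the final step (the perturbation-of-the-inverse bound and the smallness threshold on $\delta$, which the paper leaves implicit), but this is a refinement of the same argument, not a different one.
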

\begin{proof}
  The proofs of the above two theorems are provided in Section~\ref{sec:mathematical-proofs}, Supplementary Text.
\end{proof}

It is well known that, with reasonable knot placement and design points ($t_{j}$s), $\hat{x}(t)$ and $D\hat{x}(t)$ obtained by roughness penalized smoothing splines converge to $\mathbf{x}(t)$ and $D\mathbf{x}(t)$ in $L^{2}$ norm. Given Theorem~\ref{thm:functional-two-stage-perfect}, it is reasonable to assume that $\hat{A} - A$ is small in our subsequent analyses.

\subsubsection{Smoothed Condition Number}
\label{sec:smooth-cond-numb}

We first propose a straightforward generalization of Stanhope's condition number, called the smoothed condition number (SCN), to measure practical identifiability:
\begin{equation}
  \label{eq:tau-def}
  \tau(Y, S) := \kappa(\hat{\Sigma}_{\mathbf{x}\mathbf{x}}) = \kappa(YSY').
\end{equation}

Apparently, Equation~\eqref{eq:two-stage-est} is the main motivation of this generalization. Compared with Stanhope's $\kappa$ statistic, SCN incorporates the information contained in the smoothing operator $S$, therefore captures more information of the parameter estimation procedure.

\subsubsection{Practical Identifiability Score}
\label{sec:pis}

Based on Equation~\eqref{eq:est-inprod-matrices}, a small perturbation of data could induce a small $\|\hat{A} - A\|_{F}^{2}$. To conduct a formal sensitivity analysis, we need to make the following additional assumptions:
\begin{enumerate}
\item Measurement errors are uncorrelated: $\mathrm{cor}(\epsilon_{ij}, \epsilon_{i'j'}) = 0$ if $i\ne i'$ or $j\ne j'$.
\item These errors are relatively small, namely, $\mathrm{var}(\epsilon_{ij}) \leqslant \sigma^{2} \ll 1$ for all $i,j$.
\item $\|\hat{A}(X) - A\|_{F}^{2} \ll \|\hat{A}(Y) - A\|_{F}^{2}$, namely, the \emph{numerical error} in $\hat{A}$ due to the use of discrete data is much smaller than the \emph{variance} of $\hat{A}$ caused by measurement error. This assumption can also be expressed as $A \approx XLX'(XSX')^{-1}$, which is a reasonable assumption for cases in which $n$ is large based on Theorem~\ref{thm:functional-two-stage-perfect}.
\end{enumerate}

Denote $\bm{\epsilon}_{S} = \bm{\epsilon}SX' +XS\bm{\epsilon}'$, $\bm{\epsilon}_{L} = \bm{\epsilon}LX' +XL\bm{\epsilon}'$, and $N = \left( X S X' \right)^{-1}$. Based on the above assumptions, we have
\begin{equation}
  \label{eq:Ahat-approx1}
  \begin{split}
    \hat{A}(X+\bm{\epsilon}) -A &= (X+\bm{\epsilon})L(X+\bm{\epsilon})' \left( (X+\bm{\epsilon})S(X+\bm{\epsilon})' \right)^{-1} -A \\
    &\approx (XLX' +\bm{\epsilon}LX' +XL\bm{\epsilon}') \left( XSX' +\bm{\epsilon}SX' +XS\bm{\epsilon}' \right)^{-1} -A \\
    &\approx (XLX' +\bm{\epsilon}_{L}) \left( N -N \bm{\epsilon}_{S} N \right) -A \\
    &\approx -XLX'N(\bm{\epsilon}_{S})N +\bm{\epsilon}_{L}N \\
    &\approx (\bm{\epsilon}_{L} -A\bm{\epsilon}_{S})N.
  \end{split}
\end{equation}

Using Proposition~\ref{thm:mean-random-matrix}, we have
\begin{equation}
  \label{eq:MSE-Ahat-aux1}
  \small
  \begin{gathered}
    \begin{split}
      E (\bm{\epsilon}_{L}'\bm{\epsilon}_{L}) &= E ((\bm{\epsilon}L'X' +XL'\bm{\epsilon}') (\bm{\epsilon}LX' +XL\bm{\epsilon}')) \\
      &= E\left( \bm{\epsilon}L'X'\bm{\epsilon}LX' + \bm{\epsilon}L'X'XL\bm{\epsilon}' +XL'\bm{\epsilon}'\bm{\epsilon}LX' +XL'\bm{\epsilon}'XL\bm{\epsilon}' \right) \\
      &= \sigma^{2} \left(XL^{2}X' + \mathrm{tr}(L'X'XL)\cdot I_{d} + d\cdot XL'LX' +X(L')^{2}X'\right). \\
    \end{split} \\
    \begin{split}
      E \left( \bm{\epsilon}_{S}' A'\bm{\epsilon}_{L}  \right) &= E ((\bm{\epsilon}S'X' +XS'\bm{\epsilon}') A' (\bm{\epsilon}LX' +XL\bm{\epsilon}')) \\
      &= E \left( \bm{\epsilon}S'X'A'\bm{\epsilon}LX' +XS'\bm{\epsilon}'A'\bm{\epsilon}LX' +\bm{\epsilon} S'X'A'XL\bm{\epsilon}' +XS'\bm{\epsilon}'A'XL\bm{\epsilon}' \right) \\
      &\leqslant \sigma^{2}\left( AXSLX' + \mathrm{tr}(A)\cdot XS'LX' +\mathrm{tr}(S'X'A'XL)\cdot I_{d} +XS'L'X'A \right).
    \end{split} \\
    \begin{split}
      E\left( \bm{\epsilon}_{S}'A'A\bm{\epsilon}_{S} \right) &= E ((\bm{\epsilon}S'X' +XS'\bm{\epsilon}') A'A (\bm{\epsilon}SX' +XS\bm{\epsilon}')) \\
      &= E \big( \bm{\epsilon}S'X'A'A \bm{\epsilon} SX' +XS'\bm{\epsilon}'A'A \bm{\epsilon}SX' \\
      &\quad +\bm{\epsilon} S'X'A'AXS\bm{\epsilon}' +XS'\bm{\epsilon}'A'AXS\bm{\epsilon}' \big) \\
      &\leqslant \sigma^{2} \big(A'AXS^{2}X' +\mathrm{tr}(A'A)\cdot XS'SX' \\
      &\quad \mathrm{tr}\left( S'X'A'AXS \right)\cdot I_{d} +X (S')^{2} X'A'A \big).
    \end{split}
  \end{gathered}
\end{equation}

Therefore
\begin{equation}
  \label{eq:dMSE-Ahat}
  \begin{split}
    \|\hat{A}(X+\bm{\epsilon}) - A\|_{F}^{2} &\approx \mathrm{tr}\left( (\bm{\epsilon}_{L} -A\bm{\epsilon}_{S})N^{2} (\bm{\epsilon}_{L}' -\bm{\epsilon}_{S}'A') \right) \\
    &= \mathrm{tr}\left( (\bm{\epsilon}_{L}' -\bm{\epsilon}_{S}'A')  (\bm{\epsilon}_{L} -A\bm{\epsilon}_{S})N^{2} \right) \\
    &= \mathrm{tr}\left( (\bm{\epsilon}_{L}' \bm{\epsilon}_{L} -\bm{\epsilon}_{S}' A'\bm{\epsilon}_{L} -\bm{\epsilon}_{L}' A\bm{\epsilon}_{S} +\bm{\epsilon}_{S}'A'A\bm{\epsilon}_{S})N^{2} \right) \\
    E \|\hat{A}(X+\bm{\epsilon}) - A\|_{F}^{2} &\approx \mathrm{tr} \big( ( E(\bm{\epsilon}_{L}'\bm{\epsilon}_{L}) -2E \left( \bm{\epsilon}_{S}' A'\bm{\epsilon}_{L}  \right) +E\left( \bm{\epsilon}_{S}'A'A\bm{\epsilon}_{S} \right))N^{2} \big) \\
    &\leqslant \sigma^{2} W(X). \\
  \end{split}
\end{equation}

Here
\begin{equation}
  \label{eq:Wfun-def}
  \begin{split}
    W(X|A,S,L) &:= \mathrm{tr}\big((XSX')^{-2} \big( XL^{2}X' +d\cdot XL'LX' +X(L')^{2}X' \\
    &\quad -2AXSLX' -2\mathrm{tr}(A)\cdot XS'LX' -2XS'L'X'A \\
    &\quad +A'AXS^{2}X' +\mathrm{tr}(A'A)\cdot XS'SX' +X (S')^{2} X'A'A) \\
    &\quad + \mathrm{tr}(L'X'XL -2S'X'A'XL +S'X'A'AXS)\cdot I_{d} \big) \\
    &= \mathrm{vec}((XS'X')^{-2})' \mathrm{vec}\big( XL^{2}X' +d\cdot XL'LX' +X(L')^{2}X' \\
    &\quad -2AXSLX' -2\mathrm{tr}(A)\cdot XS'LX' -2XS'L'X'A \\
    &\quad +A'AXS^{2}X' +\mathrm{tr}(A'A)\cdot XS'SX' +X (S')^{2} X'A'A \big) \\
    &\quad +\mathrm{tr}(L'X'XL -2S'X'A'XL +S'X'A'AXS) \mathrm{tr}(N^{2}).
  \end{split}
\end{equation}

By construction, $W(X)$ is a scalar that quantifies the MSE of $\hat{A}$ as a function of $\sigma^{2}$, the maximum variance of $\epsilon_{ij}$ for all $i=1,2,\dots, d$ and $j=1,2,\dots, n$.  Smaller values of $W(X)$ imply better practical identifiability in reconstructing $A$. Motivated by this fact, we define the practical identifiability score (PIS) as the sample version of $W(X)$. Specifically, PIS (denoted as $w^{*}$ in Equation~\eqref{eq:wstar-def}) is computed by replacing $X$ and $A$ in Equation~\eqref{eq:Wfun-def} with $Y$ and $\hat{A} := YLY' (YSY')^{-1}$, respectively:
\begin{equation}
  \label{eq:wstar-def}
  \wb(Y|S,L) := W(Y|\hat{A},S,L).
\end{equation}

Compared with Stanhope's $\kappa$ and SCN, PIS depends not only on the observed data ($Y$), but also the $S$ and $L$ matrix of the particular two-stage method used in reconstructing $\hat{A}$, therefore it is a more accurate indicator of practical identifiability. A simulation study was designed to demonstrate this point in Section~\ref{sec:wstar-sim2}.


\section{Simulation studies}
\label{sec:simulation-studies}


\subsection{ICIS is Inversely Correlated with the  Relative Estimation Error (REE)}
\label{sec:w0-sim1}

\begin{figure}
  \includegraphics[width=\textwidth]{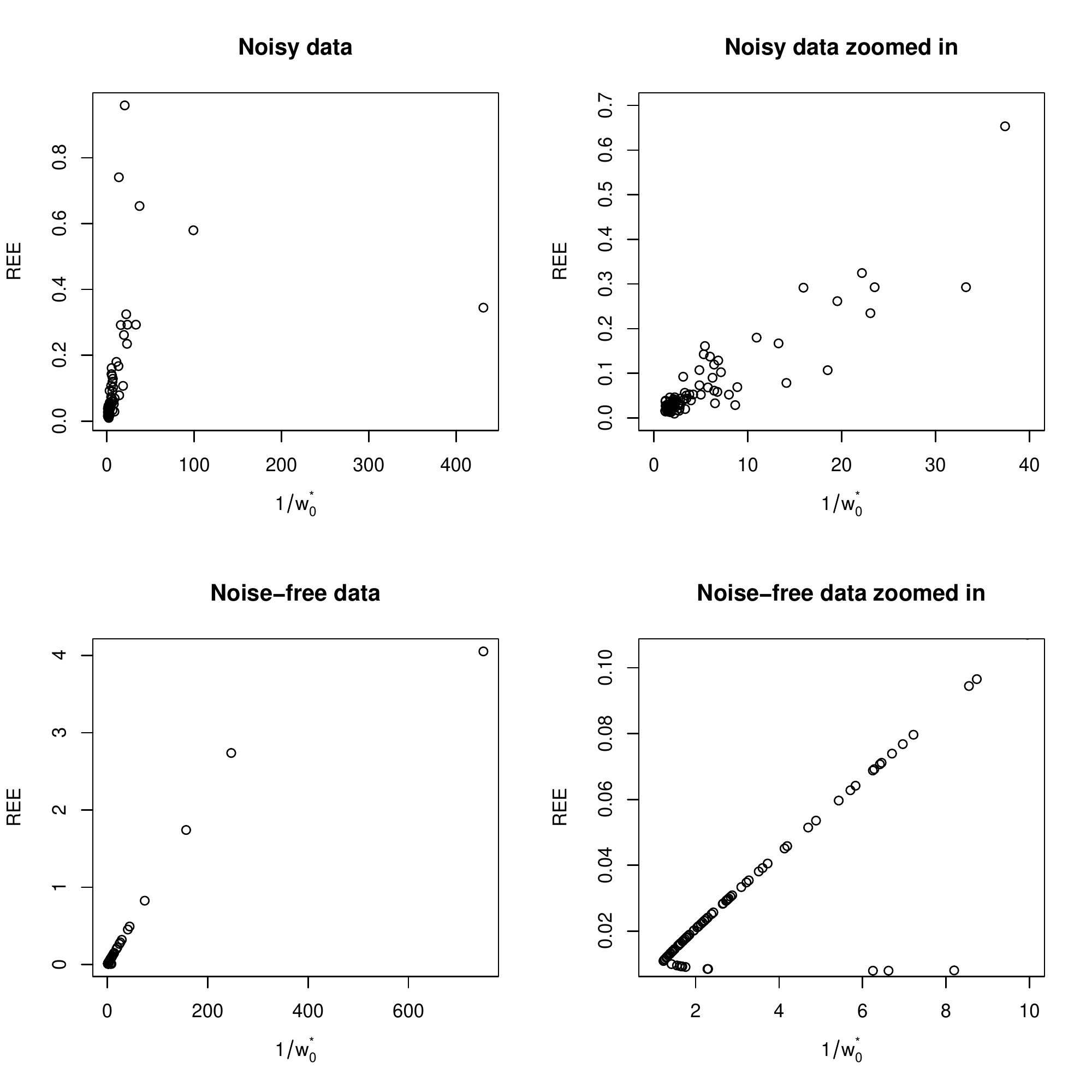}
  \caption{ICIS ($\wa$) is inversely correlated with the relative estimation error (REE). In all four subfigures, the $x$-axis is $1/\wa$, $y$-axis is REE. Each dot represents one of the 100 repetitions of SIM1. Top two subfigures are generated from noisy data ($Y$); bottom two subfigures uses noise-free data ($X$). The right subfigures are the zoomed-in version of the left subfigures. }
  \label{fig:SIM1-REE-w0star}
\end{figure}

We design SIM1 to demonstrate that the ICIS is inversely correlated with estimation error, as predicted in Section~\ref{sec:ident-highdim}.  In this simulation, the system matrix and its two invariant subspaces are:
\begin{equation*}
  A =
  \begin{pmatrix}
    -0.1 & 3 & 0 \\
    -3 & -0.1 & 0 \\
    0 & 0 & -0.5
  \end{pmatrix}, \qquad L_{1} = \mathrm{span}\left( \mathbf{e}_{1}, \mathbf{e}_{2} \right), \quad L_{2} = \mathrm{span}\left( \mathbf{e}_{3} \right).
\end{equation*}

Here $\mathbf{e}_{k}$ is the $k$th column vector of $I_{3}$, a.k.a. the $k$th natural basis vector of $\R^{3}$.  We generate $\mathbf{x}_{0}$ from $N(0_{3}, I_{3})$ first, then standardize it to have unit length to reduce the variation in ICIS due to different $|\mathbf{x}_{0}|$. This is equivalent to sampling $\mathbf{x}_{0}$ from a uniform distribution on $S^{2}$.

Once $\mathbf{x}_{0}$ is generated, we compute $X_{\cdot j} := e^{t_{j}A}\mathbf{x}_{0}$ at a time grid with range $[0, 6]$ and step $\Delta t=0.1$, i.e., $t_{j} = 0, 0.1, \dots, 6$ for $j=1,\dots, 61$. We also add a small noise $\epsilon_{ij} \sim N(0, \sigma^{2})$, $\sigma=0.05$ to each observation to create noisy data $Y = X + \bm{\epsilon}$.  A functional two-stage method based on cubic splines with roughness penalty $\lambda=0.001$ is used to estimate $\hat{A}$ from both noisy ($Y$) and noise-free ($X$) data.

The accuracy of estimation is measured by relative estimation error (REE),  defined as follows
\begin{equation}
  \label{eq:REE-def}
  \mathrm{REE}(\hat{A}, A) := \frac{\|\hat{A}-A\|_{F}}{\|A\|_{F}}.
\end{equation}

We repeat SIM1 for 100 times, each with randomly generated $\mathbf{x}_{0}$ and measurement error. We find that ICIS ($\wa$) is strongly negatively correlated with REE. The Spearman correlation between these two quantities is $\rho_{1}= -0.803$ for the noisy data and $\rho_{2}=-0.843$ for the noise-free data.  This inverse correlation is visualized in Figure~\ref{fig:SIM1-REE-w0star}. Other than a few outliers, $1/\wa$ has an almost perfect linear relationship with $\mathrm{REE}$ in the noise-free case (the second row of Figure~\ref{fig:SIM1-REE-w0star}).  The correlation between $1/\wa$ and $\mathrm{REE}$ is weaker but still quite apparent for the noisy data (the first row of Figure~\ref{fig:SIM1-REE-w0star}).

\subsection{Using SCN and PIS to Classify Identifiable and Un-identifiable Systems}
\label{sec:wstar-sim2}

We design SIM2 to demonstrate that, when data collected at all time points are available, SCN and PIS have better performance in classifying identifiable and un-identifiable Systems than ICIS and Stanhope's $\kappa$. SIM2 contains one identifiable case and two unidentifiable cases, which are described as follows.
\begin{enumerate}
\item In each one of 200 repetitions, we generate two $4\times 4$-dimensional system matrices $A$ and $B$, and two initial conditions $\mathbf{x}_{0}^{(a)}$ and $\mathbf{x}_{0}^{(b)}$ on $S^{3}$.
\item Both $A$ and $B$ have one pair of complex eigenvalues and two real eigenvalues. The eigenvalues of $A$, $(\lambda_{1}, \lambda_{2}, \lambda_{3}, \lambda_{4})$, are generated in this way
  \begin{equation}
    \label{eq:SIM2-eigenvalues}
    \begin{gathered}
      \lambda_{1},\lambda_{2} = -0.1 \pm bi, \quad b\sim \mathrm{Unif}([2,4]). \\ \lambda_{3} \sim \mathrm{Unif}([-0.8, -0.4]), \quad \lambda_{4} \sim \mathrm{Unif}([-2, -1.2]).
    \end{gathered}
  \end{equation}
  The eigenvalues of $B$ are set to be $(\lambda_{1}, \lambda_{2}, \lambda_{3}, \lambda_{3})$, namely, $B$ has a pair of \emph{repeated eigenvalues} by construction. Therefore it is not identifiable at any initial condition.
\item After generating the eigenvalues, we sample an orthogonal matrix $Q$ from the standardized Haar measure (the uniform distribution) on the orthogonal group $O(4)$, and create two system matrices $A$ and $B$ as follows:
  \begin{equation}
    \label{eq:SIM2-A-B}
    A = Q
    \begin{pmatrix}
      -0.1 & b & 0 & 0 \\
      -b & -0.1 & 0 & 0 \\
      0 & 0 & \lambda_{3} & 0 \\
      0 & 0 & 0 & \lambda_{4} \\
    \end{pmatrix} Q', \quad
    B = Q
    \begin{pmatrix}
      -0.1 & b & 0 & 0 \\
      -b & -0.1 & 0 & 0 \\
      0 & 0 & \lambda_{3} & 0 \\
      0 & 0 & 0 & \lambda_{3} \\
    \end{pmatrix} Q'.
  \end{equation}
\item Like SIM1, we sample $\mathbf{x}_{0}^{(a)}$ from the uniform distribution on $S^{d-1}$ ($d=4$ in this case). Furthermore, we only keep those $\mathbf{x}_{0}^{(a)}$ with relatively large ICIS, namely $\wa(A, \mathbf{x}_{0}^{(a)}) > 0.2$. This ensures the practical identifiability of $A$ at $\mathbf{x}_{0}^{(a)}$.
\item Once $\mathbf{x}_{0}^{(a)}$ is sampled, we define
  \begin{equation}
    \label{eq:x0b}
    \mathbf{x}_{0}^{(b)} = \frac{(I_{4} - Q_{\cdot 4} Q_{\cdot 4}')\mathbf{x}_{0}^{(a)}}{\big| (I_{4} - Q_{\cdot 4} Q_{\cdot 4}')\mathbf{x}_{0}^{(a)} \big|}.
  \end{equation}
  By construction, $\mathbf{x}_{0}^{(b)}$ is a unit vector such that $\mathbf{x}_{0}^{(b)} \perp Q_{\cdot 4}$, so that ICIS equals zero in this case. According to our theoretical derivations, $A$ is not identifiable at $\mathbf{x}_{0}^{(b)}$ due to ill-positioned initial conditions.
\item We compute three sets of solution trajectories: case (A) corresponds with $(A, \mathbf{x}_{0}^{(a)})$, case (B) with $(A, \mathbf{x}_{0}^{(b)})$, and case (C) with $(B, \mathbf{x}_{0}^{(a)})$.
\end{enumerate}

For each case, we compute ICIS ($\wa$), SCN ($\tau$), PIS ($\wb$), and Stanhope's $\kappa$, based on both noisy and noise-free data. The results are illustrated in Figures~\ref{fig:SIM2-boxplots} and \ref{fig:SIM2-ROC}. We find that for noise-free data, SCN, PIS, and $\kappa$ perform very well, with almost perfect area under the curve (AUC) in receiver operating characteristic (ROC) analyses.  However, ICIS only has a relatively small AUC=0.773. This is not a surprise at all because ICIS is designed to detect un-identifiability issues associated with ill-positioned \emph{initial conditions} (case B), not un-identifiable systems that have \emph{repeated eigenvalues} (case C). This fact is also revealed in the corresponding boxplot in Figure~\ref{fig:SIM2-boxplots} (second column).

For data with noise, $\kappa$ is almost uninformative (AUC=0.503), but SCN, which is a smoothed extension of $\kappa$, works very well (AUC=0.946). It suggests that taking the smoothing effect into the consideration in SCN improves its utility as a classifier of identifiable systems.

While SCN has significantly better performance than $\kappa$ and ICIS, it is still an \emph{ad hoc} metric of practical identifiability that does not account for the uncertainty in $\hat{A}$ due to measurement error. In contrast, PIS is designed based on rigorous asymptotic analysis on the variance of $\hat{A}$, therefore PIS has the best performance (AUC=0.962).  That being said, we need to point out that from the computational perspective, SCN is more efficient and numerically robust, because SCN does not contain $(YSY')^{-1}$ and $(YSY')^{-2}$ terms used in PIS, which could have numerical issues if the dimension of the ODE system is large.  In summary, SCN could be considered as a simplified version of PIS that is less vulnerable to computational issues.

\begin{figure}
  \includegraphics[width=\textwidth]{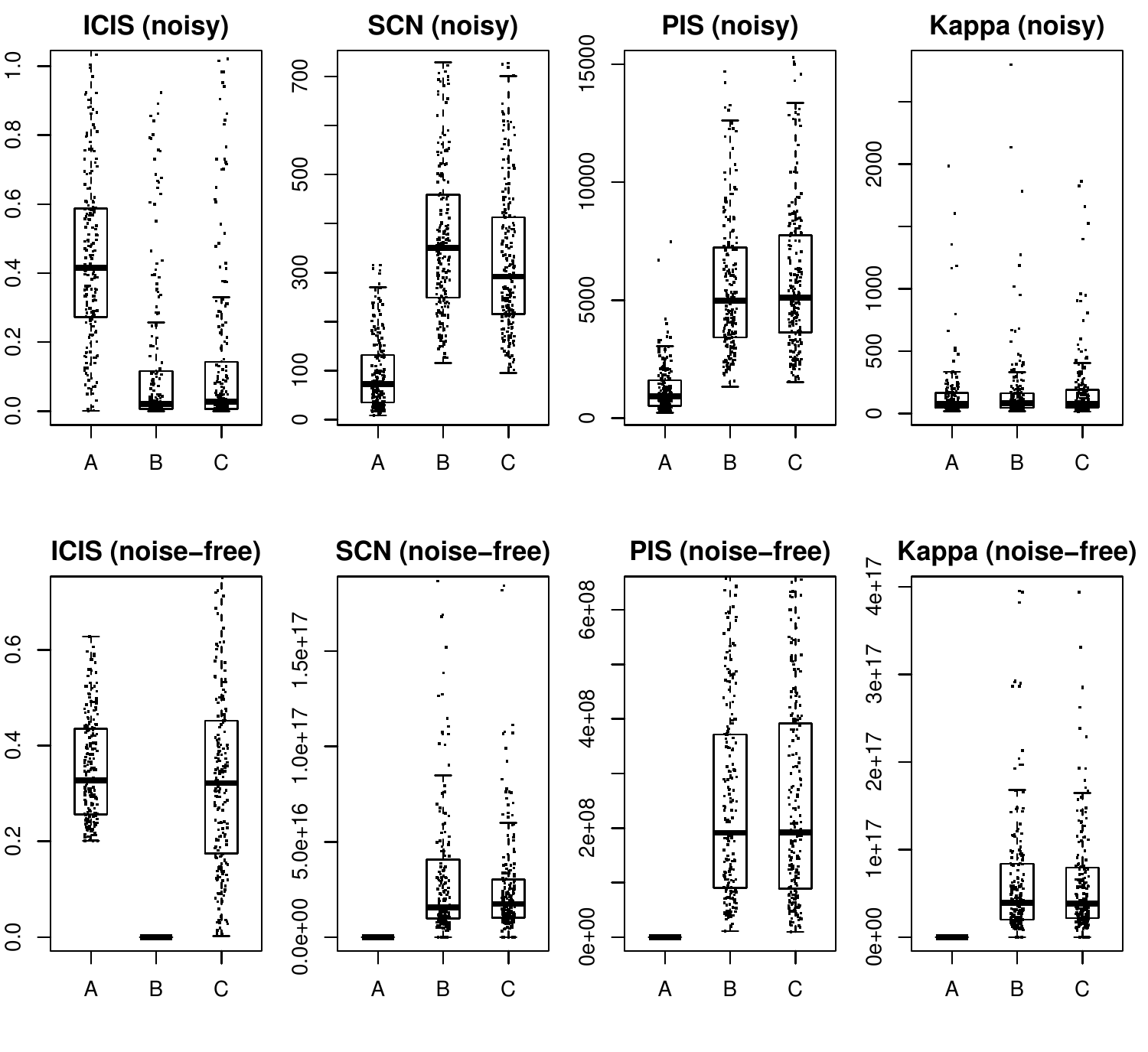}
  \caption{Values of four identifiability measures (ICIS, SCN, PIS, and Stanhope's $\kappa$) computed from 200 repetitions of SIM2. For all four scores, smaller values imply better identifiability. The first row of subfigures are computed from noisy data ($Y$), the second row of subfigures are computed from noise-free data ($X$). Each subfigure has three cases: (A) is the \textbf{identifiable} case generated by $(A,\mathbf{x}_{0}^{(a)})$; (B) is the \textbf{unidentifiable} case generated by $(A,\mathbf{x}_{0}^{(b)})$; (C) is the \textbf{unidentifiable} case generated by $(B,\mathbf{x}_{0}^{(b)})$. }
  \label{fig:SIM2-boxplots}
\end{figure}

\begin{figure}
  \includegraphics[width=\textwidth]{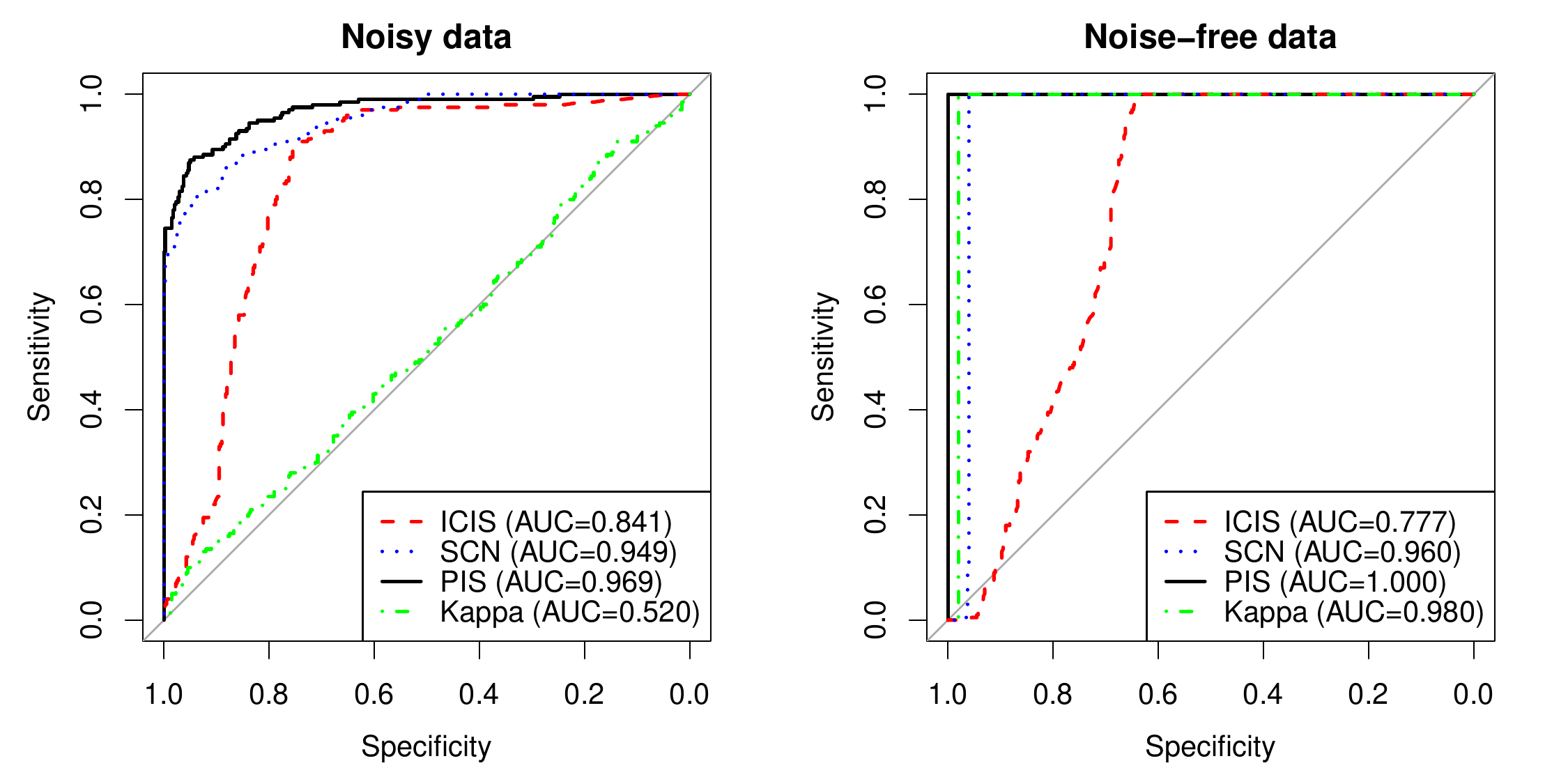}
  \caption{ROC curves of four identifiability measures (ICIS, SCN, PIS, and Stanhope's $\kappa$) in classifying case A (\textbf{identifiable} systems) from cases B and C (both are \textbf{unidentifiable} systems) in SIM2, with 200 repetitions. The left panel use data with noise; the right panel use noise-free data.}
  \label{fig:SIM2-ROC}
\end{figure}

Both noisy and noise-free data for all three cases were illustrated in Figure~\ref{fig:SIM2-fits}. Notably, visual examinations did not reveal apparent differences between the three cases, suggesting that the identifiability of the ODE system does not depend on obvious features in the solution trajectories.

\begin{figure}
  \includegraphics[width=\textwidth]{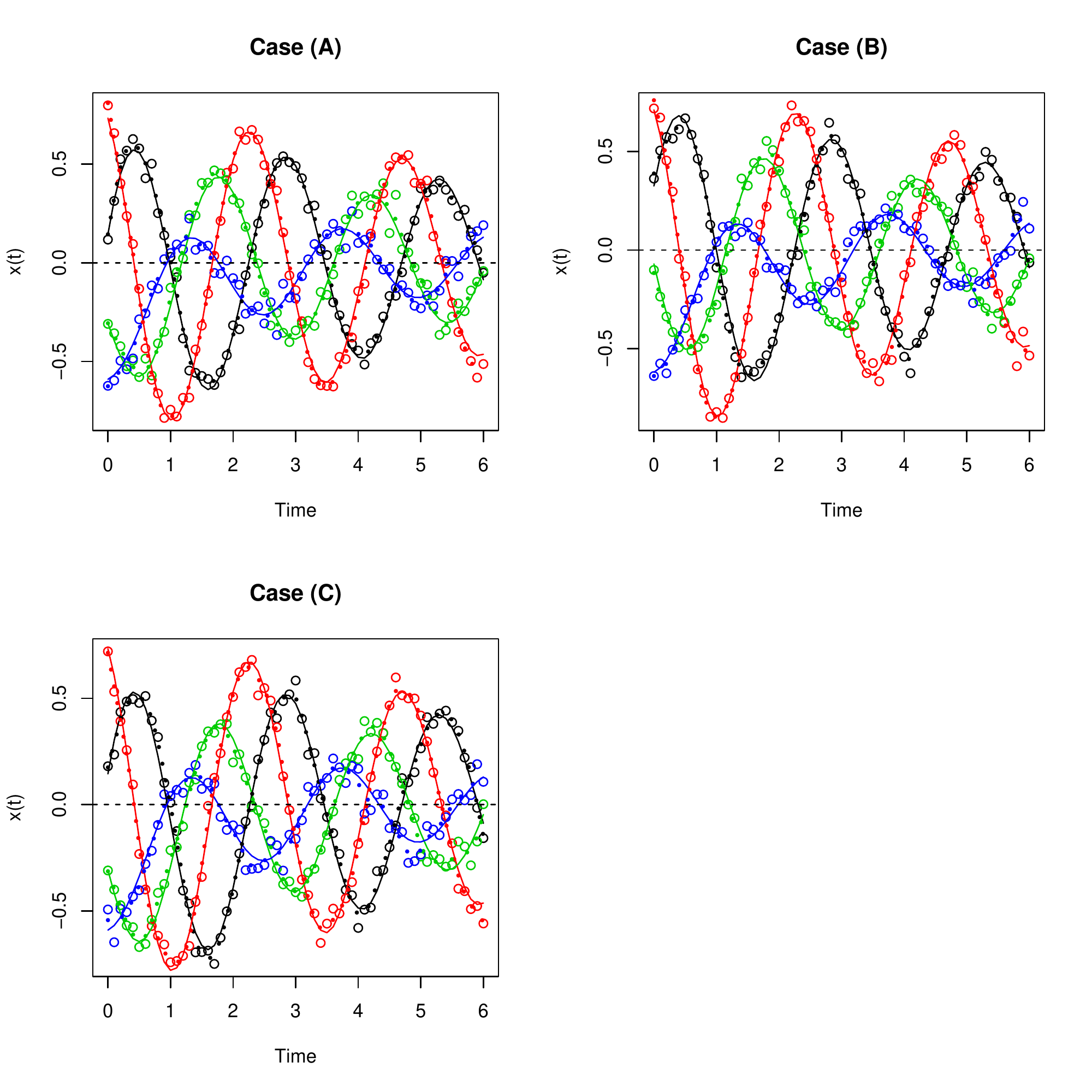}
  \caption{An illustration of the fitted solution curves in SIM2. Dots represent noisy data $y_{ij}$, four colors represent four dimensions. Solid curves are the noise-free solutions ($x(t)$) and dotted curves are the smoothed curves ($\hat{x}(t)$) fitted by roughness penalized splines. The same roughness penalty ($\lambda=0.001$) was used in all three cases.  Overall, the fitted curves agree with the noise-free data well.}
  \label{fig:SIM2-fits}
\end{figure}

\section{Conclusions}
\label{sec:conclusions}

Classical identifiability analyses for ODE systems typically depend on the availability of solution trajectories from arbitrarily many initial points. However, in many real world problems, the system matrix must be estimated from just one observed trajectory. In this case, identifiability depends not only on the properties of $A$, but also the initial condition $\mathbf{x}_{0}$. In this case, the $(A,\mathbf{x}_{0})$-identifiability used in our study is more appropriate than classical identifiability measures.

We develop an explicit formula of all matrices that are unidentifiable with $A$ at a given $\mathbf{x}_{0}$ in this study.  It enables researchers to gain better insight into identifiability analysis and help them design more practical simulation studies.

Another notable finding of our study is that when $A$ is coupled (not diagonal), an identifiability issue in just a one-dimensional invariant subspace could cause issues in many other elements of $A$ (e.g., Example~\ref{example:ex1-3d}).  Consequently, identifiability analyses that only depend on the \textbf{topology} of the network are insufficient in practice.

For high-dimensional cases, even if $\mathbf{x}_{0}$ is generated in a ``completely random'' fashion (e.g., $\mathbf{x}_{0} \sim N(0_{d}, I_{d})$), by chance, \textbf{one} invariant subspace of $A$ may have very little information, which in turn leads to practical identifiability issues.  In fact, we are able to prove that when $d\to \infty$, $\mathrm{ICIS}(A,\mathbf{x}_{0}) \to 0$ for a large class of random ODE systems, which suggests that the practical identifiability properties of low-dimensional and high-dimensional systems are fundamentally different. We believe it will be rewarding to derive more accurate convergence rates for $\mathrm{ICIS}$ as a function of $d$ in a future study. It will require  combining advanced techniques in random matrix theory, especially for ensembles of asymmetric matrices (e.g., Conjecture~\ref{thm:GinOE} in Supplementary Text) in which the $Q$ matrix are no longer orthogonal, with the identifiability analysis of ODE systems.

In this study, we also developed two scores, SCN and PIS, that use the entire dataset obtained at all time points, to quantify the practical identifiability for real world applications. Both SCN and PIS are more accurate than Stanhope's $\kappa$ when noise is present in the data, as shown by extensive simulation studies.

While our methods are developed for homogeneous systems, it should be relatively easy to generalize them for the following inhomogeneous linear ODE system
\begin{equation}
  \label{eq:inhomogeneous-ode}
  \begin{cases}
    D\mathbf{x}(t) = A\mathbf{x}(t) + b, \quad b \in \R^{d}, & t\in (0, T], \\
    \mathbf{x}(0) = \mathbf{x}_{0} \in \R^{d}.
  \end{cases}
\end{equation}

This is because Equation~\eqref{eq:inhomogeneous-ode} can be transformed into an \emph{equivalent homogeneous system} with a simple mathematical technique. Let $\mathbf{z}(t) = (\mathbf{x}(t), 1)' = (x_{1}(t), \dots, x_{d}(t), \, 1)' \in \R^{d+1}$. It satisfies the following ODE
\begin{equation}
  \label{eq:ODE-for-zt}
  \begin{cases}
    D\mathbf{z}(t) = \breve{A} \mathbf{z}(t), \qquad t\in (0, T], \\
    \mathbf{z}(0) = (\mathbf{x}_{0}, 1)' \in \R^{d+1}.
  \end{cases} \qquad \breve{A} :=
  \begin{pmatrix}
    A & b \\
    0_{d} & 0
  \end{pmatrix}.
\end{equation}

Therefore, the identifiability of Equation~\eqref{eq:inhomogeneous-ode} is the same as the identifiability of Equation~\eqref{eq:ODE-for-zt}, which is a homogeneous equation with the constraint that the last row of $\breve{A}$ must be zeros.  Let $M_{0} = \left\{B \in M_{(d+1)\times (d+1)}:\, M_{(d+1) \cdot} = 0_{d+1} \right\}$ be the set of $(d+1)\times (d+1)$-dimensional matrices such that their last rows equal $0_{d+1}$. The unidentifiability class associated with system $(A, b, \mathbf{x}_{0})$, denoted by $[A,b]_{\mathbf{x}_{0}}$, is the following subset of $M_{d\times d}$:
\begin{equation}
  \label{eq:inhomo-unident-class}
  [A, b]_{\mathbf{x}_{0}} = [\breve{A}]_{(\mathbf{x}_{0},1)'} \cap M_{0}.
\end{equation}

More future work is required to extend ICIS, SCN, and PIS for constrained systems, so that they can be used as practical guidance for applications with \textit{a priori} information.

In the near future, we plan to extend our work to the following family of nonlinear ODE system:
\begin{equation}
  \label{eq:linear-in-parameters}
  D\mathbf{x}(t) = A f\left( \mathbf{x}(t) \right), \qquad \mathbf{x}(0) = \mathbf{x}_{0}.
\end{equation}
Here $f(\cdot): \R^{d_{1}} \to \R^{d_{2}}$ is a \textbf{known} locally Lipschitz function of $\mathbf{x}(t)$, $A \in M_{d_{1}\times d_{2}}$ is the system matrix that needs to be estimated.  This system has been studied by Stanhope and colleagues, and their main conclusion (Theorem (5.3) in \cite{stanhope2014identifiability}) is very similar to that for the linear ODE systems: $A$ is identifiable at $\mathbf{x}_{0}$ if and only if the solution curve is not confine in a proper linear subspace of $\R^{d_{2}}$. To extend the SCN and PIS we developed in this study to Equation~\eqref{eq:linear-in-parameters}, we will need to study the sensitivity of an extended two-stage method that works for Equation~\eqref{eq:linear-in-parameters}.

Using linearization techniques, we believe SCN and PIS can be further extended to other types of nonlinear systems. To this end, we need: a) to approximate a nonlinear ODE system by a linear ODE at $\mathbf{x}(t)$; b) to propose a local version of the $(A,\mathbf{x}_{0})$-identifiability that works in a neighborhood of $A$ at $\mathbf{x}(t)$; c) to study the sensitivity of a reasonable parameter estimator for such system, and propose an identifiability score based on the useful information aggregated from all time points.



\bibliography{ident}

\begin{thebibliography}{10}
\expandafter\ifx\csname url\endcsname\relax
  \def\url#1{\texttt{#1}}\fi
\expandafter\ifx\csname urlprefix\endcsname\relax\def\urlprefix{URL }\fi
\expandafter\ifx\csname href\endcsname\relax
  \def\href#1#2{#2} \def\path#1{#1}\fi

\bibitem{butcher2014ordinary}
J.~Butcher, Ordinary differential equations, in: Walter Gautschi, Vol.~3,
  Springer, 2014, pp. 7--8.

\bibitem{commenges2011inference}
D.~Commenges, D.~Jolly, J.~Drylewicz, H.~Putter, R.~Thi{\'e}baut, Inference in
  {HIV} dynamics models via hierarchical likelihood, Computational Statistics
  \& Data Analysis 55~(1) (2011) 446--456.

\bibitem{DeJong2002}
H.~De~Jong, Modeling and simulation of genetic regulatory systems: a literature
  review, Journal of Computational Biology 9~(1) (2002) 67--103.

\bibitem{hemker1972numerical}
P.~W. Hemker, Numerical methods for differential equations in system simulation
  and in parameter estimation, Analysis and Simulation of Biochemical Systems
  28 (1972) 59--80.

\bibitem{holter2001dynamic}
N.~S. Holter, A.~Maritan, M.~Cieplak, N.~V. Fedoroff, J.~R. Banavar, Dynamic
  modeling of gene expression data, Proceedings of the National Academy of
  Sciences 98~(4) (2001) 1693--1698.

\bibitem{huang2006hierarchical}
Y.~Huang, D.~Liu, H.~Wu, Hierarchical bayesian methods for estimation of
  parameters in a longitudinal {HIV} dynamic system, Biometrics 62~(2) (2006)
  413--423.

\bibitem{lavielle2011maximum}
M.~Lavielle, A.~Samson, A.~Karina~Fermin, F.~Mentr{\'e}, Maximum likelihood
  estimation of long-term {HIV} dynamic models and antiviral response,
  Biometrics 67~(1) (2011) 250--259.

\bibitem{li2011large}
Z.~Li, P.~Li, A.~Krishnan, J.~Liu, Large-scale dynamic gene regulatory network
  inference combining differential equation models with local dynamic bayesian
  network analysis, Bioinformatics 27~(19) (2011) 2686--2691.

\bibitem{Lu2011}
T.~Lu, H.~Liang, H.~Li, H.~Wu, High-dimensional {ODEs} coupled with
  mixed-effects modeling techniques for dynamic gene regulatory network
  identification, Journal of the American Statistical Association 106~(496)
  (2011) 1242--1258.

\bibitem{Ramsay2007Parameter}
J.~O. Ramsay, G.~Hooker, D.~Campbell, J.~Cao, Parameter estimation for
  differential equations: a generalized smoothing approach (with discussion),
  Journal of the Royal Statistical Society 69~(5) (2007) 741--796.

\bibitem{moler2003nineteen}
C.~Moler, C.~Van~Loan, Nineteen dubious ways to compute the exponential of a
  matrix, twenty-five years later, SIAM review 45~(1) (2003) 3--49.

\bibitem{huang2006bayesian}
Y.~Huang, H.~Wu, A {Bayesian} approach for estimating antiviral efficacy in
  {HIV} dynamic models, Journal of Applied Statistics 33~(2) (2006) 155--174.

\bibitem{huang2010hierarchical}
Y.~Huang, H.~Wu, E.~P. Acosta, Hierarchical {Bayesian} inference for {HIV}
  dynamic differential equation models incorporating multiple treatment
  factors, Biometrical Journal 52~(4) (2010) 470--486.

\bibitem{li2005parameter}
Z.~Li, M.~R. Osborne, T.~Prvan, Parameter estimation of ordinary differential
  equations, IMA Journal of Numerical Analysis 25~(2) (2005) 264--285.

\bibitem{putter2002bayesian}
H.~Putter, S.~Heisterkamp, J.~Lange, F.~De~Wolf, A {Bayesian} approach to
  parameter estimation in {HIV} dynamical models, Statistics in Medicine
  21~(15) (2002) 2199--2214.

\bibitem{wu2019parameter}
L.~Wu, X.~Qiu, Y.-x. Yuan, H.~Wu, Parameter estimation and variable selection
  for big systems of linear ordinary differential equations: A matrix-based
  approach, Journal of the American Statistical Association 114~(526) (2019)
  657--667.

\bibitem{Xue2010}
H.~Xue, H.~Miao, H.~Wu, Sieve estimation of constant and time-varying
  coefficients in nonlinear ordinary differential equation models by
  considering both numerical error and measurement error, Annals of Statistics
  38~(4) (2010) 2351--2387.

\bibitem{miao2011identifiability}
H.~Miao, X.~Xia, A.~S. Perelson, H.~Wu, On identifiability of nonlinear ode
  models and applications in viral dynamics, SIAM review 53~(1) (2011) 3--39.

\bibitem{thowsen1978identifiability}
A.~THOWSEN, Identifiability of dynamic systems, International Journal of
  Systems Science 9~(7) (1978) 813--825.

\bibitem{stanhope2014identifiability}
S.~Stanhope, J.~Rubin, D.~Swigon, Identifiability of linear and
  linear-in-parameters dynamical systems from a single trajectory, SIAM Journal
  on Applied Dynamical Systems 13~(4) (2014) 1792--1815.

\bibitem{qiu2015diversity}
X.~Qiu, S.~Wu, S.~P. Hilchey, J.~Thakar, Z.-P. Liu, S.~L. Welle, A.~D. Henn,
  H.~Wu, M.~S. Zand, Diversity in compartmental dynamics of gene regulatory
  networks: the immune response in primary influenza a infection in mice, PloS
  one 10~(9) (2015).

\bibitem{sun2016controllability}
X.~Sun, F.~Hu, S.~Wu, X.~Qiu, P.~Linel, H.~Wu, Controllability and stability
  analysis of large transcriptomic dynamic systems for host response to
  influenza infection in human, Infectious Disease Modelling 1~(1) (2016)
  52--70.

\bibitem{wu2013high}
S.~Wu, Z.-P. Liu, X.~Qiu, H.~Wu, High-dimensional ordinary differential
  equation models for reconstructing genome-wide dynamic regulatory networks,
  in: Topics in applied statistics, Springer, New York, NY, 2013, pp. 173--190.

\bibitem{wu2014modeling}
S.~Wu, Z.-P. Liu, X.~Qiu, H.~Wu, Modeling genome-wide dynamic regulatory
  network in mouse lungs with influenza infection using high-dimensional
  ordinary differential equations, PloS one 9~(5) (2014).

\bibitem{mccullers2010influenza}
J.~A. McCullers, J.~L. McAuley, S.~Browall, A.~R. Iverson, K.~L. Boyd,
  B.~Henriques~Normark, Influenza enhances susceptibility to natural
  acquisition of and disease due to streptococcus pneumoniae in ferrets, The
  Journal of infectious diseases 202~(8) (2010) 1287--1295.

\bibitem{tunali1987new}
E.~Tunali, T.-J. Tarn, New results for identifiability of nonlinear systems,
  IEEE Transactions on Automatic Control 32~(2) (1987) 146--154.

\bibitem{jeffrey2005identifiability}
A.~M. Jeffrey, X.~Xia, I.~Craig, Identifiability of hiv/aids models,
  Deterministic and Stochastic models of AIDS epidemics and HIV infections with
  intervention (2005) 255--286.

\bibitem{xia2003identifiability}
X.~Xia, C.~H. Moog, Identifiability of nonlinear systems with application to
  hiv/aids models, IEEE transactions on automatic control 48~(2) (2003)
  330--336.

\bibitem{ginibre1965statistical}
J.~Ginibre, Statistical ensembles of complex, quaternion, and real matrices,
  Journal of Mathematical Physics 6 (1965) 440.

\bibitem{gohberg2006invariant}
I.~Gohberg, P.~Lancaster, L.~Rodman, Invariant subspaces of matrices with
  applications, SIAM, 2006.

\bibitem{lehmann1991eigenvalue}
N.~Lehmann, H.-J. Sommers, Eigenvalue statistics of random real matrices,
  Physical Review Letters 67~(8) (1991) 941--944.

\bibitem{tao2012topics}
T.~Tao, Topics in random matrix theory, Vol. 132, American Mathematical Society
  Providence, RI, 2012.

\bibitem{livan2018introduction}
G.~Livan, M.~Novaes, P.~Vivo, Introduction to random matrices: theory and
  practice, Vol.~26, Springer, 2018.

\bibitem{weyl1946classical}
H.~Weyl, The classical groups: their invariants and representations, Vol.~45,
  Princeton university press, 1946.

\bibitem{edelman1997probability}
A.~Edelman, The probability that a random real gaussian matrix haskreal
  eigenvalues, related distributions, and the circular law, Journal of
  Multivariate Analysis 60~(2) (1997) 203--232.

\bibitem{ferguson1996course}
T.~S. Ferguson, A course in large sample theory, Vol.~49, Chapman \& Hall
  London, 1996.

\end{thebibliography}

\pagebreak
\begin{center}
\textbf{\large Supplementary Text: Identifiability Analysis of Linear Ordinary Differential Equation Systems with a Single Trajectory}
\end{center}
\setcounter{equation}{0}
\setcounter{section}{0}
\setcounter{figure}{0}
\setcounter{table}{0}
\setcounter{page}{1}
\makeatletter
\renewcommand{\theequation}{S.\arabic{equation}}
\renewcommand{\thesection}{S\arabic{section}}
\renewcommand{\thetable}{S\arabic{table}}
\renewcommand{\thefigure}{S\arabic{figure}}
\renewcommand{\thetheorem}{S\arabic{theorem}}
\renewcommand{\theexample}{S\arabic{example}}

\section{Structural Identifiability is Unattainable for Linear ODE Systems}
\label{sec:about-struct-ident}

In this section, we prove the following statement:
\begin{proposition}\label{theo:unconditional-ident-unattainable}
  If the dimension $d$ is odd, there is no open subset $\Omega \subseteq M_{d\times d}$ on which ODE system~\eqref{eq:lin-ode} is unconditionally identifiable.  If the dimension $d$ is even, system~\eqref{eq:lin-ode} is not unconditionally identifiable for all $\Omega \subseteq M_{d\times d}$ such that $\lambda\left(M_{d\times d}\setminus \Omega\right) = 0$, where $\lambda(\cdot)$ is the Lebesgue measure on $M_{d\times d}$.
\end{proposition}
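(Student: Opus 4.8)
The plan is to establish the contrapositive in each case: starting from $\Omega$ (nonempty and open in the odd case, of full Lebesgue measure in the even case), I will produce two \emph{distinct} matrices $A,B\in\Omega$ and a nonzero vector $\mathbf{x}_{0}$ with $e^{tA}\mathbf{x}_{0}=e^{tB}\mathbf{x}_{0}$ for all $t$, which contradicts Definition~\ref{def:uncond-ident}. Everything hinges on one elementary remark: if $\mathbf{v}\neq 0$ satisfies $A\mathbf{v}=B\mathbf{v}=\lambda\mathbf{v}$ for a common real $\lambda$, then $A^{k}\mathbf{v}=\lambda^{k}\mathbf{v}=B^{k}\mathbf{v}$ for every $k\geq 0$, hence $e^{tA}\mathbf{v}=e^{\lambda t}\mathbf{v}=e^{tB}\mathbf{v}$ (equivalently, $\mathrm{span}(\mathbf{v})$ is a common proper invariant subspace containing the entire trajectory, in the language of Theorem~\ref{thm:identifiability}). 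So it suffices, in each case, to exhibit two distinct matrices of $\Omega$ that share a real eigenpair $(\mathbf{v},\lambda)$ with $\mathbf{v}\neq 0$.

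For odd $d$ (so $d\geq 3$; $d=1$ is degenerate and excluded) I would take any $A_{0}\in\Omega$. Its characteristic polynomial has odd degree $d$, hence a real root $\lambda$, so $A_{0}\mathbf{v}=\lambda\mathbf{v}$ for some $\mathbf{v}\neq 0$. The set $W:=\{\,C\in M_{d\times d}:C\mathbf{v}=0\,\}$ is a linear subspace of dimension $d^{2}-d\geq 1$, and every matrix $A_{0}+C$ with $C\in W$ still has $\mathbf{v}$ as an eigenvector with eigenvalue $\lambda$. Since $\Omega$ is open it contains a ball around $A_{0}$, so I can choose $C_{0}\in W$ with $\|C_{0}\|$ small and nonzero so that $B:=A_{0}+C_{0}\in\Omega$; then $B\neq A_{0}$ and $A_{0}\mathbf{v}=B\mathbf{v}=\lambda\mathbf{v}$, and the remark above finishes this case.

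For even $d$ the obstacle is that a real matrix need not have any real eigenvalue, so the construction above can fail on a whole open set (e.g.\ in $M_{2\times 2}$ on a neighbourhood of a planar rotation, or more generally near a block-diagonal rotation), which is exactly why the hypothesis must be weakened to $\lambda_{d\times d}(M_{d\times d}\setminus\Omega)=0$. The idea is then a fibration/Fubini argument. Fix $A_{*}=\diag(1,2,\dots,d)$ and let $U_{0}$ be a small open ball around $A_{*}$ contained in the set of matrices having a simple real eigenvalue; this set is open (simple real eigenvalues are stable) and nonempty, hence of positive measure, although for even $d$ it is only a \emph{proper} subset of $M_{d\times d}$. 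On $U_{0}$ the implicit function theorem yields a smooth submersion $g=(\mathbf{v}(\cdot),\lambda(\cdot)):U_{0}\to S^{d-1}\times\mathbb{R}$ continuing the eigenpair $(\mathbf{e}_{1},1)$, whose fibre over $(\mathbf{v}_{0},\lambda_{0})$ is $U_{0}\cap\{\,A:A\mathbf{v}_{0}=\lambda_{0}\mathbf{v}_{0}\,\}$, a smooth manifold of dimension $d^{2}-d\geq 1$. Because $M_{d\times d}\setminus\Omega$ is Lebesgue-null and a smooth submersion pulls Lebesgue-null sets back to Lebesgue-null sets (immediate from the submersion normal form together with Fubini), for almost every $(\mathbf{v}_{0},\lambda_{0})$ the set $\Omega\cap g^{-1}(\mathbf{v}_{0},\lambda_{0})$ is co-null \emph{inside} that fibre; since the fibre is a positive-dimensional manifold, this intersection contains (uncountably many, in particular) two distinct matrices $A\neq B\in\Omega$. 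They satisfy $A\mathbf{v}_{0}=B\mathbf{v}_{0}=\lambda_{0}\mathbf{v}_{0}$ with $\mathbf{v}_{0}\neq 0$, so again $\Omega$ is not unconditionally identifiable.

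The main work — and the only place the parity of $d$ genuinely enters — is the even case. Two points will need care: first, the remark that the locus of matrices with a simple real eigenvalue is open and of positive measure, which holds for every $d$ but for even $d$ is a \emph{proper} subset (this is precisely the reason the conclusion cannot be upgraded to arbitrary open $\Omega$); and second, the transfer step that converts ``$\Omega$ is co-null in $M_{d\times d}$'' into ``the generic real-eigenpair fibre already contains two matrices of $\Omega$''. Making the second point rigorous is what forces the restriction to the simple-eigenvalue patch $U_{0}$ (so that the eigenpair map is honestly a submersion and the implicit function theorem applies) and the appeal to the lemma that submersions pull null sets back to null sets; modulo those, the argument is routine.
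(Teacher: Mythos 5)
Your proof is correct, but it takes a genuinely different and more self-contained route than the paper. The paper reduces both cases to Corollary~3.9 of Stanhope et al.\ (unconditional identifiability on an open $\Omega$ forces every member of $\Omega$ to have no real left-eigenvector), disposes of odd $d$ by noting every odd-dimensional real matrix has a real eigenvalue, and for even $d$ invokes Edelman's theorem that a Ginibre matrix has a real eigenvalue with positive probability, together with mutual absolute continuity of the Ginibre and Lebesgue measures. You instead build everything from the elementary observation that a shared real eigenpair $(\mathbf{v},\lambda)$ forces $e^{tA}\mathbf{v}=e^{\lambda t}\mathbf{v}=e^{tB}\mathbf{v}$, produce the second matrix explicitly in the odd case by perturbing within the $(d^{2}-d)$-dimensional subspace $\{C:C\mathbf{v}=0\}$, and in the even case replace Edelman by the more elementary fact that the simple-real-eigenvalue locus is open and nonempty (hence of positive measure), followed by a submersion/Fubini argument over the eigenpair fibration. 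That last step is the real added value: the paper's even-case argument only establishes that a co-null $\Omega$ contains \emph{a} matrix with a real eigenvalue, and leaves implicit why $\Omega$ must contain \emph{two} distinct matrices sharing a trajectory --- a nontrivial point, since the unidentifiable class through a given $A$ is itself a Lebesgue-null affine set that a co-null $\Omega$ could in principle avoid; your coarea-type slicing of the null complement along the eigenpair fibres closes exactly this gap. Two small remarks: your fibre $g^{-1}(\mathbf{v}_{0},\lambda_{0})$ is in general only \emph{contained in} $U_{0}\cap\{A:A\mathbf{v}_{0}=\lambda_{0}\mathbf{v}_{0}\}$ rather than equal to it (only the tracked eigenpair is pinned down), but containment is all your argument uses; and your exclusion of $d=1$ is well taken --- the statement as printed is vacuously problematic there, since a one-dimensional system is unconditionally identifiable on all of $M_{1\times 1}$, an edge case the paper's own argument also does not cover.
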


\begin{proof}
  First, we note that the structural identifiability defined in \ref{def:struct-ident} is a special case of the so-called \emph{unconditional identifiability} defined in Definition~2.4 \cite{stanhope2014identifiability} when $\Omega$ is set to be an \textbf{open and dense} subset of $M_{d\times d}$.

  According to Corollary~3.9 in \cite{stanhope2014identifiability}, ODE system~\eqref{eq:lin-ode} is unconditionally identifiable on an open set $\Omega \subset M_{d\times d}$ iff for every $A \in \Omega$, there is no left-eigenvector of $A$ that is orthogonal to every $\mathbf{x}_{0} \in \R^{d} \setminus \left\{ 0_{d} \right\}$. That immediately excludes matrices that has at least one real eigenvalue and eigenvector, which includes all cases when $d$ is odd.

  Now let us focus on the even-dimensional cases. Edelman showed in \cite{edelman1997probability} that for a random matrix $A \in M_{d\times d}$ with $i.i.d.$ normally distributed entries (the Ginibre ensemble), the probability of $A$ having a real eigenvalue is strictly greater than zero. Because the probability measure of the Ginibre ensemble and the Lebesgue measure on $M_{d\times d}$ are absolutely continuous with respect to each other, we know that we cannot find $\Omega$ such that: a) $A$ is unconditionally identifiable on $\Omega$, and b) $\lambda\left( M_{d\times d} \setminus \Omega\right) = 0$.
\end{proof}

\section{Practically Unidentifiable High-dimensional ODEs}
\label{sec:highdim-ode-unidentifiable}

In this section, we move Theorem~\ref{thm:GOE-like}, which states that if the dimension is high and system matrix $A$ is generated from a large class of random matrices, the ICIS converges to zero in $L^{2}$ (and in probability) when $d \to \infty$. 

First, we need to prove the following technical lemma.

\begin{lemma}
  \label{thm:uniform-sphere-extreme-Smin}
  Let $U = (U_{1}, U_{2}, \dots, U_{d})'$, $U \sim \mathrm{Unif}(S^{d-1})$ be a unit vector in $\R^{d}$ generated from the uniform distribution on $S^{d-1}$. Let $S_{\min} := \min_{i} U_{i}^{2}$.

  We have
  \begin{equation}
    \label{eq:asymp-unif-sphere-max}
    \frac{2d^{3}}{\pi}\cdot S_{\min} \stackrel{w}{\longrightarrow} \mathrm{Weibull}\Big(1, \frac{1}{2}\Big). \\
  \end{equation}
  Here $\mathrm{Weibull}\Big(1, \frac{1}{2}\Big)(x) = 1_{\R^{+}}(x) \cdot  \left(1-e^{-x^{1/2}}\right)$ is a Weibull distribution with scale parameter 1 and shape parameter 1/2.
\end{lemma}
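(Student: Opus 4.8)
The plan is to reduce the problem to the well-understood representation of a uniform point on the sphere in terms of independent Gaussians, and then carry out a standard extreme-value analysis for the minimum of $d$ (weakly dependent) squared coordinates. Concretely, write $U = Z/|Z|$ where $Z = (Z_1,\dots,Z_d)'$ has i.i.d.\ $N(0,1)$ entries, so that $U_i^2 = Z_i^2/\sum_{j} Z_j^2$. Since $\sum_j Z_j^2/d \to 1$ almost surely by the law of large numbers, the factor $|Z|^2$ is, up to a $1+o(1)$ correction, just $d$; the first step is therefore to argue that it suffices to study the minimum of the $Z_i^2$ themselves, i.e.\ that $d \cdot S_{\min}$ and $\min_i Z_i^2$ have the same weak limit after the appropriate rescaling. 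This is a routine Slutsky-type argument once the limit for $\min_i Z_i^2$ is in hand.

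Next I would analyze $M_d := \min_{1\le i\le d} Z_i^2$. Each $Z_i^2$ is $\chi^2_1$, and near zero its CDF behaves like $F(x) = P(Z_1^2 \le x) = \sqrt{2x/\pi}\,(1+o(1))$ as $x \downarrow 0$ (this is just $2\Phi(\sqrt{x})-1 \sim 2\varphi(0)\sqrt{x} = \sqrt{2x/\pi}$). For i.i.d.\ samples, $P(M_d > x) = (1-F(x))^d$, so for the rescaling $x = t/(c d^2)$ with the right constant $c$ we get
\begin{equation}
  P\!\left(M_d > \frac{t}{c d^2}\right) = \left(1 - \sqrt{\frac{2t}{\pi c d^2}}(1+o(1))\right)^d \longrightarrow \exp\!\left(-\sqrt{\frac{2t}{\pi c}}\right).
\end{equation}
Choosing $c = 2/\pi$ makes the exponent $-\sqrt{t}$, i.e.\ $P(M_d > t/(c d^2)) \to e^{-\sqrt{t}}$, which is exactly the survival function of a $\mathrm{Weibull}(1,1/2)$ random variable. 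Thus $\tfrac{2d^2}{\pi} M_d \stackrel{w}{\to} \mathrm{Weibull}(1,1/2)$.

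Finally I would splice in the normalization factor. From step one, $S_{\min} = M_d/|Z|^2$ and $|Z|^2 = d(1+o_P(1))$, so $\tfrac{2d^3}{\pi} S_{\min} = \tfrac{2d^2}{\pi} M_d \cdot \tfrac{d}{|Z|^2} = \tfrac{2d^2}{\pi}M_d \cdot (1+o_P(1))$, and the conclusion \eqref{eq:asymp-unif-sphere-max} follows from Slutsky's theorem. The main obstacle is the first step: one must control the joint behavior of $M_d$ and $|Z|^2$ carefully, since both depend on the same $Z_i$. The cleanest route is to note that $M_d$ is determined by the single smallest coordinate while $|Z|^2$ is a sum of all $d$ of them, so removing one summand changes $|Z|^2$ by a negligible $O_P(\log d)$ relative to its size $d$; making this precise — e.g.\ by conditioning on the index of the minimum, or by a direct bound showing $|Z|^2 - Z_{(1)}^2 = d(1+o_P(1))$ uniformly — is the only genuinely delicate part, and everything else is the textbook Fréchet/Weibull extreme-value computation above.
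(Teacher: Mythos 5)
Your proposal is correct and follows essentially the same route as the paper's proof: represent $U = Z/|Z|$ with Gaussian $Z$, obtain the Weibull limit for $\tfrac{2d^{2}}{\pi}\min_i Z_i^2$ (the paper cites the Fisher--Tippett--Gnedenko theorem where you carry out the survival-function computation explicitly), and combine with $|Z|^2/d \to 1$ via Slutsky. The "delicate" dependence issue you flag at the end is actually a non-issue: Slutsky's theorem requires no independence when one factor converges in probability to a constant, which $|Z|^2/d$ does, so the splicing step is immediate.
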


\begin{proof}
  Since $U \sim \mathrm{Unif}(S^{d-1})$, there exists $Z \sim N(0_{d}, I_{d})$, such that $U = \frac{Z}{|Z|} = \left(\frac{Z_{1}}{|Z|}, \dots, \frac{Z_{d}}{|Z|} \right)'$.  Let $W_{\min} := \min_{i} Z_{i}^{2}$. Because $|Z|$ and $|Z|^{2}$ are a constant for all $i=1,2,\dots, d$, it is easy to see that $S_{\min} = \frac{W_{\min}}{|Z|^{2}}$.

  Based on the Fisher-Tippett-Gnedenko theorem~\cite{ferguson1996course} and notice that $W_{\min}$ has a lower bound ($W_{\min} \geqslant 0$), we can show that
  \begin{equation}
    \label{eq:asymp-Zmin-Zstar}
    \begin{gathered}
      \frac{2d^{2} W_{\min}}{\pi} \stackrel{w}{\longrightarrow} \mathrm{Weibull}\Big(1, \frac{1}{2} \Big).
    \end{gathered}
  \end{equation}

  In addition, we know that $|Z|^{2} := \sum_{i=1}^{d} Z_{i}^{2} \sim \chi^{2}_{d}$. Simple calculations show that when $d\to \infty$,
  \begin{equation}
    \label{eq:znorm-CLT}
    \begin{gathered}
      E \frac{|Z|^{2}}{d} = 1, \qquad \mathrm{var}\left( \frac{|Z|^{2}}{d} \right) = \frac{2}{d} \longrightarrow 0, \qquad \frac{|Z|^{2}}{d} \stackrel{L^{2}}{\longrightarrow} 1. \\
    \end{gathered}
  \end{equation}

  Using Slutsky's theorem, we have
  \begin{equation}
    \label{eq:Wmin-asymp}
    \begin{split}
      \frac{2d^{3} S_{\min}}{\pi} = \frac{ \frac{2d^{2} W_{\min}}{\pi} }{ \frac{|Z|^{2}}{d} } \stackrel{w}{\longrightarrow} \mathrm{Weibull}\Big(1, \frac{1}{2} \Big).
    \end{split}
  \end{equation}

\end{proof}

\begin{corollary}
  \label{thm:uniform-sphere-extreme-Smin-corollary}
  Let $\tilde{\mathbf{x}}_{0} \sim \mathrm{Unif}\left( r\cdot S^{d-1} \right)$ be a uniformly distributed random variable on a sphere in $\R^{d}$ with radius $d$, and $T \in O(d)$ be an arbitrary orthogonal matrix. Let
  \begin{equation*}
    S_{\min}(T,r) = \min_{i} \left(T\tilde{\mathbf{x}}_{0}\right)_{i}^{2}
  \end{equation*}
  be the smallest of the squared elements in vector $T\tilde{\mathbf{x}}_{0}$.

  We have
  \begin{equation}
    \label{eq:asymp-unif-sphere-max-withT}
    \frac{2d^{3}}{\pi r^{2}}\cdot S_{\min}(T,r) \stackrel{w}{\longrightarrow} \mathrm{Weibull}\Big(1, \frac{1}{2}\Big). \\
  \end{equation}

\end{corollary}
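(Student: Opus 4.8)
The plan is to reduce the statement directly to Lemma~\ref{thm:uniform-sphere-extreme-Smin} by exploiting the rotational invariance of the uniform measure on $S^{d-1}$. First I would write $\tilde{\mathbf{x}}_{0} = r\,U$ with $U \sim \mathrm{Unif}(S^{d-1})$, which is merely the definition of the uniform law on the radius-$r$ sphere. Since $T \in O(d)$ and the uniform (Haar) measure on $S^{d-1}$ is invariant under $O(d)$, the rotated vector $\tilde U := TU$ again has the $\mathrm{Unif}(S^{d-1})$ distribution. Because $r$ is a scalar common to all coordinates, it then follows that
\begin{equation*}
  S_{\min}(T,r) = \min_{i}\left( r\,\tilde U_{i}\right)^{2} = r^{2}\min_{i}\tilde U_{i}^{2} =: r^{2}\,\tilde S_{\min},
\end{equation*}
where $\tilde S_{\min}$ is precisely the quantity $S_{\min}$ from Lemma~\ref{thm:uniform-sphere-extreme-Smin} applied to the uniform vector $\tilde U$.

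Next, I would invoke Lemma~\ref{thm:uniform-sphere-extreme-Smin} for $\tilde U$, which gives $\frac{2d^{3}}{\pi}\tilde S_{\min} \stackrel{w}{\longrightarrow} \mathrm{Weibull}\big(1,\tfrac{1}{2}\big)$. Dividing by the deterministic factor $r^{2}$ is an exact (not asymptotic) rescaling, so it transports the weak convergence through unchanged, even when $r = r(d)$ is allowed to depend on the dimension; this yields $\frac{2d^{3}}{\pi r^{2}}S_{\min}(T,r) \stackrel{w}{\longrightarrow} \mathrm{Weibull}\big(1,\tfrac{1}{2}\big)$, which is exactly \eqref{eq:asymp-unif-sphere-max-withT}.

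I do not expect any genuine obstacle here: the whole argument is a one-line change of variables plus an appeal to Lemma~\ref{thm:uniform-sphere-extreme-Smin}. The only point worth stating carefully is that $T$ is an \emph{arbitrary fixed} orthogonal matrix rather than a random one, but this causes no difficulty because $TU \stackrel{d}{=} U$ holds for every fixed $T \in O(d)$; and if one preferred $T$ random and independent of $U$, the same distributional identity holds conditionally on $T$, hence unconditionally. The corollary is then what feeds into the proof of Theorem~\ref{thm:GOE-like}, where the orthogonal $T$ arises as (the transpose of) the eigenvector matrix of the orthogonally invariant ensemble.
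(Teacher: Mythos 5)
Your proposal is correct and follows essentially the same route as the paper's own proof: factor out the radius to get $S_{\min}(T,r)=r^{2}\min_{i}(TU)_{i}^{2}$, use the $O(d)$-invariance of the uniform law on $S^{d-1}$ to identify $\min_{i}(TU)_{i}^{2}$ in distribution with the $S_{\min}$ of Lemma~\ref{thm:uniform-sphere-extreme-Smin}, and then transport the weak limit through the deterministic rescaling. Your added remark that the identity is exact (so $r$ may depend on $d$) is a useful clarification for the way the corollary is used in the proof of Theorem~\ref{thm:GOE-like}, but it does not change the argument.
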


\begin{proof}
  It is easy to show that (a) $\left(T\tilde{\mathbf{x}}_{0}\right)_{i}^{2} = r^{2} \left(T \frac{\tilde{\mathbf{x}}_{0}}{r} \right)_{i}^{2}$, therefore $S_{\min}(T,r) = r^{2} S_{\min}(T,1)$, and (b) $\tilde{\mathbf{x}}_{0} \sim \mathrm{Unif}\left( r\cdot S^{d-1} \right)$ is invariant under orthogonal transformation, i.e., $T\tilde{\mathbf{x}}_{0} \stackrel{d}{=} \tilde{\mathbf{x}}_{0}$, therefore $S_{\min}(T,1) = S_{\min}$. In summary, $S_{\min}(T,r) \stackrel{d}{=} r^{2}S_{\min}$ for every $T \in O(d)$ and $r\in \R^{+}$, in which implies Equation~\eqref{eq:asymp-unif-sphere-max-withT}.
\end{proof}

We are now ready to prove Theorem~\ref{thm:GOE-like}.
\begin{proof}[Proof of Theorem~\ref{thm:GOE-like}]
As before, we write $A = Q\Lambda Q^{-1}$ as the Jordan canonical decomposition of $A$. Recall that ICIS is defined as $\wa(A, \mathbf{x}_{0}) := \min_{k} |w_{0,k}|$, where $w_{0,k} := Q^{-1} \mathbf{x}_{0}$. Here $Q$ is the basis system of all invariant subspaces of $A$. The symmetry assumption implies that: (a) $Q$ is orthogonal, so $Q^{-1} = Q'$, and (b) $\Lambda$ is diagonal. The orthogonal invariance assumption implies that the marginal distribution of $Q$ must be the standardized Haar measure on $O(d)$.

  Let $T\in O(d)$ be an arbitrary orthogonal matrix and let $\tilde{A} = TAT'$. Apparently, its orthogonal decomposition is $\tilde{A} = \tilde{Q} \Lambda \tilde{Q}'$, for $\tilde{Q} = TQ$.  Based on the orthogonal invariance of $A$, we know that $p(\tilde{Q}) = p(Q) = \mathrm{Haar}(O(d))$, and $\wa(\tilde{A}, \mathbf{x}_{0}) \stackrel{d}{=} \wa(A, \mathbf{x}_{0})$.

  Let us define $\varphi(\mathbf{x}) : \R^{d} \to \R^{+}$ be $\varphi(\mathbf{x}) := \max_{i} |Qx_{i}|$. Using this notation, $\wa(A, \mathbf{x}_{0}) = \varphi(Q^{-1}\mathbf{x}_{0})$, and
  \begin{equation}\label{w0star-ortho-invariant}
    \begin{gathered}
      \wa(\tilde{A}, \mathbf{x}_{0}) := \wa(TAT', \mathbf{x}_{0}) = \varphi(Q^{-1} T' \mathbf{x}_{0}) = \wa(A, T'\mathbf{x}_{0}). \\
      \wa(\tilde{A}, \mathbf{x}_{0}) \stackrel{d}{=} \wa(A, \mathbf{x}_{0}) \Longrightarrow \wa(A, T'\mathbf{x}_{0}) \stackrel{d}{=} \wa(A, \mathbf{x}_{0}).
    \end{gathered}
  \end{equation}

  The above equation implies that the distribution of $\wa(A, \mathbf{x}_{0})$ is statistically invariant under an arbitrary orthogonal transformation of $\mathbf{x}_{0}$, therefore its distribution depends only on $|\mathbf{x}_{0}|$. Note that the orbit of a fixed $\mathbf{x}_{0}$ under all orthogonal transformations is $|\mathbf{x}_{0}|\cdot S^{d-1}$, a sphere in $\R^{d}$ with radius $|\mathbf{x}_{0}|$. Furthermore, if $T \sim \mathrm{Haar}(O(d))$, the distribution of $T'\mathbf{x}_{0}$ must be the uniform distribution on $|\mathbf{x}_{0}|\cdot S^{d-1}$. In particular, if we let $T = Q'$, we have
  \begin{equation}
    \label{eq:w0star-ortho-invariant-to-diagonal}
    \wa(A, \mathbf{x}_{0}) \stackrel{d}{=} \wa(\Lambda, \tilde{\mathbf{x}}_{0}), \qquad \tilde{\mathbf{x}}_{0} := Q\mathbf{x}_{0} \sim \mathrm{Unif}\left(|\mathbf{x}_{0}|\cdot S^{d-1} \right).
  \end{equation}

  Conditionally on $|\mathbf{x}_{0}|=r$, $r\in \R^{+}$ and using Corollary~\ref{thm:uniform-sphere-extreme-Smin-corollary}, we know that
  \begin{equation}
    \label{eq:cond-exp-ICIS}
    \begin{split}
      E \left( \wa(A, \mathbf{x}_{0})^{2} \big| |\mathbf{x}_{0}|=r\right) &= E \left( \wa(\Lambda, \tilde{\mathbf{x}}_{0})^{2} \big| |\mathbf{x}_{0}|=r\right) \\
      &= E S_{\min}(T,r) \approx C_{\mathrm{Weibull}(1,1/2)} \cdot \frac{\pi r^{2}}{2d^{3}}.
    \end{split}
  \end{equation}
  Here $C_{\mathrm{Weibull}(1,1/2)}$ is the second order moment of $\mathrm{Weibull}(1,1/2)$.

  On the other hand, Assumption (b) states that as a function of $d$, $E |\mathbf{x}_{0}|^{2}$ grows at rate $o(d^{3})$. Therefore
  \begin{equation*}
    \begin{split}
      E \left( \wa(A, \mathbf{x}_{0})^{2} \right)  &= E_{r} \left(E \left( \wa(A, \mathbf{x}_{0})^{2} \big| |\mathbf{x}_{0}|=r\right) \right) \\
      &\approx C_{\mathrm{Weibull}(1,1/2)} \cdot \frac{\pi E |\mathbf{x}_{0}|^{2}}{2d^{3}} = o(1) \to 0.
    \end{split}
  \end{equation*}

  As a special case, if the second order moments of $x_{0,i}$ are bounded by a constant (as mentioned in Remark~\ref{remark:GOE-assumptions}), $E |\mathbf{x}_{0}|^{2}$ will grow at a speed of $O(d^{1})$, therefore $E \left( \wa(A, \mathbf{x}_{0})^{2} \right) = O(d^{-2})$.

\end{proof}

Below we propose a conjecture on asymmetric matrices.
\begin{conjecture}\label{thm:GinOE}
  We conjecture that Equation~\eqref{eq:w0star-converges-to-zero} is true for system matrix $A$ sampled from the Ginibre ensemble and initial condition $\mathbf{x}_{0}$ such that the second order moments of $x_{0,i}$ are bounded by a constant.
\end{conjecture}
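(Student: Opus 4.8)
The plan is to mirror the proof of Theorem~\ref{thm:GOE-like}, replacing each step that used the Hermitian (orthogonal) structure of the eigenbasis by its non-Hermitian counterpart. Almost surely a Ginibre matrix has $d$ distinct eigenvalues, so write $A = Q\Lambda Q^{-1}$ for its real Jordan decomposition, with $\Lambda$ block-diagonal ($1\times 1$ real blocks and $2\times 2$ blocks for conjugate pairs). The Ginibre ensemble is invariant under $A\mapsto OAO'$ for every fixed $O\in O(d)$, and $\mathbf{x}_{0}$ is independent of $A$; exactly as in~\eqref{w0star-ortho-invariant} this forces the law of $\wa(A,\mathbf{x}_{0})$ to depend on $\mathbf{x}_{0}$ only through $|\mathbf{x}_{0}|$, and, conditionally on $|\mathbf{x}_{0}| = r$, one may replace $\mathbf{x}_{0}$ by $rV$ with $V\sim \mathrm{Unif}(S^{d-1})$ independent of $A$. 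Since the hypothesis $\sup_{i} Ex_{0,i}^{2} < \infty$ gives $E|\mathbf{x}_{0}|^{2} = O(d)$ by Remark~\ref{remark:GOE-assumptions}, it would suffice to prove the conditional bound $E(\wa(A,\mathbf{x}_{0})^{2}\mid |\mathbf{x}_{0}|=r) = o(r^{2}/d)$ uniformly in $r$, and then integrate over $r$.

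Conditionally on $Q$, one must understand the behaviour near the origin of the $K$ quantities $|w_{0,k}|$, where $w_{0,k}$ is the $k$-th (scalar or $\R^{2}$-valued) block of $\tilde V := Q^{-1}(rV)$. Geometrically, up to the conditioning of the eigenbasis, $\wa = \min_{k}|w_{0,k}|$ is the Euclidean distance from $\mathbf{x}_{0}$ to the nearest proper invariant subspace $L_{-k} := \mathrm{span}\bigcup_{j\ne k} L_{j}$: one has the sandwich
\[
  |w_{0,k}|\,\sigma_{k} \;\le\; \mathrm{dist}(\mathbf{x}_{0}, L_{-k}) \;\le\; |w_{0,k}|\,\|Q_{k}\|,
\]
where $\sigma_{k} = \|Q_{k}\|\sin\theta_{k}$ and $\theta_{k}$ is the minimal principal angle between $L_{k}$ and $L_{-k}$. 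Each $L_{-k}$ has codimension $1$ or $2$ and is independent of $V$, so $\mathrm{dist}(\mathbf{x}_{0},L_{-k})^{2}/r^{2}$ is a $\mathrm{Beta}(c_{k}/2,(d-c_{k})/2)$ variable, whose density near $0$ is of order $\sqrt{d}\,x^{c_{k}/2-1}$; an extreme-value argument over the $K\asymp d$ subspaces (paralleling Lemma~\ref{thm:uniform-sphere-extreme-Smin}) then gives $\min_{k}\mathrm{dist}(\mathbf{x}_{0},L_{-k}) = o_{P}(r/\sqrt{d})$. Combining with the sandwich, $\wa \le \min_{k}\mathrm{dist}(\mathbf{x}_{0},L_{-k})/\min_{k}\sigma_{k}$, so the whole problem reduces to a lower bound on $\min_{k}\sigma_{k}$, i.e.\ to ruling out that some eigenspace $L_{k}$ is nearly contained in the span of the others. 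That quantity $\sigma_{k}^{-2}$ is comparable to the self-overlap of the $k$-th (generalized) eigenvector of the real Ginibre ensemble, which is typically polynomial in $d$ but blows up near pairs of nearly-coincident eigenvalues, at a rate tied to the inverse squared eigenvalue gap.

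The hard part --- and the reason this is stated only as a conjecture --- is that the two ingredients above are \emph{coupled}: a small eigenvalue gap simultaneously makes $\min_{k}\sigma_{k}$ small and makes the corresponding two subspaces $L_{-k}$ nearly coincide, which in turn degrades the extreme-value statistics of $\min_{k}\mathrm{dist}(\mathbf{x}_{0},L_{-k})$. Treating the two effects crudely and separately --- using the $\asymp d^{-1}$ order of the minimal eigenvalue gap and the $\asymp d^{1/2}$ order of $\|A\|$ to get only $\min_{k}\sigma_{k} \gtrsim d^{-3/2}$, against $\min_{k}\mathrm{dist} \lesssim r/\sqrt{d}$ --- yields $E\wa^{2} = O(1)$, which is on the boundary and does not close the argument. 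Establishing the conjecture thus appears to require either a sharp joint tail estimate linking the smallest eigenvalue gap of a real Ginibre matrix, the associated eigenvector overlap, and the angle between the (independent) $\mathbf{x}_{0}$ and the relevant invariant subspace --- exploiting the independence of $\mathbf{x}_{0}$ to recover the missing powers of $d$ --- or a direct second-moment computation of $E\,\mathrm{tr}((Q'Q)^{-1})$-type quantities paired with a uniform $\mathbf{x}_{0}$, bypassing the minimum-of-$d$ reduction altogether. Obtaining such fine control of the non-orthogonal eigenbasis of the \emph{real} Ginibre ensemble --- harder than the complex case, where the Chalker--Mehlig overlap formulas are cleanest --- is the essential missing input.
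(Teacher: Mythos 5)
The statement you were asked to prove is labelled a \emph{conjecture}, and the paper in fact offers no proof of it: the authors state explicitly that they ``were not able to formally prove Conjecture~\ref{thm:GinOE} due to technical difficulties ($Q$ is no longer an orthogonal matrix\dots)'' and support it only by the mini-simulation of Section~\ref{sec:ident-highdim}. So there is no proof in the paper to compare yours against, and your submission is, by your own account, not a proof either --- it is a roadmap that stalls at a bound $E(\wa)^{2}=O(1)$, which does not imply convergence to zero. Judged as a proof it therefore has a genuine, named gap: the missing ingredient is a lower bound on $\min_{k}\sigma_{k}$ (equivalently, control of the eigenvector overlaps / condition number of the non-orthogonal eigenbasis of a real Ginibre matrix) that is strong enough, jointly with the extreme-value statistics of $\min_{k}\mathrm{dist}(\mathbf{x}_{0},L_{-k})$, to recover a factor $o(1)$. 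You correctly observe that the two quantities are coupled through the smallest eigenvalue gap, which is precisely why the crude product of worst-case bounds lands on the boundary.

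That said, your reduction is sound as far as it goes and is more informative than anything in the paper. The identity $\wa(TAT',\mathbf{x}_{0})=\wa(A,T'\mathbf{x}_{0})$ from~\eqref{w0star-ortho-invariant} uses only that $T$ is orthogonal, not that $Q$ is, so the conditioning on $|\mathbf{x}_{0}|=r$ and the replacement of $\mathbf{x}_{0}$ by a uniform point on $r\cdot S^{d-1}$ survive the passage from GOE to GinOE; the Beta law of $\mathrm{dist}(\mathbf{x}_{0},L_{-k})^{2}/r^{2}$ for a fixed codimension-$c_{k}$ subspace is correct; and your sandwich between $|w_{0,k}|$ and the distance to $L_{-k}$ is the right way to isolate where orthogonality was actually used in the proof of Theorem~\ref{thm:GOE-like} (modulo replacing $\|Q_{k}\|$ by the smallest singular value of $Q_{k}$ on the lower-bound side). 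Two soft spots beyond the main gap are worth flagging if you pursue this: (i) the extreme-value step over the $K\asymp d$ subspaces $L_{-k}$ cannot simply parallel Lemma~\ref{thm:uniform-sphere-extreme-Smin}, because the $L_{-k}$ are massively overlapping and their pairwise geometry degenerates exactly in the small-gap events you are worried about; and (ii) any quantitative input on real-Ginibre overlaps must handle the positive fraction of real eigenvalues separately from the complex pairs, since the $1\times 1$ and $2\times 2$ blocks contribute distances of different codimension. Your diagnosis that a sharp joint tail estimate (gap, overlap, and the independent angle to $\mathbf{x}_{0}$) is the essential missing input is consistent with the authors' own stated obstruction; as submitted, however, the argument establishes nothing beyond what the paper already concedes.
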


Although we were not able to formally prove Conjecture~\ref{thm:GinOE} due to technical difficulties ($Q$ is no longer an orthogonal matrix for an asymmetric matrix sampled from GinOE), this conjecture was numerically verified by us in numerical experiments; for example, the mini-simulation described in Section~\ref{sec:ident-highdim}.

\section{Some Expected Values Related to Random Matrices}
\label{sec:exp-random-matrix}

Below we derive some expected values related to random matrices that are useful in deriving the PIS.

\begin{proposition}\label{thm:mean-random-matrix}
  Let $\bm{\epsilon} \in M_{d\times n}$ be a random matrix with $i.i.d.$ entries $\epsilon_{ij} \sim N(0, \sigma^{2})$. Let $A \in M_{n\times d}$ and $B \in M_{n\times n}$ be two deterministic matrices. We have
  \begin{equation}
    \label{eq:mean-random-matrix1}
    E\left( \bm{\epsilon} A \bm{\epsilon} \right) = \sigma^{2} A', \qquad E \left( \bm{\epsilon} B \bm{\epsilon}' \right) = \sigma^{2} \mathrm{tr}(B)\cdot I_{d}.
  \end{equation}
\end{proposition}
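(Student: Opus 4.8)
The plan is to establish both identities by a direct entrywise computation, using only the second-moment structure of $\bm{\epsilon}$: since the $\epsilon_{ij}$ are $i.i.d.$ with mean $0$ and variance $\sigma^{2}$, one has $E(\epsilon_{ij}\epsilon_{k\ell}) = \sigma^{2}\delta_{ik}\delta_{j\ell}$, where $\delta$ denotes the Kronecker delta. (Gaussianity plays no role beyond supplying these moments; uncorrelatedness would suffice.) No probabilistic machinery is needed beyond bilinearity of expectation together with careful index bookkeeping.

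For the first identity I would first check the shapes: $\bm{\epsilon}$ is $d\times n$ and $A$ is $n\times d$, so $\bm{\epsilon} A\bm{\epsilon}$ is $d\times n$, matching $A'$. Writing out the $(i,\ell)$ entry gives $(\bm{\epsilon} A\bm{\epsilon})_{i\ell} = \sum_{j=1}^{n}\sum_{k=1}^{d}\epsilon_{ij}A_{jk}\epsilon_{k\ell}$; taking expectations, pulling the deterministic $A_{jk}$ outside, and substituting $E(\epsilon_{ij}\epsilon_{k\ell}) = \sigma^{2}\delta_{ik}\delta_{j\ell}$ collapses the double sum to the single surviving term $k=i$, $j=\ell$, leaving $\sigma^{2}A_{\ell i} = \sigma^{2}(A')_{i\ell}$, which is the claim. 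For the second identity the product $\bm{\epsilon} B\bm{\epsilon}'$ is $d\times d$, and its $(i,\ell)$ entry is $(\bm{\epsilon} B\bm{\epsilon}')_{i\ell} = \sum_{j,k}\epsilon_{ij}B_{jk}(\bm{\epsilon}')_{k\ell} = \sum_{j,k}\epsilon_{ij}B_{jk}\epsilon_{\ell k}$, where the transpose turns $(\bm{\epsilon}')_{k\ell}$ into $\epsilon_{\ell k}$. Taking expectations and using $E(\epsilon_{ij}\epsilon_{\ell k}) = \sigma^{2}\delta_{i\ell}\delta_{jk}$ forces $i=\ell$ and $j=k$, so the entry equals $\sigma^{2}\delta_{i\ell}\sum_{j}B_{jj} = \sigma^{2}\,\mathrm{tr}(B)\,\delta_{i\ell}$, i.e. the matrix $\sigma^{2}\,\mathrm{tr}(B)\,I_{d}$.

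I do not anticipate a genuine obstacle here: the whole argument is two lines of index manipulation, and the only place one can slip is in the transpose/index bookkeeping — specifically, correctly reading off $(\bm{\epsilon}')_{k\ell} = \epsilon_{\ell k}$ and recognizing $\sigma^{2}A_{\ell i}$ as the $(i,\ell)$ entry of $\sigma^{2}A'$. That is the step I would double-check. A more compact but less self-contained alternative would phrase the same computation through $\mathrm{vec}(\cdot)$ and Kronecker-product identities, but the elementary entrywise route above is the shortest fully rigorous path and keeps the proof readable in context.
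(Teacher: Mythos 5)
Your proof is correct and follows essentially the same route as the paper's: a direct entrywise expansion of $\bm{\epsilon} A\bm{\epsilon}$ and $\bm{\epsilon} B\bm{\epsilon}'$, applying $E(\epsilon_{ij}\epsilon_{k\ell})=\sigma^{2}\delta_{ik}\delta_{j\ell}$ to collapse the double sums. The index bookkeeping you flag as the only delicate step is handled correctly, and your observation that uncorrelatedness (not Gaussianity) suffices is a valid, if minor, strengthening.
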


\begin{proof}
  Let $U = E\left( \bm{\epsilon} A \bm{\epsilon} \right)$ and $V = E \left( \bm{\epsilon} B \bm{\epsilon}' \right)$, we know that
  \begin{equation*}
    \begin{split}
      U_{ij} &= \sum_{l=1}^{n}\sum_{k=1}^{d} A_{lk} E (\epsilon_{il} \epsilon_{kj}) = A_{ji} E\epsilon_{ij}^{2} = \sigma^{2} A_{ji}. \\
      V_{ij} &= \sum_{l=1}^{n}\sum_{k=1}^{n} B_{lk} E (\epsilon_{il} \epsilon_{jk}) =
      \begin{cases}
        0, & i\ne j, \\
        \sum_{l=1}^{n} B_{ll} E\epsilon_{il}^{2} = \sigma^{2} \mathrm{tr}(B), & i=j.
      \end{cases}
    \end{split}
  \end{equation*}
\end{proof}

\section{Proof of Main Theorems}
\label{sec:mathematical-proofs}


Before we prove Theorem~\ref{thm:identifiability-based-on-pairwise-inprod}, we need to develop the following lemma that establishes the relationship between the linear independence of a set of functions and the invertibility of their pairwise inner product matrix.
\begin{lemma}\label{thm:func-indep-based-on-pairwise-inprod}
  Let $\mathbf{x}(t) \in \R^{d}$ be a set of \textbf{continuous} functions defined on $[0, T]$. $\mathbf{x}(t)$ is linearly independent iff its pairwise inner product matrix $\Sigma_{\mathbf{x}\mathbf{x}} := \Sigma_{\mathbf{x}\mathbf{x}}$ is invertible.
\end{lemma}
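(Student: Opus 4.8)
The plan is to prove both directions via the quadratic form $\mathbf{c}'\Sigma_{\mathbf{x}\mathbf{x}}\mathbf{c}$ for a constant vector $\mathbf{c}\in\R^d$. Recall from Equation~\eqref{eq:zero-eigenvector-for-pairwise-inner-prod} that $\Sigma_{\mathbf{x}\mathbf{x}}$ is singular if and only if there exists a nonzero $\mathbf{c}$ with $\mathbf{c}'\Sigma_{\mathbf{x}\mathbf{x}}\mathbf{c}=0$, so it suffices to connect this vanishing to linear dependence of the coordinate functions $x_1(t),\dots,x_d(t)$.

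First I would compute
\begin{equation*}
  \mathbf{c}'\Sigma_{\mathbf{x}\mathbf{x}}\mathbf{c}
  = \sum_{i,j} c_i c_j \int_0^T x_i(t)x_j(t)\,\ud t
  = \int_0^T \left( \sum_{i=1}^d c_i x_i(t) \right)^{2} \ud t
  = \| g_{\mathbf{c}} \|_{L^2}^2,
\end{equation*}
where $g_{\mathbf{c}}(t) := \sum_i c_i x_i(t)$. This identity immediately gives the forward (contrapositive) direction: if $\mathbf{x}(t)$ is linearly dependent, pick a nonzero $\mathbf{c}$ with $g_{\mathbf{c}}\equiv 0$ on $[0,T]$; then $\mathbf{c}'\Sigma_{\mathbf{x}\mathbf{x}}\mathbf{c}=0$, so $\Sigma_{\mathbf{x}\mathbf{x}}$ is singular, hence not invertible.

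For the converse, suppose $\Sigma_{\mathbf{x}\mathbf{x}}$ is singular. Then there is a nonzero $\mathbf{c}$ with $\|g_{\mathbf{c}}\|_{L^2}^2 = 0$, i.e. $\int_0^T g_{\mathbf{c}}(t)^2\,\ud t = 0$. Here is where continuity enters: a nonnegative continuous function with zero integral over $[0,T]$ must be identically zero on $[0,T]$ (if $g_{\mathbf{c}}(t_0)\neq 0$ for some $t_0$, continuity forces $g_{\mathbf{c}}^2 > \varepsilon$ on a subinterval of positive length, contradicting the vanishing integral). Hence $g_{\mathbf{c}}\equiv 0$, which is exactly a nontrivial linear relation among $x_1(t),\dots,x_d(t)$, so $\mathbf{x}(t)$ is linearly dependent. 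Contrapositively, linear independence of $\mathbf{x}(t)$ forces $\Sigma_{\mathbf{x}\mathbf{x}}$ nonsingular; since $\Sigma_{\mathbf{x}\mathbf{x}}$ is a square symmetric matrix, nonsingular is equivalent to invertible.

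The only real subtlety — and the step I would be most careful about — is the use of continuity: without it, a function vanishing almost everywhere but not everywhere would break the equivalence between ``$g_{\mathbf{c}}=0$ in $L^2$'' and ``$x_i(t)$ are linearly dependent as functions.'' Everything else is a routine rewriting of the quadratic form as an $L^2$ norm of a linear combination, plus the standard equivalence between singularity of a symmetric matrix and existence of a nonzero null vector. (Solutions of a linear ODE are real-analytic, hence continuous, so this hypothesis is automatically met when Lemma~\ref{thm:func-indep-based-on-pairwise-inprod} is later applied to the trajectory $\mathbf{x}(t|A,\mathbf{x}_0)$.)
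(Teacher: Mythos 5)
Your proof is correct and follows essentially the same route as the paper's: both directions are handled through the identity $\mathbf{c}'\Sigma_{\mathbf{x}\mathbf{x}}\mathbf{c}=\|\mathbf{c}'\mathbf{x}(t)\|_{L^{2}}^{2}$, with continuity invoked exactly where the paper invokes it to upgrade ``zero in $L^{2}$'' to ``identically zero.'' If anything, your write-up is slightly cleaner, since the paper's version contains a small slip (it says ``$\mathbf{x}(t)$ is essentially zero'' where it means the linear combination $\mathbf{v}_{0}'\mathbf{x}(t)$).
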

\begin{proof} \quad
  \begin{itemize}
  \item The ``$\Longleftarrow$'' part. Let us assume that member functions in $\mathbf{x}(t)$ are linearly dependent. There exist nonzero vector $\mathbf{c}$ such that $\mathbf{c}' \mathbf{x}(t) \equiv 0$ for all $t\in [0,T]$. Due to the bilinearity of the pairwise inner product matrix, we have
    \begin{equation*}
      \mathbf{c}' \Sigma_{\mathbf{x}\mathbf{x}} \mathbf{c} =
      \uinner{\mathbf{c}'\mathbf{x}(t)}{\mathbf{c}'\mathbf{x}(t)} = \uinner{0}{0} = 0.
    \end{equation*}
  \item The ``$\Longrightarrow$'' part. As we mentioned earlier, $\Sigma_{\mathbf{x}\mathbf{x}}$ is singular if and only if there exists nonzero $\mathbf{v}_{0} \in \R^{d}$ such that
    \begin{equation*}
      \mathbf{v}_{0}' \Sigma_{\mathbf{x}\mathbf{x}} \mathbf{v}_{0} = 0.
    \end{equation*}
    Due to bilinearity, we know $\uinner{\mathbf{v}_{0}'\mathbf{x}(t)}{\mathbf{v}_{0}'\mathbf{x}(t)} = \|\mathbf{v}_{0}'\mathbf{x}(t)\|_{L^{2}}^{2}= 0$. This implies that $\mathbf{x}(t)$ is essentially zero on $[0,T]$. Together with the \textbf{continuity} assumption of $\mathbf{x}(t)$, we know that $\mathbf{x}(t)$ must be zero for all $t \in [0,T]$.
  \end{itemize}
\end{proof}

\begin{corollary}\label{thm:lin-comb-functions-invertible-if-C-invertible}
  Assume that $\bm{\xi}(t) := (\xi_{1}(t), \xi_{2}(t), \dots, \xi_{d}(t))'$ is a set of linearly independent functions, $C \in M_{d\times d}$ is a matrix. Set of functions $\mathbf{x}(t) := C \bm{\xi}(t)$ is linearly independent iff matrix $C$ is invertible.
\end{corollary}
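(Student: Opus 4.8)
The statement to prove is Corollary~\ref{thm:lin-comb-functions-invertible-if-C-invertible}, which characterizes when $\mathbf{x}(t) := C\bm{\xi}(t)$ is linearly independent in terms of invertibility of $C$, given that $\bm{\xi}(t)$ is already linearly independent. My plan is to reduce everything to Lemma~\ref{thm:func-indep-based-on-pairwise-inprod} by computing the pairwise inner product matrix $\Sigma_{\mathbf{x}\mathbf{x}}$ in terms of $\Sigma_{\bm{\xi}\bm{\xi}}$ and $C$. The key observation is bilinearity: since $\mathbf{x}(t) = C\bm{\xi}(t)$, we have
\[
  \Sigma_{\mathbf{x}\mathbf{x}} = \uinner{C\bm{\xi}(t)}{C\bm{\xi}(t)} = C\,\uinner{\bm{\xi}(t)}{\bm{\xi}(t)}\,C' = C\,\Sigma_{\bm{\xi}\bm{\xi}}\,C'.
\]
Because $\bm{\xi}(t)$ is linearly independent and its components are (implicitly) continuous, Lemma~\ref{thm:func-indep-based-on-pairwise-inprod} guarantees $\Sigma_{\bm{\xi}\bm{\xi}}$ is invertible; in fact it is symmetric positive definite.

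From here the argument is purely linear-algebraic. For the ``$\Longleftarrow$'' direction, if $C$ is invertible then $\Sigma_{\mathbf{x}\mathbf{x}} = C\Sigma_{\bm{\xi}\bm{\xi}}C'$ is a product of three invertible matrices, hence invertible, so $\mathbf{x}(t)$ is linearly independent by Lemma~\ref{thm:func-indep-based-on-pairwise-inprod} (applied in the reverse direction). For the ``$\Longrightarrow$'' direction, I argue the contrapositive: if $C$ is singular, pick a nonzero $\mathbf{c}$ with $C'\mathbf{c} = 0_d$; then $\mathbf{c}'\Sigma_{\mathbf{x}\mathbf{x}}\mathbf{c} = (C'\mathbf{c})'\Sigma_{\bm{\xi}\bm{\xi}}(C'\mathbf{c}) = 0$, so $\Sigma_{\mathbf{x}\mathbf{x}}$ is singular, and again Lemma~\ref{thm:func-indep-based-on-pairwise-inprod} forces $\mathbf{x}(t)$ to be linearly dependent. (Equivalently, one could observe directly that $C$ singular yields a nonzero $\mathbf{d}$ in $\ker C$, so $\mathbf{d}'\mathbf{x}(t) = \mathbf{d}'C\bm{\xi}(t)$; but this uses $\ker C$ rather than $\ker C'$, so the cleaner route is via the inner product matrix, or alternatively note $\operatorname{rank} C = \operatorname{rank} C'$.)

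There is essentially no hard part here — the only subtlety worth flagging is that Lemma~\ref{thm:func-indep-based-on-pairwise-inprod} is stated for continuous functions, so I should note that $\mathbf{x}(t) = C\bm{\xi}(t)$ inherits continuity from $\bm{\xi}(t)$, and that the linear independence of $\bm{\xi}(t)$ already presupposes (or should be taken together with) continuity so that the lemma applies to give invertibility of $\Sigma_{\bm{\xi}\bm{\xi}}$. The proof is then just: (1) record $\Sigma_{\mathbf{x}\mathbf{x}} = C\Sigma_{\bm{\xi}\bm{\xi}}C'$ by bilinearity; (2) invoke Lemma~\ref{thm:func-indep-based-on-pairwise-inprod} to get $\Sigma_{\bm{\xi}\bm{\xi}}$ invertible; (3) observe that a congruence $C(\cdot)C'$ of an invertible symmetric matrix is invertible iff $C$ is invertible; (4) apply Lemma~\ref{thm:func-indep-based-on-pairwise-inprod} once more to translate invertibility of $\Sigma_{\mathbf{x}\mathbf{x}}$ back to linear independence of $\mathbf{x}(t)$.
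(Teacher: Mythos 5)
Your proposal is correct and follows essentially the same route as the paper: both compute $\Sigma_{\mathbf{x}\mathbf{x}} = C\,\Sigma_{\bm{\xi}\bm{\xi}}\,C'$ by bilinearity, invoke Lemma~\ref{thm:func-indep-based-on-pairwise-inprod} to get invertibility of $\Sigma_{\bm{\xi}\bm{\xi}}$, and conclude that the congruence is invertible iff $C$ is. You simply spell out the congruence step and the continuity caveat in more detail than the paper does, which is a welcome addition but not a different argument.
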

\begin{proof}
  Based on the bilinearity, we know that
  \begin{equation*}
    \Sigma_{\mathbf{x}\mathbf{x}} = C \Sigma_{\bm{\xi}} C'.
  \end{equation*}
  Lemma~\ref{thm:func-indep-based-on-pairwise-inprod} says that $\Sigma_{\bm{\xi}}$ is invertible.  So $\Sigma_{\mathbf{x}\mathbf{x}}$ is invertible (hence linearly independent) iff $C$ is invertible.
\end{proof}

The next lemma shows that when all eigenvalues are distinct (which is our assumption), the basic ``building blocks'' of $\mathbf{x}(t)$ are linearly independent. To this end, we first define $\mathbf{z}(t) = (z_{1}(t), z_{2}(t), \dots, z_{d}(t))'$ to be a vector of \emph{fundamental solutions} of Equation~\eqref{eq:lin-ode} in which $A$ has Jordan canonical form specified in Equation~\eqref{eq:jordan-decomp}. Elements in $\mathbf{z}(t)$ can be represented as follows
\begin{equation}\label{eq:fundamental-solutions}
  z_{i}(t) =
  \begin{cases}
    e^{c_{i}t}, & k=1, 2, \dots, K_{1}, \\
    e^{a_{k}}\cos b_{k}t, & k=K_{1}+\frac{i-K_{1}+1}{2},\; i=K_{1}+1, K_{1}+3, \dots, d-1. \\
    e^{a_{k}}\sin b_{k}t, & k=K_{1}+\frac{i-K_{1}}{2},\; i=K_{1}+2, K_{1}+4, \dots, d.
  \end{cases}
\end{equation}

As a reminder, we point out that the relationship between $i$ and $k$ is provided in Equation~\eqref{eq:i-k-functions}. In particular, the $k$th pair of complex eigenvalues $a_{k} \pm b_{k}i$, for $k=K_{1}+1, \dots, K$, corresponds with a pair of fundamental solutions $(z_{i(k)}(t), z_{i(k)+1}(t))$, where $i(k):=2k-K_{1}-1$.

\begin{lemma}\label{thm:building-blocks-indep}
  As a set of functions, $\mathbf{z}(t)$ is linearly independent iff all the eigenvalues of $A$ are distinct.
\end{lemma}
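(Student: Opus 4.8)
The plan is to prove the claim by exhibiting an explicit correspondence between linear combinations of $\mathbf{z}(t)$ that vanish identically and coincidences among the eigenvalues. Since $\mathbf{z}(t)$ consists of functions of the form $e^{c_i t}$ (for the real eigenvalues) and $e^{a_k t}\cos b_k t$, $e^{a_k t}\sin b_k t$ (for the complex pairs $a_k \pm b_k i$), the cleanest route is to complexify: over $\C$, the pair $(e^{a_k t}\cos b_k t, e^{a_k t}\sin b_k t)$ spans the same space as $(e^{(a_k+b_k i)t}, e^{(a_k-b_k i)t})$. Hence $\mathrm{span}_\C\{\mathbf{z}(t)\}$ equals $\mathrm{span}_\C\{e^{\lambda_1 t},\dots,e^{\lambda_d t}\}$, where $\lambda_1,\dots,\lambda_d$ are the (complex) eigenvalues of $A$ listed with the multiplicities coming from the Jordan structure in Equation~\eqref{eq:jordan-decomp}. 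Real linear independence of $\mathbf{z}(t)$ is equivalent to complex linear independence of the complexified set (a change of basis over $\C$ with invertible coefficient matrix preserves (in)dependence), so it suffices to show $\{e^{\lambda_1 t},\dots,e^{\lambda_d t}\}$ is linearly independent over $\C$ iff the $\lambda_i$ are pairwise distinct.

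First I would handle the ``only if'' direction, which is immediate: if $\lambda_i = \lambda_j$ for some $i\ne j$ then $z$-functions repeat (in the complexified picture $e^{\lambda_i t}$ appears twice), so the set is trivially dependent; tracing this back through the real-to-complex change of basis, one checks the corresponding real functions also repeat or become dependent. For the ``if'' direction, assume the $\lambda_i$ are distinct and suppose $\sum_{i=1}^d c_i e^{\lambda_i t} \equiv 0$ on $[0,T]$. The standard argument is the Wronskian / Vandermonde computation: the functions $e^{\lambda_i t}$ are real-analytic, so vanishing on $[0,T]$ forces vanishing of all derivatives at $t=0$, giving $\sum_i c_i \lambda_i^m = 0$ for all $m=0,1,\dots,d-1$; this is a homogeneous linear system whose coefficient matrix is the Vandermonde matrix $(\lambda_i^m)$, which is invertible precisely because the $\lambda_i$ are distinct, forcing $c_i = 0$. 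Alternatively one can induct on $d$, differentiating and subtracting $\lambda_d$ times the relation to kill the last term, which reduces to the $d-1$ case.

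There is one bookkeeping subtlety that I expect to be the main obstacle to writing cleanly: being careful that the change of basis from the real trigonometric pairs to the complex exponential pairs is genuinely invertible and that ``real linear dependence of $\mathbf{z}(t)$'' matches ``complex linear dependence of the complexified family.'' Concretely, the $2\times 2$ block $\begin{pmatrix} 1 & 1 \\ i & -i \end{pmatrix}$ (up to the factor $e^{a_k t}$) converts $(\cos b_k t, \sin b_k t)$ to $(e^{(a_k+ib_k)t}, e^{(a_k - ib_k)t})$ and has determinant $-2i \ne 0$, so real independence of the $\cos/\sin$ pair is equivalent to complex independence of the exponential pair, and more globally the block-diagonal change-of-basis matrix is invertible; a real dependence among the $\mathbf{z}_i$ with real coefficients is in particular a complex dependence, and conversely a complex dependence among the exponentials pushes back to a (possibly complex, but then by taking real and imaginary parts, real) dependence among the $\mathbf{z}_i$. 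Once this equivalence is stated carefully, the result follows from the Vandermonde fact. I would also invoke Corollary~\ref{thm:lin-comb-functions-invertible-if-C-invertible} implicitly here, since it is exactly the statement that applying an invertible matrix to an independent family preserves independence, which is the engine behind the complexification step.
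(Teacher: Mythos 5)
Your proposal is correct, and at bottom it uses the same two ingredients as the paper's proof: the invertible $2\times 2$ complexification blocks and the nonvanishing of the Vandermonde determinant $\prod_{i<j}(\lambda_{j}-\lambda_{i})$. The organization, however, is genuinely different. You change basis \emph{first}, replacing each real pair $\left(e^{a_{k}t}\cos b_{k}t,\; e^{a_{k}t}\sin b_{k}t\right)$ by $\left(e^{(a_{k}+ib_{k})t},\, e^{(a_{k}-ib_{k})t}\right)$ via the block $\left(\begin{smallmatrix}1 & 1 \\ i & -i\end{smallmatrix}\right)$ with determinant $-2i\neq 0$, observe that for real-valued functions $\R$-linear and $\C$-linear independence coincide, and then reduce to the classical fact that distinct complex exponentials are independent (itself proved by differentiating at a point and inverting a Vandermonde matrix). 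The paper instead stays with the real family throughout: it computes $D^{m}\mathbf{z}(t)=Z(t)C_{m}$, factors the Wronskian as $W(t)=|Z(t)|\cdot|C|$ in \eqref{eq:wronskian}, checks $|Z(t)|\neq 0$ in \eqref{eq:det-Zt}, and then recognizes in \eqref{eq:C-vandermonde} that $C$ is precisely your block change-of-basis matrix times the Vandermonde matrix, giving \eqref{eq:det-C}. So the two proofs differ only in where the complexification is performed; yours is more modular and avoids the explicit block matrix $Z(t)$, while the paper's yields the explicit Wronskian formula as a by-product. Two small points of care for your write-up: Corollary~\ref{thm:lin-comb-functions-invertible-if-C-invertible} is stated over $\R$ with the $L^{2}$ inner product, so for the complex substitution you should either invoke the elementary fact that an invertible linear substitution preserves linear independence (no inner products needed) or redo the corollary with the Hermitian inner product; and in the ``only if'' direction you should explicitly cover the coincidence $a_{k}=a_{k'}$, $b_{k}=-b_{k'}$, where the real functions do not literally repeat but $z_{i(k)+1}=-z_{i(k')+1}$ still gives an immediate dependence.
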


\begin{proof}
  It is easy to see that for real eigenvalues ($k=1,2,\dots, K_{1}$), we have
  \begin{equation*}
    D^{m}z_{k}(t) = c_{k}^{m} z_{k}(t).
  \end{equation*}

  For complex eigenvalues ($k=K_{1}+1, \dots, K$), let
  \begin{equation*}
    \begin{gathered}
      r_{k} := |a_{k}+b_{k}i| = \sqrt{a_{k}^{2} +b_{k}^{2}}, \qquad \theta_{k} := \arg(a_{k}+b_{k}i).\\
      a_{k,m} := r_{k}^{m}\cos(m\theta_{k}), \qquad b_{k,m}  := r_{k}^{m}\sin(m\theta_{k}).
    \end{gathered}
  \end{equation*}

  We have
  \begin{equation*}
    \begin{split}
      D^{m}
      \begin{pmatrix}
        z_{i(k)}(t) \\
        z_{i(k)+1}(t)
      \end{pmatrix} &=
      \begin{pmatrix}
        a_{k} & -b_{k} \\
        b_{k} & a_{k}
      \end{pmatrix}^{m}
      \begin{pmatrix}
        z_{i(k)}(t) \\
        z_{i(k)+1}(t)
      \end{pmatrix} \\
      &= \begin{pmatrix}
        a_{k,m} & -b_{k,m} \\
        b_{k,m} & a_{k,m}
      \end{pmatrix}
      \begin{pmatrix}
        z_{i(k)}(t) \\
        z_{i(k)+1}(t)
      \end{pmatrix} \\
      &=
      \begin{pmatrix}
        a_{k,m}z_{i(k)}(t) -b_{k,m}z_{i(k)+1}(t) \\
        b_{k,m}z_{i(k)}(t) +a_{k,m}z_{i(k)+1}(t)
      \end{pmatrix} \\
      &=
      \begin{pmatrix}
        z_{i(k)}(t) & -z_{i(k)+1}(t) \\
        z_{i(k)+1}(t) & z_{i(k)}(t)
      \end{pmatrix}
      \begin{pmatrix}
        a_{k,m} \\
        b_{k,m}
      \end{pmatrix}.
    \end{split}
  \end{equation*}

  Therefore, we can represent the $m$th derivative of $\mathbf{z}(t)$ as $D^{m}\mathbf{z}(t) = Z(t) C_{m}$, where
  \begin{equation*}
    Z(t) :=
    \begin{pmatrix}
      z_{1}(t) & & & & & \\
      & \ddots & & & & \\
      & & z_{K_{1}}(t) & & & \\
      & & &
      \begin{matrix}
        z_{K_{1}+1}(t) & -z_{K_{1}+2}(t) \\
        z_{K_{1}+2}(t) & z_{K_{1}+1}(t)
      \end{matrix} & & \\
      & & & & \ddots & \\
      & & & & &
      \begin{matrix}
        z_{d-1}(t) & -z_{d}(t) \\
        z_{d}(t) & z_{d-1}(t)
      \end{matrix}
    \end{pmatrix},
  \end{equation*}
  and $C_{m}$ is a vector with the following elements
  \begin{equation*}
    \quad C_{m} :=
    \begin{pmatrix}
      c_{1}^{m} \\
      \vdots \\
      c_{K_{1}}^{m} \\ \hline
      a_{K_{1}+1,m} \\
      b_{K_{1}+1,m} \\ \hline
      \vdots \\ \hline
      a_{K,m} \\
      b_{K,m} \\
    \end{pmatrix}.
  \end{equation*}
  As a special case, it is easy to see that $C_{0} = 1_{d}$. Let us define
  \begin{equation*}
    C :=
    \begin{pmatrix}
      C_{0} & C_{1} & \dots & C_{d}
    \end{pmatrix}.
  \end{equation*}

  The Wronskian of $\mathbf{z}(t)$ can be represented as
  \begin{equation}
    \label{eq:wronskian}
    \begin{split}
      W(t) &= \left| Z(t) C
      \right| = \left| Z(t) \right| \cdot \left| C \right|.
    \end{split}
  \end{equation}

  It is easy to show that
  \begin{equation}
    \label{eq:det-Zt}
    \begin{split}
      \left| Z(t) \right| &= \prod_{k=1}^{K_{1}} z_{k}(t) \cdot \prod_{k=K_{1}+1}^{K} \left( z_{k}^{2} +z_{k+1}^{2}\right) \\
      &= \exp\left( \Big(\sum_{k=1}^{K_{1}} c_{k} +\sum_{k=K_{1}+1}^{K} a_{k} \Big)t \right) \ne 0.
    \end{split}
  \end{equation}

  Therefore, $W(t) \ne 0$, hence elements in $\mathbf{z}(t)$ are linearly independent if and only if $|C| \ne 0$.

  The determinant of $C$ can be computed by the following technique. First, we notice that
  \begin{equation*}
    \begin{gathered}
      \begin{pmatrix}
        1 & i \\
        1 & -i
      \end{pmatrix}
      \begin{pmatrix}
        a_{k,m} \\
        b_{k,m}
      \end{pmatrix} =
      \begin{pmatrix}
        a_{k,m} +b_{k,m}i \\
        a_{k,m} -b_{k,m}i
      \end{pmatrix}
      =
      \begin{pmatrix}
        \lambda_{i(k)}^{m} \\
        \lambda_{i(k)+1}^{m}
      \end{pmatrix}, \qquad k=K_{1}+1, \dots, K. \\
    \end{gathered}
  \end{equation*}
  Here $(\lambda_{i(k)}, \lambda_{i(k)+1})$ are the pair of complex eigenvalues associated with the $k$th block.  Because $ \left(\begin{smallmatrix}
        1 & i \\
        1 & -i
      \end{smallmatrix}\right)^{-1} =
    \left(\begin{smallmatrix}
        \frac{1}{2} & \frac{1}{2} \\
        \frac{-i}{2} & \frac{i}{2}
      \end{smallmatrix}\right)
$, we can rewrite the $C$ matrix as follows
  \begin{equation}\label{eq:C-vandermonde}
    \begin{split}
      C &=
      \begin{pmatrix}
        I_{K_{1}} & & & \\
        &
        \begin{matrix}
          \frac{1}{2} & \frac{1}{2} \\
          \frac{-i}{2} & \frac{i}{2}
        \end{matrix} & & \\
        & & \ddots & \\
        & & & \begin{matrix}
          \frac{1}{2} & \frac{1}{2} \\
          \frac{-i}{2} & \frac{i}{2}
        \end{matrix} \\
      \end{pmatrix}
      \begin{pmatrix}
        1 & \lambda_{1} & \lambda_{1}^{2} & \dots & \lambda_{1}^{d-1} \\
        1 & \lambda_{2} & \lambda_{2}^{2} & \dots & \lambda_{2}^{d-1} \\
        \vdots & \vdots & \vdots & \ddots & \vdots \\
        1 & \lambda_{d} & \lambda_{d}^{2} & \dots & \lambda_{d}^{d-1} \\
      \end{pmatrix}
    \end{split}
  \end{equation}

  Therefore
  \begin{equation}
    \label{eq:det-C}
      \left|C\right| = \left( \frac{i}{2} \right)^{K_{2}} \prod_{1\leqslant i < j \leqslant d} (\lambda_{j}-\lambda_{i}), \qquad \left|C\right| \ne 0 \text{ iff all $\lambda_{i}$ are distinct.} 
  \end{equation}

\end{proof}

\begin{proof}[Proof of Theorem~\ref{thm:identifiability-based-on-pairwise-inprod}]
  Again, I first prove the special case in which all eigenvalues are real. In this case, we know that
  \begin{equation}
    \label{eq:proof-fullrank-pairwise-inprod-all-real}
    \begin{split}
      \mathbf{x}(t) &= e^{At}\mathbf{x}_{0} = Q\cdot \mathrm{diag}(e^{c_{i}t}) \cdot Q^{-1}\mathbf{x}_{0} \\
      &= Q
      \begin{pmatrix}
        w_{0,1}e^{c_{1}t} \\
        w_{0,2}e^{c_{2}t} \\
        \vdots \\
        w_{0,d}e^{c_{d}t} \\
      \end{pmatrix} = Q \cdot \mathrm{diag}(w_{0,i})
      \begin{pmatrix}
        e^{c_{1}t} \\
        \vdots \\
        e^{c_{d}t}
      \end{pmatrix}.
    \end{split}
  \end{equation}
  By Lemma~\ref{thm:building-blocks-indep}, $(e^{c_{1}t}, \dots, e^{c_{d}t})'$ is linearly independent, so $\mathbf{x}(t)$ is linearly independent iff $Q\cdot \mathrm{diag}(w_{0,i})$ is invertible.  Now $Q$ is invertible based on our assumption, according to Corollary~\ref{thm:lin-comb-functions-invertible-if-C-invertible}, whether or not $\mathbf{x}(t)$ is linearly independent only depends on the invertibility of $\mathrm{diag}(w_{0,i})$, which in turn is equivalent to $w_{0,i} \ne 0$ for $i=1,2,\dots, d$. Using Theorem~\ref{thm:identifiability}, we know that $A$ is identifiable at $\mathbf{x}_{0}$ iff all $|w_{0,k}| \ne 0$, which is equivalent to the invertibility of matrices $\mathrm{diag}(w_{0,i})$ and $Q\cdot \mathrm{diag}(w_{0,i})$ (because $Q$ is invertible), which in turn is equivalent to the linear independence of $(e^{c_{1}t}, \dots, e^{c_{d}t})'$ and the invertibility of $\Sigma_{\mathbf{x}\mathbf{x}}$.
\end{proof}

\begin{proof}[Proof of Theorem~\ref{thm:functional-two-stage-perfect}]
  The ``$\Longrightarrow$'' direction. By assumption, $D\mathbf{x}(t) = A \mathbf{x}(t)$, we have
  \begin{equation*}
    \Sigma_{D\mathbf{x}, \mathbf{x}} = \uinner{A \mathbf{x}(t)}{\mathbf{x}(t)} = A \Sigma_{\mathbf{x}\mathbf{x}}.
  \end{equation*}
  Because $\Sigma_{\mathbf{x}\mathbf{x}}$ is invertible, we can multiply both sides of Equation~\eqref{eq:two-stage-equation} by $\Sigma_{\mathbf{x}\mathbf{x}}^{-1}$, therefore   $A$ must be $\Sigma_{D\mathbf{x}, \mathbf{x}} \Sigma_{\mathbf{x}\mathbf{x}}^{-1}$.

  The ``$\Longleftarrow$'' direction. Because $\Sigma_{\mathbf{x}\mathbf{x}}$ is invertible, based on Theorem~\ref{thm:identifiability-based-on-pairwise-inprod}, $A$ is identifiable at $\mathbf{x}_{0}$, which means that there is only one $A$ that can produce the solution curve $\mathbf{x}(t)$, therefore $\Sigma_{D\mathbf{x}, \mathbf{x}} \Sigma_{\mathbf{x}\mathbf{x}}^{-1}$ must be the system matrix.
\end{proof}

\begin{proof}[Proof of Theorem~\ref{thm:functional-two-stage-continuous}]

  Due to Theorem~\ref{thm:functional-two-stage-perfect}, we know that $A = \Sigma_{D\mathbf{x}, \mathbf{x}} \cdot \Sigma_{\mathbf{x}\mathbf{x}}^{-1}$. Therefore, it suffices to show that $\hat{A}$ is a continuous matrix-valued functional of $\hat{\mathbf{x}}(t)$ and $D\hat{\mathbf{x}}(t)$.  Clearly, we have
  \begin{equation}
    \label{eq:SigmaXX-cont}
    \begin{split}
      \uinner{\hat{\mathbf{x}}_{i}}{\hat{\mathbf{x}}_{i'}} -\uinner{\mathbf{x}_{i}}{\mathbf{x}_{i'}} &= \uinner{\hat{\mathbf{x}}_{i} -\mathbf{x}_{i}}{\hat{\mathbf{x}}_{i'}} +\uinner{\mathbf{x}_{i}}{\hat{\mathbf{x}}_{i'}-\mathbf{x}_{i'}}, \\
      \left(\uinner{\hat{\mathbf{x}}_{i}}{\hat{\mathbf{x}}_{i'}} -\uinner{\mathbf{x}_{i}}{\mathbf{x}_{i'}}\right)^{2} &\leqslant (\uinner{\hat{\mathbf{x}}_{i} -\mathbf{x}_{i}}{\hat{\mathbf{x}}_{i'}})^{2} +(\uinner{\mathbf{x}_{i}}{\hat{\mathbf{x}}_{i'}-\mathbf{x}_{i'}})^{2} \\
      &\quad +2|\uinner{\hat{\mathbf{x}}_{i} -\mathbf{x}_{i}}{\hat{\mathbf{x}}_{i'}}|\cdot |\uinner{\mathbf{x}_{i}}{\hat{\mathbf{x}}_{i'}-\mathbf{x}_{i'}}| \\
      &\leqslant \delta_{1}^{2} \left( \|\mathbf{x}_{i}\| +\|\mathbf{x}_{i'}\|\right)^{2}. \\
      \|\Sigma_{\hat{\mathbf{x}}\hat{\mathbf{x}}} -\Sigma_{\mathbf{x}\mathbf{x}}\|_{F}^{2} &= \sum_{i,i'=1}^{d} \left(\uinner{\hat{\mathbf{x}}_{i}}{\hat{\mathbf{x}}_{i'}} -\uinner{\mathbf{x}_{i}}{\mathbf{x}_{i'}}\right)^{2} dt \\
      &\leqslant  2\delta_{1}^{2} \|\mathbf{x}\|^{2}.
    \end{split}
  \end{equation}

  \begin{equation}
    \label{eq:SigmaDxx-cont}
    \begin{split}
      \uinner{D\hat{\mathbf{x}}}{\hat{\mathbf{x}}} -\uinner{D\hat{\mathbf{x}}}{\hat{\mathbf{x}}}  &= \uinner{D\hat{\mathbf{x}}_{i} -D\mathbf{x}_{i}}{\hat{\mathbf{x}}_{i'}} +\uinner{D\mathbf{x}_{i}}{\hat{\mathbf{x}}_{i'}-\mathbf{x}_{i'}}, \\
      \left(\uinner{D\hat{\mathbf{x}}}{\hat{\mathbf{x}}} -\uinner{D\hat{\mathbf{x}}}{\hat{\mathbf{x}}}\right)^{2} &\leqslant \left(\delta_{2}\|\hat{\mathbf{x}}_{i'}\| +\delta_{1}\|D\mathbf{x}_{i}\|\right)^{2} \\
      &\leqslant \left( \delta_{2}\|\mathbf{x}_{i'}\| +\delta_{1} +\delta_{1}\|D\mathbf{x}_{i}\| \right) \leqslant \delta\left( \|\mathbf{x}_{i'}\| +\|D\mathbf{x}_{i}\| +1 \right). \\
      \|\Sigma_{D\hat{\mathbf{x}}, \hat{\mathbf{x}}} -\Sigma_{D\mathbf{x}, \mathbf{x}}\|_{F}^{2} &= \sum_{i,i'=1}^{d} \left(\uinner{D\hat{\mathbf{x}}_{i}}{\hat{\mathbf{x}}_{i'}} -\uinner{D\mathbf{x}_{i}}{\mathbf{x}_{i'}}\right)^{2} dt \\
      &\leqslant  \delta^{2}\left( \|\mathbf{x}\|^{2} +\|D\mathbf{x}\|^{2} +1\right).
    \end{split}
  \end{equation}

  In other words, as matrix-valued functionals of $D\mathbf{x}$ and $\mathbf{x}$, $\Sigma_{\hat{\mathbf{x}}, \hat{\mathbf{x}}}$ and $\Sigma_{D\hat{\mathbf{x}}, \hat{\mathbf{x}}}$ are continuous w.r.t. the $L^{2}$ metric. Based on Theorem~\ref{thm:identifiability-based-on-pairwise-inprod}, the assumption that the ODE system is identifiable at $\mathbf{x}_{0}$ implies that $\Sigma_{\mathbf{x}\mathbf{x}}$ is invertible. Consequently, $\hat{A} := \hat{\Sigma}_{D\mathbf{x}, \mathbf{x}} \cdot  \hat{\Sigma}_{\mathbf{x}\mathbf{x}}^{-1}$ must be a matrix-valued continuous functional of $D\mathbf{x}$ and $\mathbf{x}$.
\end{proof}

\section{Identifiability Issues Induced by Repeated Eigenvalues}
\label{sec:repeated-eigenvalues}

In this subsection, we argue that when there exist repeated eigenvalues (real or complex), $A$ will not be identifiable for any $\mathbf{x}_{0}$. We then provide a representation of $[A]_{\mathbf{x}_{0}}$ in this case.  Although it is well known that the set of matrices with repeated eigenvalues has Lebesgue measure zero in $M_{d\times d}$, In reality, $A$ may still have eigenvalues with similar numerical values due to randomness, hence work presented in this section is relevant for real world applications with uncertainty.

First, we would like to provide an interpretation based on the Jordan canonical decomposition.  Suppose there exists a real eigenvalue of $A$ with multiplicity greater than one. WLOG, we can always rearrange the JCP so that these repeated eigenvalues are ordered as the first $m_{1}$ eigenvalues, $\lambda_{1} = \dots = \lambda_{m_{1}}$. The combined Jordan block for these repeated eigenvalues is
\begin{equation*}
  \Lambda_{1} =
  \begin{pmatrix}
    \lambda_{1} & & \\
    & \ddots & \\
    & & \lambda_{1}
  \end{pmatrix} = I_{m_{1}} \otimes J_{1}, \qquad J_{1} := (\lambda_{1})_{1\times 1}.
\end{equation*}

Similarly, if $A$ has $m_{1}$ pairs of repeated complex eigenvalues, we can arrange the JCP so that they are the first $2m_{1}$ eigenvalues (denoted as $a_{1}\pm b_{1}i$) in the semi-diagonal matrix $\Lambda$, thus the combined Jordan block of them is the following $2m_{k}\times 2m_{k}$-dimensional semi-diagonal matrix
\begin{equation*}
  \Lambda_{1} = I_{m_{1}}\otimes J_{1}, \qquad J_{1} :=
  \begin{pmatrix}
    a_{1} & -b_{1} \\
    b_{1} & a_{1}
  \end{pmatrix}.
\end{equation*}

In either case, the combined Jordan block of the repeated eigenvalue, $\Lambda_{1} = I_{m_{1}}\otimes J_{1}$, is the top-left block of the middle matrix in the JCP of $A$:
\begin{equation}\label{eq:JCP-repeated-eigen}
  \begin{gathered}
    A =
    \begin{pmatrix}
      Q_{1} & Q_{-1}
    \end{pmatrix}
    \begin{pmatrix}
      \Lambda_{1} & \\
      & \Lambda_{-1}
    \end{pmatrix}
    \begin{pmatrix}
      R_{1} \\
      R_{-1}
    \end{pmatrix} = Q_{1} \Lambda_{1} R_{1} + Q_{-1} \Lambda_{-1} R_{-1}.
  \end{gathered}
\end{equation}

Here $\Lambda_{-1}$ represents eigenvalues not in $\Lambda_{1}$, $Q_{1}$ is the collection of the first $m_{1}$ (or $2m_{1}$, if the repeated eigenvalues are complex) columns of $Q$, which is a basis for the invariant subspace associated with $\Lambda_{1}$. Likewise, $Q_{-1}$ is the collection of remaining column vectors of $Q$ and a basis of the invariant subspace associated with $\Lambda_{-1}$.  $R_{1}$ and $R_{-1}$ are the corresponding left- and right-submatrices of $Q^{-1}$.

When considered as a $m_{1}\times m_{1}$-dimensional (if the repeated eigenvalue is real) or $2m_{1}\times 2m_{1}$-dimensional (if the repeated eigenvalue is complex) \emph{subsystem}, $\Lambda_{1}$ is not identifiable at any initial point $\mathbf{v} \in \R^{m_{1}}$ and the solution curves satisfies the following equation
\begin{equation}
  \label{eq:matrix-exponential-repeated}
  e^{tA} \mathbf{v} = e^{t(A + UDU')} \mathbf{v}.
\end{equation}
Here $D$ is an arbitrary matrix and $U$ is a semi-orthogonal matrix that depends on the initial condition $\mathbf{v}$. Technical details of these results are summarized in Lemmas~\ref{thm:matrix-exponential-repeated-real} and \ref{thm:matrix-exponential-repeated-complex} below.

\begin{lemma}\label{thm:matrix-exponential-repeated-real}
  Let $\lambda_{1}\in \R$ and  $A = \lambda_{1}I_{m_{1}}$, $\mathbf{v} \in \R^{m_{1}}$.  We have
  \begin{equation*}
    e^{tA} \mathbf{v} = e^{t(A + UDU')} \mathbf{v}.
  \end{equation*}
  Here $U \in M_{m_{1}\times (m_{1}-1)}$ is a semi-orthogonal matrix such that $U'\mathbf{v} = \mathbf{0}$, and $D$ is an arbitrary $(m_{1}-1)\times (m_{1}-1)$-dimensional matrix.
\end{lemma}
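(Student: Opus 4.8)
The plan is to exploit the fact that $A = \lambda_{1} I_{m_{1}}$ is a scalar matrix, so the rank-deficient perturbation $UDU'$ cannot alter the action of the flow on the particular direction $\mathbf{v}$. First I would observe that since $U'\mathbf{v} = \mathbf{0}$ we have $(UDU')\mathbf{v} = U D (U'\mathbf{v}) = \mathbf{0}$, and therefore
\[
  (A + UDU')\,\mathbf{v} = \lambda_{1}\mathbf{v} + UDU'\mathbf{v} = \lambda_{1}\mathbf{v}.
\]
In other words, $\mathbf{v}$ is an eigenvector of the perturbed matrix $A + UDU'$ with the \emph{same} eigenvalue $\lambda_{1}$ that it trivially has as an eigenvector of $A = \lambda_{1} I_{m_{1}}$.

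From here the conclusion is immediate from the matrix-exponential representation of the solution curves: for any square matrix $M$ and any eigenvector $\mathbf{w}$ of $M$ with eigenvalue $\mu$ one has $M^{n}\mathbf{w} = \mu^{n}\mathbf{w}$, hence $e^{tM}\mathbf{w} = \sum_{n\ge 0}\frac{t^{n}}{n!}M^{n}\mathbf{w} = e^{\mu t}\mathbf{w}$. Applying this with $M = A$ and with $M = A + UDU'$, both evaluated at $\mathbf{w} = \mathbf{v}$ with $\mu = \lambda_{1}$, yields
\[
  e^{tA}\mathbf{v} = e^{\lambda_{1}t}\mathbf{v} = e^{t(A + UDU')}\mathbf{v},
\]
which is the claimed identity. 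An equivalent route is to note that $\lambda_{1} I_{m_{1}}$ commutes with $UDU'$, so $e^{t(A+UDU')} = e^{\lambda_{1}t}\,e^{tUDU'}$, and then use $(UDU')^{n}\mathbf{v} = \mathbf{0}$ for all $n\ge 1$ to get $e^{tUDU'}\mathbf{v} = \mathbf{v}$; this version is perhaps the more natural one to adapt to the complex-eigenvalue analogue in Lemma~\ref{thm:matrix-exponential-repeated-complex}.

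There is essentially no obstacle in this argument; the only remarks worth recording are that such a $U$ exists whenever $\mathbf{v}\ne\mathbf{0}$ (take an orthonormal basis of the orthogonal complement of $\mathrm{span}(\mathbf{v})$, which has dimension $m_{1}-1$), with the $\mathbf{v}=\mathbf{0}$ case being trivial, and that the computation uses only $U'\mathbf{v} = \mathbf{0}$ — semi-orthogonality of $U$ plays no role in this particular lemma, though it is the convenient normalization to carry forward into the closed-form description of $[A]_{\mathbf{x}_{0}}$ for repeated eigenvalues. Geometrically, the lemma simply records that $\mathrm{span}(\mathbf{v})$ is a common (proper, when $m_{1}\ge 2$) invariant subspace of $A$ and of every perturbation $A + UDU'$ of the prescribed form, on which both systems generate the identical trajectory $e^{\lambda_{1}t}\mathbf{v}$, which is precisely why such a subsystem is unidentifiable at $\mathbf{v}$.
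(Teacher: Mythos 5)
Your proof is correct, but it takes a genuinely different and more elementary route than the paper's. You argue pointwise: $U'\mathbf{v}=\mathbf{0}$ forces $(UDU')\mathbf{v}=\mathbf{0}$, so $\mathbf{v}$ is an eigenvector of $A+UDU'$ with the same eigenvalue $\lambda_{1}$ it has for $A$, and the power series of the matrix exponential then gives $e^{t(A+UDU')}\mathbf{v}=e^{\lambda_{1}t}\mathbf{v}=e^{tA}\mathbf{v}$. The paper instead conjugates: it completes $\mathbf{v}$ (normalized) and $U$ into an orthogonal matrix $W=\begin{pmatrix}\mathbf{v} & U\end{pmatrix}$, writes $A+UDU'=W\tilde{D}W'$ with $\tilde{D}=\mathrm{diag}\left(\lambda_{1},\,\lambda_{1}I_{m_{1}-1}+D\right)$, and evaluates the block-diagonal exponential at $W'\mathbf{v}=(1,\mathbf{0}')'$. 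Your version is shorter, avoids the explicit similarity transform, and — as you correctly observe — shows that the identity needs only $U'\mathbf{v}=\mathbf{0}$, not semi-orthogonality of $U$; the paper's argument, by contrast, genuinely uses orthogonality of $W$ (and hence $U'U=I$) to justify the decomposition. What the paper's heavier approach buys is the explicit spectral structure of the perturbed matrix $W\tilde{D}W'$, which is the form quoted in the closed-form description of the unidentifiable class in Theorem~\ref{thm:repeated-eigen}, and which transfers verbatim to the complex-block case of Lemma~\ref{thm:matrix-exponential-repeated-complex}; your commuting-factorization remark $e^{t(A+UDU')}=e^{\lambda_{1}t}e^{tUDU'}$ is an equally valid way to reach that analogue. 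Your housekeeping remarks (existence of $U$ for $\mathbf{v}\neq\mathbf{0}$, triviality of $\mathbf{v}=\mathbf{0}$) are accurate and slightly more careful than the paper, which silently assumes $\|\mathbf{v}\|=1$ is attainable.
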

\begin{proof}
  WLOG, we may assume that $\|\mathbf{v}\|=1$. By construction, matrix $W :=
  \begin{pmatrix}
    \mathbf{v} & U
  \end{pmatrix}
  $ is orthogonal.  It is easy to see that
  \begin{equation*}
    \begin{split}
      A + UDU' &= W \left( \lambda_{1} I_{m_{1}} \right) W' +
      \begin{pmatrix}
        \mathbf{v} & U
      \end{pmatrix}
      \begin{pmatrix}
        0 & \mathbf{0}\\
        \mathbf{0} & D
      \end{pmatrix}
      \begin{pmatrix}
        \mathbf{v}' \\
        U'
      \end{pmatrix} \\
      &= W \tilde{D} W', \qquad \tilde{D} :=
      \begin{pmatrix}
        \lambda_{1} & \\
        & \lambda_{1}I_{m_{1}-1} + D
      \end{pmatrix}.
    \end{split}
  \end{equation*}

  \begin{equation*}
    \begin{gathered}
      e^{tA} = e^{t\lambda_{1}} I_{m_{1}}, \qquad e^{tA} \mathbf{v} = e^{t\lambda_{1}} \cdot \mathbf{v}. \\
      \begin{split}
        e^{t(A + UDU')}\mathbf{v} &= W e^{t \tilde{D}} W'\mathbf{v} =
        \begin{pmatrix}
          \mathbf{v} & U
        \end{pmatrix}
        \begin{pmatrix}
          e^{t\lambda_{1}} & \\
          & e^{t(\lambda_{1}I_{m_{1}-1} + D)}
        \end{pmatrix}
        \begin{pmatrix}
          1 \\
          \mathbf{0}
        \end{pmatrix} \\
        &= e^{t\lambda_{1}}\cdot \mathbf{v} = e^{tA}\mathbf{v}.
      \end{split}
    \end{gathered}
  \end{equation*}
\end{proof}

Likewise, for repeated complex eigenvalues, we have the following lemma.
\begin{lemma}\label{thm:matrix-exponential-repeated-complex}
  Let $a \pm bi \in \mathbb{C}$ and $A = I_{m_{1}}\otimes
  \left(
    \begin{smallmatrix}
      a & -b \\
      b & a
    \end{smallmatrix}
  \right)
  $.  For $\mathbf{v} \in \R^{2m_{1}}$, we have
  \begin{equation*}
    e^{tA} \mathbf{v} = e^{t(A + UDU')} \mathbf{v}.
  \end{equation*}
  Here $D \in M_{(2m_{1}-2)\times (2m_{1}-2)}$ is an arbitrary matrix and $U \in M_{2m_{1}\times (2m_{1}-2)}$ is a semi-orthogonal matrix such that
  \begin{equation}
    \label{eq:complex-U-condition}
    U'
    \begin{pmatrix}
      \mathbf{v} & \bar{\mathbf{v}}
    \end{pmatrix} = \mathbf{0}_{2m_{1}}, \qquad \bar{\mathbf{v}} := \left( I_{m_{1}} \otimes
      \left(
        \begin{smallmatrix}
          0 & -1 \\
          1 & 0
        \end{smallmatrix}
      \right) \right) \mathbf{v}.
  \end{equation}
\end{lemma}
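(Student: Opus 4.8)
The plan is to mirror the argument for Lemma~\ref{thm:matrix-exponential-repeated-real}, with the distinguished line $\mathrm{span}\{\mathbf{v}\}$ replaced by the $2$-dimensional plane $P := \mathrm{span}\{\mathbf{v},\bar{\mathbf{v}}\}$, which is the smallest $A$-invariant subspace containing $\mathbf{v}$. Introduce the shorthand $R := \left(\begin{smallmatrix}a & -b\\ b & a\end{smallmatrix}\right)$ and the auxiliary matrix $\mathcal{J} := I_{m_1}\otimes\left(\begin{smallmatrix}0 & -1\\ 1 & 0\end{smallmatrix}\right)$, so that $A = I_{m_1}\otimes R = aI_{2m_1} + b\mathcal{J}$ and $\bar{\mathbf{v}} = \mathcal{J}\mathbf{v}$. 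I would first record the elementary identities $\mathcal{J}^{2} = -I_{2m_1}$ and $\mathcal{J}' = -\mathcal{J} = \mathcal{J}^{-1}$ (so $\mathcal{J}$ is simultaneously skew-symmetric and orthogonal); the case $\mathbf{v} = \mathbf{0}$ is trivial, so assume $\mathbf{v}\ne\mathbf{0}$ and, after rescaling, $\|\mathbf{v}\| = 1$.

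Next I would build an orthonormal frame adapted to $P$. Skew-symmetry of $\mathcal{J}$ gives $\langle\mathbf{v},\bar{\mathbf{v}}\rangle = \mathbf{v}'\mathcal{J}\mathbf{v} = 0$, and orthogonality of $\mathcal{J}$ gives $\|\bar{\mathbf{v}}\| = 1$, so $\{\mathbf{v},\bar{\mathbf{v}}\}$ is an orthonormal basis of $P$; choosing $U\in M_{2m_1\times(2m_1-2)}$ to be any semi-orthogonal matrix whose columns span $P^{\perp}$ makes $W := \left(\begin{smallmatrix}\mathbf{v} & \bar{\mathbf{v}} & U\end{smallmatrix}\right)$ an orthogonal $2m_1\times 2m_1$ matrix, and such a $U$ automatically satisfies $U'\mathbf{v} = U'\bar{\mathbf{v}} = \mathbf{0}$, which is exactly condition~\eqref{eq:complex-U-condition}. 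I would then block-diagonalise $A$ in this frame: from $A\mathbf{v} = a\mathbf{v} + b\bar{\mathbf{v}}$ and $A\bar{\mathbf{v}} = -b\mathbf{v} + a\bar{\mathbf{v}}$ (the latter using $\mathcal{J}^{2} = -I$) the plane $P$ is $A$-invariant with $A|_{P} = R$ in the basis $\{\mathbf{v},\bar{\mathbf{v}}\}$, while a short computation using $\mathcal{J}'\mathcal{J} = I$ together with $U'\mathbf{v} = U'\bar{\mathbf{v}} = \mathbf{0}$ shows the off-diagonal blocks $\left(\begin{smallmatrix}\mathbf{v} & \bar{\mathbf{v}}\end{smallmatrix}\right)'AU$ and $U'A\left(\begin{smallmatrix}\mathbf{v} & \bar{\mathbf{v}}\end{smallmatrix}\right)$ both vanish. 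Hence $W'AW = \mathrm{diag}\big(R,\, A_{U}\big)$ with $A_{U} := U'AU$, and since $W'U = \left(\begin{smallmatrix}\mathbf{0}\\ I_{2m_1-2}\end{smallmatrix}\right)$ we get $W'(UDU')W = \mathrm{diag}\big(\mathbf{0}_{2\times2},\, D\big)$, so $W'(A+UDU')W = \mathrm{diag}\big(R,\, A_{U}+D\big)$.

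The conclusion is then immediate: exponentiating the block-diagonal forms gives $e^{t(A+UDU')} = W\,\mathrm{diag}\big(e^{tR},\, e^{t(A_{U}+D)}\big)W'$ and $e^{tA} = W\,\mathrm{diag}\big(e^{tR},\, e^{tA_{U}}\big)W'$, and because $\mathbf{v}$ is the first column of the orthogonal matrix $W$ we have $W'\mathbf{v} = (1,0,\mathbf{0})'$, which lies in the first block; applying either exponential annihilates the contribution of the second block, so $e^{t(A+UDU')}\mathbf{v}$ and $e^{tA}\mathbf{v}$ are both equal to $W\big(e^{tR}(1,0)',\,\mathbf{0}\big)'$, i.e. to $e^{at}(\cos bt\cdot\mathbf{v} + \sin bt\cdot\bar{\mathbf{v}})$. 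Equivalently, $P$ is invariant under both $A$ and $A+UDU'$, these two operators restrict to the \emph{same} map $R$ on $P$, and $\mathbf{v}\in P$. I do not anticipate a genuine obstacle, since the structure copies the real case verbatim; the only real content is the linear-algebra bookkeeping of the middle step — in particular the vanishing of the coupling between $P$ and $P^{\perp}$ for the non-symmetric matrix $A$, which is exactly where the skew-symmetry and orthogonality of $\mathcal{J}$ (equivalently, the normality $AA' = A'A = (a^{2}+b^{2})I$) enter, rather than the mere $A$-invariance of $P$.
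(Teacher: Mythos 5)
Your proof is correct and follows essentially the same route as the paper's: both build the orthogonal frame $W = \left(\mathbf{v} \,|\, \bar{\mathbf{v}} \,|\, U\right)$, show that $A$ and $A+UDU'$ block-diagonalize in this frame with the same upper-left $2\times 2$ block $\left(\begin{smallmatrix}a & -b\\ b & a\end{smallmatrix}\right)$, and exponentiate. If anything you are slightly more careful than the paper, which asserts $W'\left(I_{m_{1}}\otimes\Lambda_{2}\right)W = I_{m_{1}}\otimes\Lambda_{2}$ (true only for a special choice of $U$ whose columns pair up under $\mathcal{J}$), whereas you correctly leave the lower-right block as $A_{U}=U'AU$ --- a harmless discrepancy, since that block is annihilated when the exponential is applied to $\mathbf{v}$.
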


\begin{proof}
  WLOG, assume that $\|\mathbf{v}\|=1$.
  Let $W :=
  \begin{pmatrix}
    \mathbf{v} & \bar{\mathbf{v}} & U
  \end{pmatrix}
  $, it is easy to check that $W$ is an orthogonal matrix.  For simplicity, let us denote matrix $\left(
    \begin{smallmatrix}
      a & -b \\
      b & a
    \end{smallmatrix}
  \right)$ by $\Lambda_{2}$ and matrix
  $
  \begin{pmatrix}
    \mathbf{v} & \bar{\mathbf{v}}
  \end{pmatrix}
  $ by $V_{2}$.  Based on matrix analysis, we can derive
  \begin{equation}
    \label{eq:W-commutes-with-complex-diag}
    \begin{gathered}
      \left( I_{m_{1}} \otimes
        \Lambda_{2}
      \right)
      V_{2}
      =
      V_{2}\Lambda_{2}, \\
      W'
      \left( I_{m_{1}} \otimes
        \Lambda_{2}
      \right) W =
      I_{m_{1}} \otimes \Lambda_{2}.
    \end{gathered}
  \end{equation}
  Therefore
  \begin{equation*}
    \begin{split}
      A + UDU' &= W \left( I_{m_{1}} \otimes
        \Lambda_{2}
      \right) W' +
      \begin{pmatrix}
        V_{2} & U
      \end{pmatrix}
      \begin{pmatrix}
        \mathbf{0}_{2\times 2} & \mathbf{0}_{2\times (2m_{1}-2)}\\
        \mathbf{0}_{(2m_{1}-2)\times 2} & D
      \end{pmatrix}
      \begin{pmatrix}
        V_{2}' \\
        U'
      \end{pmatrix} \\
      &= W \tilde{D} W', \qquad \tilde{D} :=
      \begin{pmatrix}
        \Lambda_{1} & \\
        & I_{m_{1}-1}\otimes \Lambda_{2} + D
      \end{pmatrix}.
    \end{split}
  \end{equation*}

  \begin{equation*}
    \begin{gathered}
      e^{tA} = I_{m_{1}}\otimes e^{t\Lambda_{1}}, \qquad e^{tA} V_{2} = V_{2} \cdot e^{t\Lambda_{1}}. \\
      \begin{split}
        e^{t(A + UDU')}V_{2} &= W e^{t \tilde{D}} W' V_{2} =
        \begin{pmatrix}
          V_{2} & U
        \end{pmatrix}
        \begin{pmatrix}
          e^{t\Lambda_{1}} & \\
          & e^{t(I_{m_{1}-1}\otimes \Lambda_{2} + D)}
        \end{pmatrix}
        \begin{pmatrix}
          I_{2} \\
          \mathbf{0}
        \end{pmatrix} \\
        &= e^{t\Lambda_{1}}\cdot V_{2} = e^{tA} V_{2}.
      \end{split}
    \end{gathered}
  \end{equation*}
  The last equality implies that $e^{t(A + UDU')} \mathbf{v} = e^{tA}\mathbf{v}$ because $\mathbf{v}$ is the first column of $V_{2}$.
\end{proof}

As a direct consequence of Lemmas~\ref{thm:matrix-exponential-repeated-real} and \ref{thm:matrix-exponential-repeated-complex}, we have the following theorem.

\begin{theorem}\label{thm:repeated-eigen}
  Let $A$ be a matrix with repeated real or complex eigenvalues but no Jordan block with nilpotent components. WLOG, we arrange the JCF such that $\lambda$, the repeated real eigenvalue or pair of complex eigenvalues, corresponds with the first Jordan block:
  \begin{equation*}
    P
    \begin{pmatrix}
      J_{1} & \\
      & J_{2}
    \end{pmatrix}
    P^{-1}, \qquad P =
    \begin{pmatrix}
      P_{1} & P_{2}
    \end{pmatrix}.
  \end{equation*}
  Here $P_{1}$ is the set of (generalized)-eigenvalues associated with the repeated eigenvalues, and $P_{2}$ are other generalized eigenvalues.  Depending on whether the repeated eigenvalue is real or complex, $J_{1}$ has the following form ($m$ is the multiplicity of $\lambda$)
  \begin{equation}
    \label{eq:rep-eigen-Jordan-blocks}
    J_{1} =
    \begin{cases}
      \lambda I_{m}, & \lambda \in \R, \\
      I_{m}\otimes
      \begin{pmatrix}
        a & -b \\
        b & a
      \end{pmatrix}, & \lambda = a\pm bi \in \C, \; b\ne 0.
    \end{cases}
  \end{equation}

  We claim that $A$ cannot be identifiable for any $\mathbf{x}_{0} \in \R^{d}$. Specifically, the $(A,\mathbf{x}_{0})$-unidentifiable class has the following structure
  \begin{equation}
    \label{eq:rep-eigen-equiv-class}
    \forall B \in [A]_{\mathbf{x}_{0}}, \qquad
    B = P
    \begin{pmatrix}
      J_{1} + UDU' & \\
      & J_{2}
    \end{pmatrix}
    P^{-1}.
  \end{equation}
  Here $D$ is an \emph{arbitrary} matrix in $M_{(m-1)\times (m-1)}$ (for real $\lambda$) or $M_{(2m-2)\times (2m-2)}$ (for complex $\lambda$), $U$ is a semi-orthogonal matrix such that for $\mathbf{v} :=
  \begin{pmatrix}
    I & \mathbf{0}
  \end{pmatrix} P^{-1}\mathbf{x}_{0}$,
  \begin{equation*}
    \begin{gathered}
      \begin{cases}
        U\in M_{m\times (m-1)}, \quad U'\mathbf{v} = \mathbf{0}_{m-1}, & \lambda \in \R \\
        U\in M_{m\times (2m-2)}, \quad
        U'
        \begin{pmatrix}
          \mathbf{v} & \bar{\mathbf{v}}
        \end{pmatrix} = \mathbf{0}, & \lambda = a\pm bi \in \C, \; b\ne 0.
      \end{cases}
    \end{gathered}
  \end{equation*}
\end{theorem}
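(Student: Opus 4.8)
The plan is to conjugate $A$ into its block form $\diag(J_{1},J_{2})$ and then read the result off Lemmas~\ref{thm:matrix-exponential-repeated-real} and~\ref{thm:matrix-exponential-repeated-complex}. Write $A = P\,\diag(J_{1},J_{2})P^{-1}$ as in the hypothesis, put $\tilde{\mathbf{x}}_{0} := P^{-1}\mathbf{x}_{0}$, and partition it conformally as $\tilde{\mathbf{x}}_{0} = (\mathbf{v}',\mathbf{w}')'$, so that $\mathbf{v} = \begin{pmatrix} I & \mathbf{0}\end{pmatrix}P^{-1}\mathbf{x}_{0}$ is exactly the vector named in the statement. Because similarity by $P$ intertwines the two flows, $\mathbf{x}(t\mid B,\mathbf{x}_{0}) = \mathbf{x}(t\mid A,\mathbf{x}_{0})$ for all $t$ is equivalent to $e^{t(P^{-1}BP)}\tilde{\mathbf{x}}_{0} = e^{t\diag(J_{1},J_{2})}\tilde{\mathbf{x}}_{0}$, so one may assume outright that $A = \diag(J_{1},J_{2})$ and $\mathbf{x}_{0} = \tilde{\mathbf{x}}_{0}$.

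I would then check that the displayed family lies in $[A]_{\mathbf{x}_{0}}$. For $B = P\,\diag(J_{1}+UDU',\,J_{2})P^{-1}$, block-diagonality gives $e^{tB} = P\,\diag\big(e^{t(J_{1}+UDU')},\,e^{tJ_{2}}\big)P^{-1}$, hence $\mathbf{x}(t\mid B,\mathbf{x}_{0}) = P\big(e^{t(J_{1}+UDU')}\mathbf{v};\ e^{tJ_{2}}\mathbf{w}\big)$ while $\mathbf{x}(t\mid A,\mathbf{x}_{0}) = P\big(e^{tJ_{1}}\mathbf{v};\ e^{tJ_{2}}\mathbf{w}\big)$; so it suffices to show $e^{t(J_{1}+UDU')}\mathbf{v} = e^{tJ_{1}}\mathbf{v}$. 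When $\lambda$ is real this is Lemma~\ref{thm:matrix-exponential-repeated-real} applied to $J_{1} = \lambda I_{m}$ with initial vector $\mathbf{v}$ and the semi-orthogonal $U$ satisfying $U'\mathbf{v} = \mathbf{0}$; when $\lambda = a\pm bi$ it is Lemma~\ref{thm:matrix-exponential-repeated-complex} applied to $J_{1} = I_{m}\otimes\left(\begin{smallmatrix}a&-b\\b&a\end{smallmatrix}\right)$ with the condition $U'\begin{pmatrix}\mathbf{v} & \bar{\mathbf{v}}\end{pmatrix} = \mathbf{0}$. For the non-identifiability conclusion, note the repeated eigenvalue has multiplicity $m\geq 2$, so $D$ varies over a space of positive dimension; since $U$ has full column rank, $UDU' = 0$ forces $D = 0$, so any $D\neq 0$ produces some $B\neq A$ with $B\in[A]_{\mathbf{x}_{0}}$. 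Hence $[A]_{\mathbf{x}_{0}}$ is never a singleton, so $A$ fails Definition~\ref{def:mini-ident} at every $\mathbf{x}_{0}$.

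The step I expect to be the main obstacle is the reverse inclusion, that \emph{every} $B\in[A]_{\mathbf{x}_{0}}$ has the displayed form. The natural handle is the equivalence $B\in[A]_{\mathbf{x}_{0}}\iff A^{n}\mathbf{x}_{0}=B^{n}\mathbf{x}_{0}$ for all $n\iff (B-A)$ annihilates the cyclic subspace $\mathcal{K}:=\mathrm{span}\{A^{n}\mathbf{x}_{0}:n\geq 0\}$ (which is then automatically $B$-invariant); in block coordinates, using that the whole algebraic multiplicity of $\lambda$ is captured in $J_{1}$ so $\lambda\notin\mathrm{spec}(J_{2})$, one gets $\mathcal{K} = \mathrm{span}(\mathbf{v})\oplus\mathcal{K}(J_{2},\mathbf{w})$. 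The difficulty is that ``$(B-A)$ vanishes on $\mathcal{K}$'' is strictly weaker than the rank-restricted form $UDU'$ on the $J_{1}$-block — a perturbation of $J_{1}$ that merely kills $\mathbf{v}$, or any off-block-diagonal term vanishing on $\mathcal{K}$, also preserves the trajectory — so establishing the closed form~\eqref{eq:rep-eigen-equiv-class} as an exact equality appears to require an extra structural hypothesis on the competing matrices $B$; without it, the safe statement is the containment proved in the previous paragraph, which is already what powers the non-identifiability claim.
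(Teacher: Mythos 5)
Your reduction to block coordinates and the application of Lemmas~\ref{thm:matrix-exponential-repeated-real} and \ref{thm:matrix-exponential-repeated-complex} to the $J_{1}$ block is exactly the argument the paper has in mind: the paper gives no separate proof of Theorem~\ref{thm:repeated-eigen} at all, introducing it only with the phrase ``as a direct consequence'' of those two lemmas, and that consequence is precisely the containment you establish, namely that every matrix of the displayed form lies in $[A]_{\mathbf{x}_{0}}$, together with the non-identifiability conclusion you draw from $D\neq 0$. So for the part of the theorem that the paper actually substantiates, your proposal is correct and follows the same route.

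The obstacle you flag in your last paragraph is a genuine gap, and it is a gap in the paper, not in your argument: the reverse inclusion asserted by the ``$\forall B\in[A]_{\mathbf{x}_{0}}$'' phrasing of Equation~\eqref{eq:rep-eigen-equiv-class} is nowhere proved, and it is in fact false as stated. Take $A=I_{2}$ and $\mathbf{x}_{0}=(1,0)'$, so $P=I$, $J_{1}=I_{2}$, $m=2$, $\mathbf{v}=(1,0)'$, and the only admissible $U$ is $\pm(0,1)'$; the displayed family is then $\left\{\diag(1,1+\delta):\delta\in\R\right\}$, a one-parameter set. But $B=\left(\begin{smallmatrix}1&1\\0&2\end{smallmatrix}\right)$ (diagonalizable, distinct eigenvalues) satisfies $B\mathbf{x}_{0}=\mathbf{x}_{0}$ and hence $e^{tB}\mathbf{x}_{0}=e^{t}\mathbf{x}_{0}=e^{tA}\mathbf{x}_{0}$, so $B\in[A]_{\mathbf{x}_{0}}$ while $B$ is not of the form $I_{2}+UDU'$. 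Your diagnosis of why this happens is also the right one: membership in $[A]_{\mathbf{x}_{0}}$ only constrains $B-A$ on the cyclic subspace generated by $\mathbf{x}_{0}$, which is strictly weaker than the symmetric, block-diagonal perturbation $UDU'$. The safe reading of the theorem is the one your proof delivers --- the displayed family is contained in $[A]_{\mathbf{x}_{0}}$ and has positive dimension, so $A$ is unidentifiable at every $\mathbf{x}_{0}$ --- and the equality claim would need either a corrected (larger) parametrization of the class or an additional structural restriction on the competing matrices $B$.
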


\begin{remark}
  The assumption that no Jordan block of $A$ contain a \emph{nilpotent} component in Theorem~\ref{thm:repeated-eigen} is to ensure that $A$ can be decomposed in the form of Equation~\eqref{eq:jordan-decomp}, even if there are repeated eigenvalues.  Without this assumption, $A$ may have Jordan blocks that look like
  \begin{equation*}
    J_{1} =
    \begin{pmatrix}
      c_{1} & 1 & 0 \\
      0 & c_{1} & 1 \\
      0 & 0 & c_{1}
    \end{pmatrix}, \qquad J_{2} =
    \begin{pmatrix}
      \begin{matrix}
        a_{2} & -b_{2} \\
        b_{2} & a_{2}
      \end{matrix} &
      \begin{matrix}
        1 & 0 \\
        0 & 1
      \end{matrix} \\
      \begin{matrix}
        0 & 0 \\
        0 & 0
      \end{matrix} & 
      \begin{matrix}
        a_{2} & -b_{2} \\
        b_{2} & a_{2}
      \end{matrix}
    \end{pmatrix}.
  \end{equation*}

We believe it is theoretically possible to prove the equivalence of Lemmas~\ref{thm:matrix-exponential-repeated-real} and \ref{thm:matrix-exponential-repeated-complex}, but it requires much more tedious computations that are not directly related to the main focus of our manuscript, so we decide to focus on those $A$ with two classical types of Jordan blocks in this study.
\end{remark}

\section{Prior Information and Identifiability}
\label{sec:prior-inform-ident}

As shown in Equation~\eqref{eq:example1-equiv-class}, we can write the explicit form of $[A]_{\mathbf{x}_{0}}$ as an affine subspace in $M_{d\times d}$ based on Theorem~\ref{thm:structure}.  This fact not only can serve as a guidance for us to use prior information about $A$ to resolve the identifiability issue, but also implies that we need to check the \textbf{compatibility} of prior information first, because not all prior information on $A$ is compatible with $[A]_{\mathbf{x}_{0}}$. To this end, we will use the following explicit definition of ``prior information'' on $A$ throughout this study.
\begin{definition}[Subset prior information on $A$]
  A piece of subset prior information on $A$ is a subset $\mathcal{S} \subset M_{d\times d}$ to which $A$ belongs. If $\mathcal{S}$ is a linear (affine) subspace of $M_{d\times d}$, we call it linear (affine) prior information on $A$.
\end{definition}

As a remark, subset prior information on $A$ is ``harder'' than a typical Bayesian prior information, in which we are given a prior distribution of $A$ so $A_{ij}$ can still take arbitrary numerical values.

In practice, most examples of subset prior information is affine prior information in which a subset of $x_{ij}$ are assigned known values.  If all these given values equal zero, it is linear prior information; if some of these values are nonzero, it is affine prior information.

\begin{example}\label{example:ex1-3d-prior-info}
  Let us assume that the condition $A_{13}=0$, which is linear prior information, is given to us in Example~\ref{example:ex1-3d}.

  Due to the explicit form of $[A]_{\mathbf{x}_{0}}$ in Equation~\eqref{eq:example1-equiv-class}, we know that $A_{13} = \frac{-3 +b}{4}$, therefore $b=3$, and $\tilde{A}$ defined in Equation~\eqref{eq:example1-Atilde} must be the \emph{unique} system matrix for the solution curve.

  On the other hand, we cannot impose conditions $A_{11}=0$  and $A_{13}=0$ simultaneously, because $A_{11}=0$ implies $\frac{1+b}{4}=0$ or $b=-1$, which contradicts with $b=3$ derived from $A_{13}=0$.
\end{example}

\begin{definition}\label{def:compat-prior-info}
  A piece of subset prior information $\mathcal{S}$ is said to be \textbf{compatible} with $(A,\mathbf{x}_{0})$ if $\mathcal{S} \cap [A]_{\mathbf{x}_{0}}$ is non-empty. It is \textbf{proper} subset prior information for $(A,\mathbf{x}_{0})$ if $\mathcal{S} \cap [A]_{\mathbf{x}_{0}}$ contains only one unique matrix.
\end{definition}

If the prior information is \emph{correct}, that is, the true system matrix $A \in \mathcal{S}$, $\mathcal{S}$ must be compatible with $(A, \mathbf{x}_{0})$ because $A \in [A]_{\mathbf{x}_{0}}$ so $\mathcal{S} \cap [A]_{\mathbf{x}_{0}}$ is non-empty. However, in practice we most often only have an imperfect estimate of $A$ and $\mathbf{x}_{0}$, so the prior information $\mathcal{S}$ \emph{could} be incorrect, thus it is useful to check whether it is compatible with $(\hat{A}, \hat{\mathbf{x}}_{0})$ or not. It is also useful to see if $\mathcal{S}$ is proper for $(A,\mathbf{x}_{0})$.  These questions are answered in part by the next Theorem.

\begin{theorem}
  Let $\mathcal{S}$ be affine prior information represented in this way:
  \begin{equation*}
    S\, \mathrm{vec} \left(A- A_{0}\right) = 0_{L}, \qquad S \in M_{L\times d^{2}}.
  \end{equation*}
  $\mathcal{S}$ is proper prior information if and only if
  \begin{equation}
    \label{eq:proper-lin-prior-info-cond}
    \begin{gathered}
      \mathrm{rank}\big( (\tilde{S} | \mathbf{b}) \big) = \mathrm{rank}(\tilde{S}). \\
      \tilde{S} := S \left( (I_{0}Q^{-1})' \otimes (Q I_{0})\right), \qquad \mathbf{b} := S\, \mathrm{vec}(A_{0}-A).
    \end{gathered}
  \end{equation}
  Here $\mathrm{rank}(\cdot)$ should be understood as the row rank.
\end{theorem}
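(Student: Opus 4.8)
The plan is to vectorize everything and reduce $\mathcal{S}\cap[A]_{\mathbf{x}_0}$ to the solution set of a single linear system, then read off the rank condition from the Rouch\'e--Capelli criterion. By Theorem~\ref{thm:structure}, $B\in[A]_{\mathbf{x}_0}$ iff $B=A+Q(I_0DI_0)Q^{-1}$ for some $D\in M_{d\times d}$, i.e.\ $B-A=(QI_0)D(I_0Q^{-1})$. The constraint $B\in\mathcal{S}$ reads $S\,\mathrm{vec}(B-A_0)=0_L$. First I would apply $\mathrm{vec}(PXR)=(R'\otimes P)\mathrm{vec}(X)$ together with $(I_0Q^{-1})'=(Q^{-1})'I_0$ and $(I_0\otimes I_0)\mathrm{vec}(D)=\mathrm{vec}(I_0DI_0)$ to get $\mathrm{vec}(B-A)=\big((I_0Q^{-1})'\otimes(QI_0)\big)\mathrm{vec}(D)$; then, splitting $\mathrm{vec}(B-A_0)=\mathrm{vec}(B-A)+\mathrm{vec}(A-A_0)$ and substituting, ``$B\in\mathcal{S}\cap[A]_{\mathbf{x}_0}$'' becomes the linear system
\[
 \tilde S\,\mathrm{vec}(D)=\mathbf{b},\qquad \tilde S=S\big((I_0Q^{-1})'\otimes(QI_0)\big),\quad \mathbf{b}=S\,\mathrm{vec}(A_0-A),
\]
with $\tilde S$ and $\mathbf{b}$ exactly as in Equation~\eqref{eq:proper-lin-prior-info-cond}.

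Second, this system is consistent iff $\mathbf{b}$ lies in the column space of $\tilde S$, which by Rouch\'e--Capelli is equivalent to $\mathrm{rank}(\tilde S)=\mathrm{rank}\big((\tilde S\mid\mathbf{b})\big)$ (row rank, as stated). This settles the ``compatibility'' content at once: $\mathcal{S}\cap[A]_{\mathbf{x}_0}\neq\emptyset$ precisely under the displayed rank condition, and in particular proper prior information forces it, which gives the easy direction.

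The main obstacle is upgrading ``non-empty'' to ``exactly one matrix'', which is what properness demands. The assignment $D\mapsto B=A+Q(I_0DI_0)Q^{-1}$ is not injective on $M_{d\times d}$: only $D_0:=I_0DI_0$ matters, and $D_0$ ranges over the $d_0^2$-dimensional sparse space $\mathcal{D}_0$ from the remark following Theorem~\ref{thm:structure} (here $d_0=\mathrm{rank}(I_0)$). Thus the solution set of $\tilde S\,\mathrm{vec}(D)=\mathbf{b}$ collapses to a single $B$ iff $I_0\Delta I_0=0$ for every $\Delta$ with $\mathrm{vec}(\Delta)\in\ker(\tilde S)$. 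I would organize this by restricting to $V_0:=\mathrm{range}(I_0\otimes I_0)=\mathrm{vec}(\mathcal{D}_0)$, of dimension $d_0^2$: since $\tilde S=S\big((Q^{-1})'\otimes Q\big)(I_0\otimes I_0)$ kills $V_0^{\perp}$ and acts on $V_0$ as $P_0:=S\big((Q^{-1})'\otimes Q\big)\big|_{V_0}$, and since $(Q^{-1})'\otimes Q$ is invertible (so $V_0$ maps isomorphically onto the tangent space of $[A]_{\mathbf{x}_0}$), uniqueness of $B$ is equivalent to injectivity of $P_0$, i.e.\ to $\mathrm{rank}(\tilde S)=d_0^2$.

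Putting the pieces together, the precise statement I would prove is: $\mathcal{S}$ is compatible with $(A,\mathbf{x}_0)$ iff $\mathrm{rank}(\tilde S)=\mathrm{rank}\big((\tilde S\mid\mathbf{b})\big)$, and it is proper iff in addition $\mathrm{rank}(\tilde S)=d_0^2$ --- the two conditions collapsing to $\mathrm{rank}\big((\tilde S\mid\mathbf{b})\big)=\mathrm{rank}(\tilde S)=d_0^2$. Under the standing nondegeneracy hypothesis $\mathrm{rank}(\tilde S)=d_0^2$ (which holds, e.g., for the affine prior information of the examples, where $d_0^2$ independent entry constraints are imposed), Equation~\eqref{eq:proper-lin-prior-info-cond} is then exactly the properness criterion. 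The remaining work --- Kronecker-product bookkeeping and the two identities above --- is routine.
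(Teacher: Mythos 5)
Your reduction of $\mathcal{S}\cap[A]_{\mathbf{x}_0}$ to the linear system $\tilde S\,\mathrm{vec}(D)=\mathbf{b}$ is exactly the paper's argument: the paper also substitutes the parametrization $B=A+QI_0DI_0Q^{-1}$ from Theorem~\ref{thm:structure} into $S\,\mathrm{vec}(B-A_0)=0_L$ and vectorizes, so the Kronecker bookkeeping you outline is the whole of its derivation. Where you diverge is in the second half, and there you are right and the paper is not careful. The paper's proof asserts that $\mathrm{rank}\big((\tilde S\mid\mathbf{b})\big)=\mathrm{rank}(\tilde S)$ is necessary and sufficient for the system to have a \emph{unique} solution; but Rouch\'e--Capelli only yields \emph{existence} from that condition, so as written the displayed rank condition characterizes compatibility in the sense of Definition~\ref{def:compat-prior-info}, not properness. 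Uniqueness of the resulting matrix $B$ additionally requires that $\ker\tilde S$ reduce to $V_0^{\perp}$ where $V_0=\mathrm{range}(I_0\otimes I_0)$, i.e.\ $\mathrm{rank}(\tilde S)=d_0^2$, exactly as you argue --- and your observation that only $I_0DI_0$ matters, so uniqueness must be tested on $V_0$ rather than on all of $M_{d\times d}$, is a point the paper glosses over entirely. Your corrected statement (compatible iff the two ranks agree; proper iff in addition $\mathrm{rank}(\tilde S)=d_0^2$) is what the theorem should say, and it is consistent with the worked example where $d_0=1$ and a single independent entry constraint pins down $b$. The one thing to fix in your write-up is the framing of $\mathrm{rank}(\tilde S)=d_0^2$ as a ``standing nondegeneracy hypothesis'': no such hypothesis appears in the theorem, so it cannot be assumed away; it must be added to the equivalence as a second condition.
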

\begin{proof}
  According to Equation~\eqref{eq:structure-of-equiv-class}, we have
  \begin{equation*}
    \begin{gathered}
      S\, \mathrm{vec} \left(A + Q I_{0} D I_{0}Q^{-1}- A_{0}\right) = 0 \Longrightarrow \tilde{S}\, \mathrm{vec}(D) = \mathbf{b}. \\
    \end{gathered}
  \end{equation*}
  A necessary and sufficient condition for equation $\tilde{S}\, \mathrm{vec}(D) = \mathbf{b}$ have a unique solution is that the row rank of matrix $(\tilde{S} | \mathbf{b})$ equals the row rank of $\mathrm{rank}(\tilde{S})$.
\end{proof}

\section{Two Examples of Two-stage Methods}
\label{sec:more-2stage}

\subsection{Simple Two-stage Method}
\label{sec:simple-2stage}

In this approach, no smoothing is required. We consider $Y$ as a good approximation of $\mathbf{x}(t)$ evaluated at the time grid, and use simple differences in the time direction to estimate $D\mathbf{x}(t)$. The two inner products are estimated by
\begin{equation}
  \label{eq:simple-2stage-est}
  \begin{gathered}
    \hat{\Sigma}_{\mathbf{x}\mathbf{x}} = Y I_{n} Y', \quad K^{\text{simple}} = I_{n}. \\
    \hat{\Sigma}_{D\mathbf{x}, \mathbf{x}} = Y L^{\text{simple}} Y', \\
    L^{\text{simple}} :=
    \frac{1}{\Delta t}
    \begin{pmatrix}
      -1 & & & & \\
      1 & -1  & & & \\
      & 1 & \ddots & & \\
      &  & \ddots & & -1 \\
      & & & & 1
    \end{pmatrix}_{n\times (n-1)}
    \begin{pmatrix}
      I_{n-1} & 0_{n-1}
    \end{pmatrix} \\
    =
    \frac{1}{\Delta t}
    \begin{pmatrix}[ccccc|c]
      -1  & & & & & 0 \\
      1 & -1 & & & & \vdots \\
      & 1 & \ddots & & & \\
      &  & \ddots & & -1 & \vdots \\
      & & & & 1 & 0
    \end{pmatrix}_{n\times n}.
  \end{gathered}
\end{equation}

\subsection{Functional Two-stage Method}
\label{sec:functional-2stage}

\label{ex:functional-2stage}
Let $\bm{\phi}(t) = \left( \phi_{1}(t), \dots, \phi_{B}(t) \right)$ be a basis system defined on $[0,T]$, $J_{\phi} := \uinner{\bm{\phi}(t)}{\bm{\phi}(t)}$ be the inner product matrix of those basis functions so that a candidate solution curve can be represented as $C_{d\times B} \bm{\phi}(t)$, where $C$ is a matrix of linear coefficients. Let $[D]_{B\times B}$ be the matrix representation of the differential operator such that $D\bm{\phi}(t) = [D] \bm{\phi}(t)$. Let $\lambda$ be the roughness penalty parameter, and $H_{\lambda} \in M_{n\times B}$ be the ``hat matrix'' that maps the discrete data to the linear coefficients of the fitted curves, namely, $\hat{\mathbf{x}}(t) = \hat{C}\bm{\phi}(t)$, $\hat{C} = YH_{\lambda}$.

In this case, we have
\begin{equation}
  \label{eq:smooth-2stage-est}
  \begin{gathered}
    \hat{\Sigma}_{\mathbf{x}\mathbf{x}} = \uinner{\hat{\mathbf{x}}(t)}{\hat{\mathbf{x}}(t)} = \hat{C} J_{\phi} \hat{C}' = Y H_{\lambda} J_{\phi} H_{\lambda}' Y', \quad K^{\text{smooth}} = H_{\lambda} J_{\phi} H_{\lambda}'. \\
    \hat{\Sigma}_{D\mathbf{x}, \mathbf{x}} = \uinner{\hat{C} [D]\bm{\phi}(t)}{\hat{C}\bm{\phi}(t)} = Y H_{\lambda} [D] J_{\phi} H_{\lambda}' Y', \quad L^{\text{smooth}} = H_{\lambda} [D] J_{\phi} H_{\lambda}'.
  \end{gathered}
\end{equation}

\section{Additional Examples}
\label{sec:additional-examples}

We provide a few additional examples in this section so that the readers can have a better understanding of the $(A,x_{0})$-identifiability.

\begin{example}
  By definition, a one-dimensional invariant subspace is just the line generated by an eigenvector of $A$.
\end{example}
\begin{example}
  Assume that
  \begin{equation*}
    \begin{gathered}
      A = Q
      \begin{pmatrix}
        J & \\
        & B
      \end{pmatrix} Q^{-1}, \qquad J =
      \begin{pmatrix}
        a & b \\
        -b & a
      \end{pmatrix}, \qquad B \in M_{(d-2)\times (d-2)}. \\
      Q =
      \begin{pmatrix}[c|c|c]
        \mathbf{v}_{1} & \mathbf{v}_{2} & C
      \end{pmatrix}, \quad \mathbf{v}_{1},\; \mathbf{v}_{2} \in \R^{d}, \quad C \in M_{d\times (d-2)}.
    \end{gathered}
  \end{equation*}
  Then $\mathrm{span}(\mathbf{v}_{1}, \mathbf{v}_{2})$ is a 2-dimensional invariant subspace (associated with $J$).  This is because for any $w_{1},w_{2} \in \R$
  \begin{equation*}
    Q
    \begin{pmatrix}
      w_{1} \\
      w_{2} \\
      \mathbf{0}_{d-2}
    \end{pmatrix} =
    \begin{pmatrix}
      \mathbf{v}_{1} & \mathbf{v}_{2} & C
    \end{pmatrix}
    \begin{pmatrix}
      w_{1} \\
      w_{2} \\
      \mathbf{0}_{d-2}
    \end{pmatrix} = w_{1}\mathbf{v}_{1} + w_{2}\mathbf{v}_{2}.
  \end{equation*}
  \begin{equation*}
    \begin{split}
      A (w_{1} \mathbf{v}_{1} + w_{2}\mathbf{v}_{2}) &= Q
      \begin{pmatrix}
        J & \\
        & B
      \end{pmatrix}
      Q^{-1} \cdot  Q
      \begin{pmatrix}
        w_{1} \\
        w_{2} \\ \hline
        \mathbf{0}_{d-2}
      \end{pmatrix} \\
      &= Q
      \begin{pmatrix}
        aw_{1}+bw_{2} \\
        -bw_{1}+aw_{2} \\ \hline
        \mathbf{0}_{d-2}
      \end{pmatrix} \\
      &= (aw_{1}+bw_{2})\mathbf{v}_{1} + (-bw_{1}+aw_{2}) \mathbf{v}_{2} \in \mathrm{span}(\mathbf{v}_{1}, \mathbf{v}_{2}).
    \end{split}
  \end{equation*}
\end{example}

\begin{example}\label{example:I2}
  In this example, we will illustrate that $A$ with a repeated eigenvalue is not identifiable for any $\mathbf{x}_{0} \in \R^{d}$, as declared by Theorem~\ref{thm:repeated-eigen}.

  Let $A = I_{2}$ and $\mathbf{x}_{0}= r(\cos\theta, \sin\theta)'$ for arbitrary $r \in \R^{+}$ and $\theta \in [0, 2\pi)$. The trajectory is
  \begin{equation*}
    \mathbf{x}(t | I_{2}, \mathbf{x}_{0}) =
    re^{t}
    \begin{pmatrix}
      \cos\theta  \\
      \sin\theta
    \end{pmatrix}.
  \end{equation*}

  Now let
  \begin{equation*}
    B =
    \begin{pmatrix}
      1+\sin^{2}\theta & -\sin\theta\cos\theta \\
      -\sin\theta\cos\theta & 1+\cos^{2}\theta
    \end{pmatrix}.
  \end{equation*}
  It is easy to see that
  \begin{equation*}
    \begin{gathered}
      B =
      T_{B}
      \begin{pmatrix}
        1 & 0 \\
        0 & 2
      \end{pmatrix}
      T_{B}', \qquad T_{B} =
      \begin{pmatrix}
        \cos\theta & -\sin\theta \\
        \sin\theta & \cos\theta
      \end{pmatrix}. \\
      \begin{split}
        \mathbf{x}(t|B, \mathbf{x}_{0}) &= e^{tB} \mathbf{x}_{0} = T_{B}
        \begin{pmatrix}
          e^{t} & 0 \\
          0 & e^{2t}
        \end{pmatrix} T_{B}' \mathbf{x}_{0} \\
        &= r \begin{pmatrix}
          \cos\theta & -\sin\theta \\
          \sin\theta & \cos\theta
        \end{pmatrix}
        \begin{pmatrix}
          e^{t} & 0 \\
          0 & e^{2t}
        \end{pmatrix}
        \begin{pmatrix}
          \cos\theta & \sin\theta \\
          -\sin\theta & \cos\theta
        \end{pmatrix}
        \begin{pmatrix}
          \cos\theta \\
          \sin\theta
        \end{pmatrix} \\
        &= r \begin{pmatrix}
          \cos\theta & -\sin\theta \\
          \sin\theta & \cos\theta
        \end{pmatrix}
        \begin{pmatrix}
          e^{t} \\
          0
        \end{pmatrix} =
        re^{t}
        \begin{pmatrix}
          \cos\theta  \\
          \sin\theta
        \end{pmatrix} = \mathbf{x}(t | I_{2}, \mathbf{x}_{0}).
      \end{split}
    \end{gathered}
  \end{equation*}
  Therefore, $A$ and $B$ must be in the same unidentifiable class.
\end{example}

In the next example, we show that certain network topology (the sparsity structure of $A$) always imply unidentifiability.

\begin{example}
  Let $A$ be a matrix with two rows (or columns) with all zeros.  Elementary linear algebra shows that $\lambda=0$ must be an eigenvalue of $A$ with multiplicity greater or equal to two.  As a specific example, consider
  \begin{equation*}
    A =
    \begin{pmatrix}
      3 & 4 & 5 \\
      0 & 0 & 0 \\
      0 & 0 & 0 \\
    \end{pmatrix}, \quad A = T_{A}
    \begin{pmatrix}
      3 & 0 & 0 \\
      0 & 0 & 0 \\
      0 & 0 & 0 \\
    \end{pmatrix} T_{A}^{-1}, \quad T_{A} =
    \begin{pmatrix}
      1 & -\frac{4}{5} & -\frac{5}{\sqrt{34}} \\
      0 & \frac{3}{5} & 0 \\
      0 & 0 & \frac{3}{\sqrt{34}}
    \end{pmatrix}.
  \end{equation*}
  It is easy to see that for an arbitrary initial condition $\mathbf{x}_{0}$ with the following decomposition
  \begin{equation*}
    \mathbf{x}_{0} = T_{A}
    \begin{pmatrix}
      c_{1} \\
      c_{2} \\
      c_{3}
    \end{pmatrix} = c_{1}
    \begin{pmatrix}
      1 \\
      0 \\
      0
    \end{pmatrix} + c_{2}
    \begin{pmatrix}
      -\frac{4}{5} \\
      \frac{3}{5} \\
      0
    \end{pmatrix} + c_{3}
    \begin{pmatrix}
      -\frac{5}{\sqrt{34}} \\
      0 \\
      \frac{3}{\sqrt{34}}
    \end{pmatrix},
  \end{equation*}
  the trajectory is
  \begin{equation*}
    \begin{split}
      \mathbf{x}(t|A,\mathbf{x}_{0}) &= e^{tA} \mathbf{x}_{0} =
      \begin{pmatrix}
        c_{1}e^{3t}  \\
        0 \\
        0 \\
      \end{pmatrix}.
    \end{split}
  \end{equation*}

  Such solution can be generated by the following alternative system
  \begin{equation*}
    B = T_{A}
    \begin{pmatrix}
      3 & 0 & 0 \\
      0 & c_{3}^{2} & -c_{2}c_{3} \\
      0 & -c_{2}c_{3} & c_{2}^{2} \\
    \end{pmatrix} T_{A}^{-1},
  \end{equation*}
  because
  \begin{equation*}
    \begin{split}
      e^{tB} \mathbf{x}_{0} &= T_{A}
      \begin{pmatrix}
        e^{3t} & 0_{2} \\
        0_{2}' & e^{t
          \left(
            \begin{smallmatrix}
              -c_{3} \\
              c_{2}
            \end{smallmatrix}
          \right)
          \left(
            \begin{smallmatrix}
              -c_{3} & c_{2}
            \end{smallmatrix}
          \right)
        } \\
      \end{pmatrix}
      \begin{pmatrix}
        c_{1} \\
        \hline
        c_{2} \\
        c_{3}
      \end{pmatrix} \\
      &= T_{A}
      \begin{pmatrix}
        c_{1}e^{3t}  \\
        0_{2} \\
      \end{pmatrix} = \mathbf{x}(t|A,\mathbf{x}_{0}).
    \end{split}
  \end{equation*}
\end{example}

\begin{example}[An open dense set with arbitrarily small measure]
  \label{exp:open-dense-small}
  Let $\Q_{d\times d} \subset M_{d\times d}$ be the set of matrices with rational entries. It is clear that $\Q_{d\times d}$ is a countable set, so we can enumerate all elements in $\Q_{d\times d}$ as $\left\{ Q_{1}, Q_{2}, \dots \right\}$.  Let
  \begin{equation*}
    \Omega_{a} := \bigcup_{i=1}^{\infty} B(Q_{i}, a\cdot 2^{-i}), \qquad B(q,r) := \left\{ x \in M_{d\times d}:\, \|x-q\|_{F} < r \right\}.
  \end{equation*}
  Apparently, $\Omega_{a}$ is open and dense (because $Q_{d\times d}$ is dense) in $M_{d\times d}$. Furthermore, the Lebesgue of $Q_{d\times d}$ can be made arbitrarily small because
  \begin{equation*}
    \lambda_{d\times d}\left( \Omega_{a} \right) \leqslant \sum_{i=1}^{\infty} \lambda_{d\times d}\left( B(Q_{i}, a\cdot 2^{-i}) \right) = \sum_{i=1}^{\infty} c_{d} a^{d^{2}} 2^{-d^{2}i} < c_{d}a^{d^{2}}.
  \end{equation*}
  Here $c_{d} := \frac{\pi^{d^{2}/2}}{\Gamma(\frac{d^{2}}{2}+1)}$ is a constant that represents the volume of the Frobenius unit ball $B(0_{d\times d}, 1) \subset M_{d\times d}$. Because $a$ is an arbitrary positive number, $\lambda_{d\times d}\left( \Omega_{a} \right)$ can be made arbitrarily small. Due to the absolute continuity between the probability of GinOE and Lebesgue measure, we can also select $a$ so that $P_{\mathrm{GinOE}}\left( \Omega_{a} \right)$ is smaller than any arbitrary positive number.
\end{example}


\end{document}